\newtheorem{thm}{Theorem}[section]
\newtheorem{lem}[thm]{Lemma}
\newtheorem{defi}[thm]{Definition}
\newtheorem{rem}[thm]{Remark}
\newtheorem{prop}[thm]{Proposition}
\newtheorem{cor}[thm]{Corollary}
\newtheorem{ass}[thm]{Assumption}
\newtheorem{setup}[thm]{Set-up}
\newcommand{\Hom}{{\rm Hom}}
\newcommand{\GL}{\mathrm{GL}}
\newcommand{\GSp}{\mathrm{GSp}}
\newcommand{\PGSp}{\mathrm{PGSp}}
\newcommand{\PSp}{\mathrm{PSp}}
\newcommand{\Sp}{\mathrm{Sp}}
\newcommand{\proj}{\mathrm{proj}}
\newcommand{\id}{\mathrm{id}}
\DeclareMathOperator{\Image}{im}
\DeclareMathOperator{\Gal}{Gal}
\DeclareMathOperator{\Aut}{Aut}
\DeclareMathOperator{\Frob}{Frob}
\DeclareMathOperator{\Tr}{Tr}
\newcommand{\Ind}{{\rm Ind}}
\newcommand{\PGL}{\mathrm{PGL}}
\newcommand{\PSL}{\mathrm{PSL}}
\newcommand{\calE}{\mathcal{E}}
\newcommand{\calG}{\mathcal{G}}
\newcommand{\calL}{\mathcal{L}}
\newcommand{\calO}{\mathcal{O}}
\newcommand{\cO}{\mathcal{O}}
\newcommand{\fp}{\mathfrak{p}}
\newcommand{\fq}{\mathfrak{q}}
\newcommand{\fL}{\mathfrak{L}}
\newcommand{\FF}{\mathbb{F}}
\newcommand{\NN}{\mathbb{N}}
\newcommand{\QQ}{\mathbb{Q}}
\newcommand{\ZZ}{\mathbb{Z}}
\newcommand{\Qbar}{\overline{\QQ}}
\newcommand{\Fbar}{\overline{\FF}}
\newcommand{\tr}{\mathrm{tr}}
\DeclareMathOperator{\GLReps}{GL-Reps}
\DeclareMathOperator{\GSpReps}{GSp-Reps}
\begin{document}

\selectlanguage{british}

\title{Compatible systems of symplectic Galois representations and the inverse
Galois problem I.\\ Images of projective representations.}
\author{
Sara Arias-de-Reyna\footnote{Universit\'e du Luxembourg,
Facult\'e des Sciences, de la Technologie et de la Communication,
6, rue Richard Coudenhove-Kalergi,
L-1359 Luxembourg, Luxembourg, sara.ariasdereyna@uni.lu},
Luis Dieulefait\footnote{Departament d'\`Algebra i Geometria,
Facultat de Matem\`atiques,
Universitat de Barcelona,
Gran Via de les Corts Catalanes, 585,
08007 Barcelona, Spain, ldieulefait@ub.edu},
Gabor Wiese\footnote{Universit\'e du Luxembourg,
Facult\'e des Sciences, de la Technologie et de la Communication,
6, rue Richard Coudenhove-Kalergi,
L-1359 Luxembourg, Luxembourg, gabor.wiese@uni.lu}}
\maketitle

\begin{abstract}
This article is the first part of a series of three articles about
compatible systems
of symplectic Galois representations and applications to the inverse
Galois problem.

In this first part, we determine the smallest field over which the
projectivisation
of a given symplectic group representation satisfying some natural
conditions can be defined.
The answer only depends on inner twists.
We apply this to the residual representations of a compatible system of
symplectic Galois representations satisfying some mild hypothesis and obtain
precise information on their projective images for almost all members of
the system,
under the assumption of huge residual images, by which we mean that a
symplectic group
of full dimension over the prime field is contained up to conjugation.
Finally, we obtain an application to the inverse Galois problem.

MSC (2010): 11F80 (Galois representations);
20C25 (Projective representations and multipliers),
12F12 (Inverse Galois theory).
\end{abstract}

\section{Introduction}

This article is the first part of a series of three articles, about compatible systems
of symplectic Galois representations with huge images and applications to the
inverse Galois problem.
The overall aim is, for given $n \in \NN$ even and $d \in \NN$,
to construct a compatible system of Galois representations over~$\QQ$ such that
the projective image of the residual representations at a positive density set
of primes~$\ell$ is $\PGSp_n(\FF_{\ell^d})$ or $\PSp_n(\FF_{\ell^d})$,
thus realising these groups as Galois groups over~$\QQ$.

Three tasks emerge:
\begin{enumerate}[(1)]
\item Given a compatible system of Galois representations, determine the fields of
definition of the projective residual representations.
For $n=2$ and the compatible system arising from a Hecke eigenform $f = \sum_{n \ge 1} a_n q^n$
of level~$N$ and Dirichlet character~$\psi$, these fields are known to be equal to
the residue fields of the number field $\QQ(\frac{a_p^2}{\psi(p)} \;|\; p \nmid N)$
at almost all places.

In this Part~I we produce an analog of this number field for an $n$-dimensional
compatible system of Galois representations. We need to make some irreducibility and
regularity assumptions, which are automatically satisfied for compatible systems
coming from modular forms.

\item Having established the minimal fields of definition for projective representations,
the next task is to determine the actual image. In the case $n=2$ a well-known classical
theorem of Dickson classifies the finite subgroups of $\PGL_2(\Fbar_\ell)$ (up to conjugation) into
`huge' ones (i.e.\ $\PGL_2(\FF_{\ell^d})$ or $\PSL_2(\FF_{\ell^d})$ for some $d \in \NN$),
dihedral ones, subgroups of the upper triangular matrices and three exceptional ones;
in representation theoretic language, the non-huge and non-exceptional images come from
induced or reducible representations.
In Part~II \cite{partII} we supply an analog of this result for $n$-dimensional
symplectic representations under the assumption of the existence of a nontrivial transvection
in the image.

As a general strategy, as in \cite{DiWi}, we will enforce local properties
to the compatible system in order to ensure global ones: one local property takes care of
the nontrivial transvection, another one (that of a maximally induced place, see below)
of absolute irreducibility and in the end huge image and the existence of a suitable subfield
of a cyclotomic field in the projective field of definition.

\item The final task is then to construct a compatible system of Galois representations
satisfying the specified local requirements. This is achieved through the construction
of a suitable automorphic representation in Part~III \cite{partIII},
which is joint work with Sug Woo Shin.
\end{enumerate}

The present Part~I is entirely independent of the other two papers. Part~II is independent as well,
except that in the end the results developed in Part~II are combined with Theorem~\ref{thm:IGP}
from the present Part~I in order to prove that the existence of a compatible
system of Galois representations with certain specified local properties implies
the desired application to the inverse Galois problem.
Part~III relies on some results of Part~I and Part~II.

\subsection*{Statement of the results}

We now describe the main results of the present article in slightly
simplied forms. Although we are mainly
interested in symplectic representations (and compatible systems thereof),
we also prove results for general linear representations along the way;
some of the assumptions needed are different in that case, making the statements
longer; but, we think that including the general linear case is worthwhile.
In this overview we will, however, entirely stick to the symplectic case.

Our first result determines projective fields of definition for Galois representations.
\begin{thm}\label{thm:main1}
Let $L/K$ be a finite Galois extension of $\ell$-adic fields (i.e.\ finite extensions of $\QQ_\ell$)
or of finite fields and $\rho: G \to \GSp_n(L)$
be a representation of a group~$G$ such that
its restriction to the subgroup $I_{[\rho]}$ of~$G$
(defined in Proposition~\ref{prop:K-rho}; $G/I_{[\rho]}$ is finite abelian)
is residually absolutely irreducible.

Then there is an extension $K_{[\rho]}$ of~$K$ inside~$L$, which is explicitly described
only in terms of inner twists, such that
$\rho^\proj: G \xrightarrow{\rho} \GSp_n(L) \to \PGSp_n(L)$ is conjugate to a
representation $G \to \PGSp_n(K_{[\rho]})$,
and $K_{[\rho]}$ is the smallest field with this property.
\end{thm}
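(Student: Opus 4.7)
The plan is to take $K_{[\rho]}$ to be the subfield of $L$ fixed by the subgroup $H \subseteq \Gal(L/K)$ of \emph{inner twists}, i.e.\ those $\sigma$ for which there exists a character $\chi_\sigma \colon G \to L^\times$ with $\rho^\sigma \cong \rho \otimes \chi_\sigma$; the subgroup $I_{[\rho]} \subseteq G$ furnished by Proposition~\ref{prop:K-rho} should be understood as the joint kernel of all these $\chi_\sigma$, so that $\rho|_{I_{[\rho]}}$ is literally (not merely projectively) $H$-invariant as an isomorphism class. The theorem then splits into minimality (any descent field $M$ contains $K_{[\rho]}$) and sufficiency (a descent to $K_{[\rho]}$ is actually realised).

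The minimality direction is essentially formal. If $A \in \GL_n(L)$ conjugates $\rho^{\proj}$ into $\PGSp_n(M)$, then for every $\sigma \in \Gal(L/M)$ applying $\sigma$ to the identity $A\rho^{\proj}(g)A^{-1} \in \PGSp_n(M)$ and comparing shows that $\rho^{\proj,\sigma}$ is conjugate to $\rho^{\proj}$, i.e.\ $\sigma$ is an inner twist. Hence $\Gal(L/M) \subseteq H$, which forces $M \supseteq K_{[\rho]}$.

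For sufficiency, I would first upgrade residual absolute irreducibility of $\rho|_{I_{[\rho]}}$ to honest absolute irreducibility (a standard Nakayama-style lift, trivial in the finite-field case). Schur's lemma, applied on $I_{[\rho]}$, then produces for each $\sigma \in H$ a matrix $M_\sigma \in \GL_n(L)$, unique up to scalar, satisfying $M_\sigma \rho(g) M_\sigma^{-1} = \chi_\sigma(g)\rho^\sigma(g)$ for all $g \in G$. A short calculation shows that $\sigma \mapsto [M_\sigma]$ is a $1$-cocycle with values in $\PGL_n(L)$ for the natural $H$-action; trivialising it yields an $A \in \GL_n(L)$ such that $A \rho^{\proj} A^{-1}$ takes values in $\PGL_n(K_{[\rho]})$. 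The symplectic structure comes along automatically, since by absolute irreducibility the $G$-invariant alternating form on the underlying space is unique up to scalar, so the descended projective representation lands in $\PGSp_n(K_{[\rho]})$.

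The main obstacle is the triviality of that $\PGL_n$-valued cocycle. The non-abelian cohomology sequence attached to $1 \to L^\times \to \GL_n(L) \to \PGL_n(L) \to 1$, combined with Hilbert~90 giving $H^1(H, \GL_n(L)) = 0$, reduces the obstruction to a class in $H^2(H, L^\times)$, i.e.\ a relative Brauer class. This class vanishes automatically when $L$ is a finite field, and in the $\ell$-adic case one uses the ambiguity in the characters $\chi_\sigma$ (any two differ by a character of $G/I_{[\rho]}$) to renormalise them into a genuine $1$-cochain whose coboundary cancels the Brauer obstruction; this is where the abelian structure of $G/I_{[\rho]}$, and the fact that the inner twist characters factor through it, are used crucially. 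This compatible normalisation of the $\chi_\sigma$ is the technical heart of the argument.
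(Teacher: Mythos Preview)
Your overall architecture---define $K_{[\rho]}$ as the fixed field of the group $H$ of inner twists, prove minimality formally, then prove sufficiency by a descent argument---is correct, and your minimality paragraph matches the paper's Lemma~\ref{lem:proj}. The gap is in how you propose to kill the Brauer obstruction in the $\ell$-adic case.

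You say that the characters $\chi_\sigma$ can be renormalised because ``any two differ by a character of $G/I_{[\rho]}$''. But the hypothesis that $\rho|_{I_{[\rho]}}$ is absolutely irreducible forces $\rho$ to have no complex multiplication (this is Lemma~\ref{lem:cm}, since $I_{[\rho]} \subseteq H_{[\rho]}$), and once $\calE_{[\rho]} = \{1\}$ the character $\chi_\sigma$ is \emph{unique} for each~$\sigma$. There is no freedom to renormalise, and rescaling the matrices $M_\sigma$ only changes the $2$-cocycle by a coboundary. So your mechanism for trivialising the class in $H^2(H,L^\times) = \mathrm{Br}(L/K_{[\rho]})$ does not work as stated; for $\ell$-adic fields this Brauer group is genuinely nonzero, and you have not used residual irreducibility anywhere after upgrading it to ordinary absolute irreducibility.

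The paper avoids the $\PGL_n$-valued cocycle entirely. It first observes that the traces (and multiplier) of $\rho|_{I_{[\rho]}}$ lie in $K_{[\rho]}$, so by Carayol's theorem (Proposition~\ref{prop:rep-theory}(\ref{prop:rep-theory:d}),(\ref{prop:rep-theory:e}))---which is exactly where residual absolute irreducibility enters, as the device that kills the Brauer obstruction for the \emph{linear} descent---one can conjugate $\rho$ so that $\rho|_{I_{[\rho]}}$ literally takes values in $\GSp_n(K_{[\rho]})$. After this conjugation, Schur's lemma on $I_{[\rho]}$ forces each $M_\sigma$ to be scalar, i.e.\ ${}^\gamma\rho = \rho \otimes \epsilon$ holds \emph{on the nose} for every inner twist $(\gamma,\epsilon)$. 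Then for each fixed $g \in G$ the map $\gamma \mapsto {}^\gamma\rho(g)\rho(g)^{-1}$ is a scalar-valued $1$-cocycle, and ordinary Hilbert~90 for $L^\times$ finishes the job. In short: rather than confronting a nonabelian cocycle and its $H^2$ obstruction, the paper uses residual irreducibility (via Carayol) to rigidify the situation first, reducing everything to the abelian case.
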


This is proved as Theorem~\ref{thm:K-rho} in Section~\ref{sec:inner-group}.
In Section~\ref{sec:compatible} we compute the field $K_{[\overline{\rho}_\lambda]}$
from Theorem~\ref{thm:main1} for almost all residual representations
$\overline{\rho}_\lambda$ in a compatible system of
Galois representations under certain assumptions. The computations are essentially
based on relating the inner twists of the residual representations with
inner twists of the compatible system. Here we present a simplified form
of Theorem~\ref{thm:residual}.

\begin{thm}\label{thm:main2}
Let $\rho_\bullet = (\rho_\lambda)_\lambda$ with $\rho_\lambda: G_\QQ \to \GSp_n(\overline{L}_\lambda)$
(for a Galois number field $L/\QQ$; $\overline{L}_\lambda$ denotes an algebraic closure of
the completion of~$L$ at the place~$\lambda$) be a compatible system as in Definition~\ref{defi:compatible},
consisting of irreducible representations and such that its multiplier is a fixed finite
order character times a fixed power of the cyclotomic character.
Assume a certain condition at places above~$\ell$ (the rational prime below~$\lambda$),
which is for instance satisfied when the $\rho_\lambda$ are de Rham with Hodge-Tate weights
independent of~$\lambda$.

Then there is a number field $K_{\rho_\bullet}$, which is explicitely determined by the inner
twists of the compatible system, such that for all places~$\lambda$ of~$L$ at which $\rho_\lambda$ is residually absolutely irreducible,
except possibly finitely many, the projective
field of definition of the residual representation $\overline{\rho}_\lambda$ is
$\kappa((K_{\rho_\bullet})_\lambda)$, the residue field of $K_{\rho_\bullet}$ at~$\lambda$.
\end{thm}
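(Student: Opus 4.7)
The plan is to apply Theorem~\ref{thm:main1} to each residual representation $\overline{\rho}_\lambda$ individually, so that the projective field of definition is $K_{[\overline{\rho}_\lambda]} = \kappa(\lambda)^{I_{[\overline{\rho}_\lambda]}}$, and then to identify the subgroup $I_{[\overline{\rho}_\lambda]} \le \Gal(\kappa(\lambda)/\FF_\ell)$ with the decomposition subgroup at $\lambda$ of a globally defined subgroup $I_{\rho_\bullet} \le \Gal(L/\QQ)$ constructed from inner twists of the compatible system. Concretely, I would call a pair $(\sigma, \chi_\bullet)$ a \emph{global inner twist} of $\rho_\bullet$ if $\sigma \in \Gal(L/\QQ)$ and $\chi_\bullet = (\chi_\lambda)_\lambda$ is a compatible family of characters of $G_\QQ$ with $\rho_\lambda^\sigma \cong \rho_\lambda \otimes \chi_\lambda$ for almost all~$\lambda$; let $I_{\rho_\bullet}$ collect all such~$\sigma$ and set $K_{\rho_\bullet} := L^{I_{\rho_\bullet}}$.

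For the inclusion $K_{[\overline{\rho}_\lambda]} \subseteq \kappa((K_{\rho_\bullet})_\lambda)$, fix $\sigma \in I_{\rho_\bullet}$ with witness $\chi_\bullet$ and a residually absolutely irreducible place~$\lambda$. Reducing $\rho_\lambda^\sigma \cong \rho_\lambda \otimes \chi_\lambda$ modulo $\lambda$ produces a residual inner twist $(\overline{\sigma}_\lambda, \overline{\chi}_\lambda)$ of $\overline{\rho}_\lambda$, so the decomposition image of $I_{\rho_\bullet}$ at~$\lambda$ sits inside $I_{[\overline{\rho}_\lambda]}$, giving the inclusion at \emph{every} such place by Theorem~\ref{thm:main1}. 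The reverse inclusion is the substantive direction and will be shown at almost all~$\lambda$. Starting from a residual inner twist $(\overline{\sigma}, \overline{\chi})$ of $\overline{\rho}_\lambda$, one takes multipliers to get $\psi \chi_{\mathrm{cyc}}^k \circ \sigma = \psi \chi_{\mathrm{cyc}}^k \cdot \overline{\chi}^{n/2}$ modulo~$\lambda$ (using the hypothesis on the multiplier), which pins down $\overline{\chi}^{n/2}$ as the reduction of a globally predetermined character, hence $\overline{\chi}$ itself up to an $n$-th root of unity --- finitely many choices, each of which is a residual global character.

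The hard part will be promoting this candidate lift to a true global inner twist witnessing $\sigma \in I_{\rho_\bullet}$. The strategy is to build $\chi_\bullet$ as a compatible system by specifying it via its Frobenius values at unramified primes (those Frobenius values are algebraic and forced by the identity $\tr \rho_\lambda^\sigma(\Frob_p) = \chi_\lambda(\Frob_p) \tr \rho_\lambda(\Frob_p)$, holding residually and then lifted), then checking by Chebotarev that $\rho_\mu^\sigma \cong \rho_\mu \otimes \chi_\mu$ holds for every other~$\mu$ by compatibility of Frobenius traces across the system. The local hypothesis at places above~$\ell$ enters here to guarantee that $\chi_\lambda$ is sufficiently well-behaved (e.g.\ Hodge--Tate with prescribed weights forced by the multiplier identity) so that the lift is uniquely determined by its unramified data. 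The main obstacle, and the reason one can only control almost all~$\lambda$, is to exclude `accidental' residual inner twists that do not globalise: for these, either $\overline{\chi}^{n/2}$ fails to lift to a character of the expected ramification, or the $n$-th root of unity ambiguity cannot be resolved globally. A finiteness argument --- the group of inner twists of $\rho_\bullet$ is finite, ramification of any witnessing $\chi_\bullet$ is contained in a fixed finite set of primes, and only finitely many~$\lambda$ can fail the lifting of traces --- bounds these exceptional places uniformly and finishes the proof.
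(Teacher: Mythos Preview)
Your overall architecture is correct and matches the paper's: define global inner twists of the system, get one inclusion at every residually irreducible place by reduction, and establish the reverse inclusion at almost all places by showing that residual inner twists come from a finite global pool. There are, however, two concrete problems.

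First, a computational slip: taking multipliers in $\rho_\lambda^\sigma \cong \rho_\lambda \otimes \chi$ gives $m(\rho_\lambda)^\sigma = m(\rho_\lambda)\,\chi^{2}$, not $\chi^{n/2}$, since $m(\rho\otimes\chi)=\chi^{2}m(\rho)$ for any symplectic $\rho$. So the relation is $\overline{\chi}^{2} = {}^{\sigma}\psi/\psi$, and it is $\overline{\chi}$ up to a root of unity of order dividing $2\cdot\mathrm{ord}(\psi)$ that is pinned down (compare Lemma~\ref{lem:order}(\ref{lem:order:b})).

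Second, and more seriously, you misidentify the role of the local hypothesis at primes above~$\ell$. You invoke it to make the lifted $\chi_\lambda$ ``Hodge--Tate with prescribed weights'', but that is not what it does, and your argument has a gap precisely at the step ``finitely many choices, each of which is a residual global character''. A priori $\overline{\chi}$ is only a character $G_\QQ \to \kappa(\lambda)^\times$; knowing its order is bounded and that it is unramified outside $S\cup S_\ell$ (from Lemma~\ref{lem:unram}) is not enough to place it in a finite set independent of~$\lambda$, because it could ramify wildly at~$\ell$. The actual purpose of the local hypothesis (Assumption~\ref{ass:shape} with bounded exponents) is Proposition~\ref{prop:epsilon-residual}: it forces $\overline{\chi}$ to be \emph{unramified at~$\ell$} as well. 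Once that is established, $\overline{\chi}$ has bounded order and is unramified outside the fixed set~$S$, so it lies in a finite set $\calE_0$ of characters $G_\QQ \to L^\times$ that does not depend on~$\lambda$.

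With that in hand, the paper's endgame is also simpler than your proposed lifting-and-Chebotarev construction of a compatible system $\chi_\bullet$. One works inside the finite group $\calG_0 := \calE_0 \rtimes \Gal(L/\QQ)$: if some $(\gamma,\epsilon)\in\calG_0$ is a residual inner twist at infinitely many~$\lambda$, then the congruences $\gamma(a_\fp) \equiv a_\fp\,\epsilon(\Frob_\fp)\pmod{\lambda}$ hold for infinitely many~$\lambda$ and hence as equalities in~$L$, so $(\gamma,\epsilon)\in\calG_{\rho_\bullet}$. Since $\calG_0$ is finite, discarding the finitely many~$\lambda$ at which some $(\gamma,\epsilon)\in\calG_0\setminus\calG_{\rho_\bullet}$ accidentally works residually gives $\calG_{[\overline{\rho}_\lambda]}=\calG_{[\rho_\lambda]}$ and hence $K_{[\overline{\rho}_\lambda]}=\kappa((K_{\rho_\bullet})_\lambda)$ via Theorem~\ref{thm:compatible}. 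No separate construction of $\chi_\bullet$ as a compatible system is needed.
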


We can then already determine the residual projective images under the assumption that almost all
of them are `huge' (defined in Definition~\ref{defi:huge}).
This is done in Corollary~\ref{cor:residual}, a version of which we state here.

\begin{thm}\label{thm:main3}
Let $\rho_\bullet = (\rho_\lambda)_\lambda$ be as in Theorem~\ref{thm:main2}.
Assume, moreover, that for all but possibly a density zero set of places~$\lambda$ of~$L$
the residual representation $\overline{\rho}_\lambda$ has huge image.

Then for all places $\lambda$ of~$L$ with the possible exception of a density zero set,
the image of $\overline{\rho}_\lambda^\proj$
is $\PGSp_n(\kappa((K_{\rho_\bullet})_\lambda))$ or $\PSp_n(\kappa((K_{\rho_\bullet})_\lambda))$.
\end{thm}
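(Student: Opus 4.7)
The plan is to combine the minimality statement from Theorem~\ref{thm:main2} with the huge-image hypothesis and a group-theoretic classification of overgroups of $\PSp_n(\FF_\ell)$ in $\PGSp_n(\Fbar_\ell)$. Fix a place $\lambda$ outside the union of the two exceptional density-zero sets, write $k_\lambda := \kappa((K_{\rho_\bullet})_\lambda)$ and $m := [k_\lambda : \FF_\ell]$, and set $\overline{H} := \Imag(\overline{\rho}_\lambda^\proj) \subseteq \PGSp_n(\Fbar_\ell)$. Theorem~\ref{thm:main2} asserts that $\overline{H}$ is $\PGSp_n(\Fbar_\ell)$-conjugate into $\PGSp_n(k_\lambda)$, and, moreover, that $k_\lambda$ is the smallest field inside $\Fbar_\ell$ with this property. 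The huge-image hypothesis, on the other hand, says that $\overline{H}$ contains some $\PGSp_n(\Fbar_\ell)$-conjugate of $\PSp_n(\FF_\ell)$. The first step is to consolidate these two statements into a single conjugation. Using that the standard representation of $\Sp_n(\FF_\ell)$ is absolutely irreducible, one sees that two $\PGSp_n(\Fbar_\ell)$-conjugates of $\PSp_n(\FF_\ell)$ that both land in $\PGSp_n(k_\lambda)$ are conjugate by an element normalising $\PGSp_n(k_\lambda)$; this allows me to choose one conjugate of $\overline{H}$ for which simultaneously $\PSp_n(\FF_\ell) \subseteq \overline{H} \subseteq \PGSp_n(k_\lambda)$.

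The core step is then purely group-theoretic: I would appeal to the classification of finite subgroups of $\PGSp_n(\Fbar_\ell)$ containing $\PSp_n(\FF_\ell)$, which, since such an overgroup sits inside $\PGSp_n(\FF_{\ell^m})$, forces it to be of the form $\PSp_n(\FF_{\ell^d})$ or $\PGSp_n(\FF_{\ell^d})$ for some divisor $d$ of $m$. This follows from the Aschbacher--Kleidman--Liebeck description of maximal subgroups of finite classical groups (applied inductively to the chain of overgroups), and I would cite it rather than reprove it. Applied to $H = \overline{H}$ this already gives the correct \emph{shape} of the image.

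The remaining issue is to upgrade $d$ to $m$, and here the minimality clause of Theorem~\ref{thm:main2} (which in turn rests on the inner-twist description in Theorem~\ref{thm:main1}) is what does the work: if $d$ were a proper divisor of $m$ then $\overline{H}$ would be contained in $\PGSp_n(\FF_{\ell^d})$, and hence $\overline{\rho}_\lambda^\proj$ could be realised projectively over $\FF_{\ell^d} \subsetneq k_\lambda$, contradicting the minimality of $k_\lambda$. Therefore $d = m$ and $\overline{H}$ equals either $\PSp_n(k_\lambda)$ or $\PGSp_n(k_\lambda)$, as claimed.

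I expect the main obstacle to be the first, consolidation, step: one has to verify that the ``up to conjugation'' hidden in the huge-image hypothesis can be merged with the conjugation realising $\overline{H}$ inside $\PGSp_n(k_\lambda)$, which amounts to controlling the normaliser of $\PSp_n(\FF_\ell)$ in $\PGSp_n(\Fbar_\ell)$ carefully enough. Once this is done, the classification of overgroups and the field-minimality reduction are essentially formal applications of known results and of Theorem~\ref{thm:main2} respectively.
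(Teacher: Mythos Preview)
Your proposal is correct in outline, but the paper takes a cleaner and more self-contained route that avoids precisely the ``consolidation'' step you flag as the main obstacle. Instead of citing the Aschbacher--Kleidman--Liebeck classification of overgroups of $\PSp_n(\FF_\ell)$, the paper proves directly (Proposition~\ref{prop:moregt} and Corollary~\ref{cor:moregt}) that if $G\subseteq\GSp_n(\overline{K})$ has its transvection subgroup equal to $\Sp_n(K')$ for some subfield~$K'$, then automatically $G\subseteq\GSp_n(K')\,\overline{K}^\times$, and hence the projective image of~$G$ is already $\PSp_n(K')$ or $\PGSp_n(K')$. The key input is a computation of the normaliser $N_{\GSp_n(\overline{K})}(\Sp_n(K'))$ via Hua's description of $\Aut(\Sp_n(K'))$. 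Thus the paper never needs to merge two separate conjugations: the field~$K'$ comes straight out of the transvection definition of ``huge'' (Definition~\ref{defi:huge}), and one concludes by observing that $K'$ is a field of definition for~$\overline{\rho}_\lambda^\proj$, whence $K'=k_\lambda$ by the minimality clause of Theorem~\ref{thm:residual}. Your argument, by contrast, uses the equivalent characterisation of ``huge'' as containing a conjugate of $\Sp_n(\FF_\ell)$ (which the paper proves only in Part~II), then invokes a heavier external classification to get the shape $\PSp_n(\FF_{\ell^d})$ or $\PGSp_n(\FF_{\ell^d})$, and finally uses minimality to pin down~$d$. Both work; the paper's approach is more elementary and sidesteps the normaliser juggling you anticipated, while yours would be natural if one did not have the transvection description of huge image readily at hand.
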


Under these assumptions together with that of the existence of a maximally induced
place of a certain order (see Definition~\ref{defi:maximally-induced},
this goes back to~\cite{KLS1} in the context of $(n,p)$-groups)
we can derive the following application to the inverse Galois problem
(proved as Theorem~\ref{thm:IGP}).

\begin{thm}\label{thm:main4}
Let $\rho_\bullet = (\rho_\lambda)_\lambda$ be as in Theorem~\ref{thm:main3}.
Assume also that there is a place $\fq \in S$
such that $\rho_\bullet$ is maximally induced at~$\fq$ of order~$p$.

Then for any $d \mid \frac{p-1}{n}$ there exists a set~$\calL_d$ of rational primes~$\ell$
of positive density such that for all $\ell \in \calL_d$ there is a place $\lambda$ of~$L$
above~$\ell$ satisfying that the image of $\overline{\rho}_\lambda^\proj$
is $\PGSp_n(\FF_{\ell^d})$ or $\PSp_n(\FF_{\ell^d})$.
\end{thm}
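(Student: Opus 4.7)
The plan is to combine Theorem~\ref{thm:main3} with a Chebotarev density argument inside $K_{\rho_\bullet}$. By Theorem~\ref{thm:main3}, for every place~$\lambda$ of~$L$ outside some density-zero set the image of $\overline{\rho}_\lambda^\proj$ is $\PGSp_n(\kappa((K_{\rho_\bullet})_\lambda))$ or $\PSp_n(\kappa((K_{\rho_\bullet})_\lambda))$; it therefore suffices, for each divisor $d \mid (p-1)/n$, to produce a positive density set of rational primes~$\ell$ each carrying a place~$\lambda$ of~$L$ above~$\ell$ with $\kappa((K_{\rho_\bullet})_\lambda) = \FF_{\ell^d}$.

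First I would use the maximally induced hypothesis at~$\fq$ to force a prescribed cyclotomic subfield into~$K_{\rho_\bullet}$. Unwinding Definition~\ref{defi:maximally-induced}, the local representation at~$\fq$ is induced from a character of order~$p$ of an unramified cyclic degree-$n$ extension of~$\QQ_q$; a direct computation of its traces shows that they generate over~$\QQ$ the unique subfield $F \subset \QQ(\zeta_p)$ of degree $(p-1)/n$, namely the fixed field of the unique subgroup of order~$n$ in $\Gal(\QQ(\zeta_p)/\QQ) \cong (\ZZ/p)^\times$. Feeding this local trace information into the description of $K_{\rho_\bullet}$ in terms of inner twists provided by Theorem~\ref{thm:main2} then yields the inclusion $F \subset K_{\rho_\bullet}$.

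The heart of the argument is a Chebotarev density computation. Let $M$ be the Galois closure of $K_{\rho_\bullet}$ over~$\QQ$, and set $G := \Gal(M/\QQ)$, $N := \Gal(M/F)$ and $H := \Gal(M/K_{\rho_\bullet})$, so that $H \subset N$ and $G/N$ is cyclic of order $(p-1)/n$. Since $d \mid (p-1)/n$, the quotient $G/N$ contains an element $\overline{g}$ of order exactly~$d$; I would pick a lift $g \in G$ and apply Chebotarev to obtain a positive density set of rational primes~$\ell$ unramified in~$M$ whose Frobenius conjugacy class in~$G$ equals that of~$g$. For any such~$\ell$, the residue degrees of primes of $K_{\rho_\bullet}$ above~$\ell$ are the lengths of the $\langle g\rangle$-orbits on~$G/H$, and each such length is a positive multiple of~$d$, because $g^k \notin N$ for $0 < k < d$ forbids $g^k$ from lying in any $G$-conjugate of $H \subset N$. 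An orbit of length exactly~$d$ through some coset $xH$ exists if and only if $g^d \in xHx^{-1}$ for some $x \in G$; once the lift~$g$ is chosen to satisfy this, removing from the resulting prime set the density-zero exceptional set of Theorem~\ref{thm:main3} yields the desired~$\calL_d$.

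The main obstacle is arranging that some lift~$g$ of~$\overline{g}$ satisfies $g^d \in \bigcup_{x \in G} xHx^{-1}$. Since $g^d$ lies in~$N$ automatically and the target set is $G$-stable, I would address this by varying~$g$ over its coset~$\overline{g}N$: replacing~$g$ by $gn$ for $n \in N$ modifies $g^d$ by an ordered product of $g$-conjugates of~$n$, reducing the problem to a norm-type (and, when $N$ is abelian, Tate-cohomological) computation in~$N$. Exploiting that the cyclotomic inclusion $F \subset K_{\rho_\bullet}$ provides enough freedom in the choice of~$g$, the image of this family of $d$th powers should meet a $G$-conjugate of~$H$, completing the argument.
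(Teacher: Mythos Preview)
Your overall strategy matches the paper's: use the maximally induced place to force the degree-$(p-1)/n$ subfield $F$ of $\QQ(\zeta_p)$ into $K_{\rho_\bullet}$, then combine Theorem~\ref{thm:main3} with a density statement for primes of prescribed residue degree in~$K_{\rho_\bullet}$. The first step is the content of Lemmas~\ref{lem:xi} and~\ref{lem:maximally-induced} in the paper. For the second step, however, the paper does not carry out a direct Chebotarev argument; it simply quotes Proposition~7.2 of~\cite{DiWi}, which states that if a number field contains a cyclic extension of~$\QQ$ of degree~$d$, then the set of rational primes possessing a prime of residue degree exactly~$d$ above them has positive density.

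The genuine gap in your proposal is the step you yourself label ``the main obstacle'': having fixed $\bar g$ of order~$d$ in $G/N$, you propose to vary the lift $g$ over the coset $\bar g N$ in the hope that some $g^d$ lands in a $G$-conjugate of~$H$. This strategy can fail outright. Suppose $(p-1)/n=4$, $d=2$, and $K_{\rho_\bullet}/\QQ$ happens to be cyclic of degree~$8$; then $M=K_{\rho_\bullet}$ is its own Galois closure, $G\cong\ZZ/8\ZZ$, $H=\{0\}$, and $N=\{0,4\}$ with $G/N\cong\ZZ/4\ZZ$. The unique $\bar g$ of order~$2$ lifts to $g\in\{2,6\}$, and for both one has $2g=4\notin H=\{0\}$; since $G$ is abelian there are no other conjugates of~$H$. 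So no choice of lift works, and no ``norm-type'' or cohomological manipulation inside the coset $\bar g N$ can manufacture one. Primes of residue degree~$2$ in $K_{\rho_\bullet}$ certainly exist---they are exactly those with Frobenius equal to~$4$, the unique element of order~$2$ in~$G$---but this Frobenius lies in~$N$ and is therefore invisible to your parametrisation. A correct argument for the density statement must allow the Frobenius element to range beyond a single coset of~$N$.
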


The maximally induced place of order~$p$ serves to produce a certain element
$\xi_p \in \QQ(\zeta_p)$ of degree $\frac{p-1}{n}$ in~$K_{\rho_\bullet}$,
generalising the approach of~\cite{DiWi}. This element
guarantees the existence of a subextension of~$K_{\rho}$ that is cyclic degree~$d$,
from which the existence of a positive density set of primes of residue degree~$d$ can be derived.

\subsection*{Acknowledgements}

S.~A.-d.-R.\ worked on this article as a fellow of the Alexander-von-Humboldt foundation.
She thanks the Universit\'e du Luxembourg for its hospitality during a long term visit.
She was also partially supported by the project MTM2012-33830 of the Ministerio de Econom\'ia y Competitividad of Spain.  
L.~V.~D. was supported by the project MTM2012-33830 of the Ministerio de Econom\'ia y Competitividad of Spain and by an ICREA Academia Research Prize.
G.~W.\ was partially supported by the Sonderforschungsbereich TRR~45,
the DFG priority program~1489 and by the Universit\'e du Luxembourg.

\section{Inner twists of a group representation}\label{sec:inner-group}

In this section we determine under some mild conditions the smallest field over which
a member of the equivalence class of a (projective, symplectic) representation is defined.
The main results are Theorem~\ref{thm:K-rho} and Proposition~\ref{prop:E-rho}.
This part owes many of its ideas to Ribet's paper~\cite{Ri}, including
Papier's theorem cited in that article, as well as to other papers of Ribet.
A conceptual framework for some of those ideas is provided here as well as a generalisation.

We take some time and space to introduce the objects we work with in detail. A guiding
example the reader may have in mind is that of a newform $f=\sum_{n\ge 1} a_n q^n$ with
coefficient field $\QQ_f = \QQ(a_n \;|\; n \in \NN)$. For a prime~$\ell$ take $K= \QQ_\ell$
and $L$ the Galois closure over~$\QQ_\ell$ of the completion of $\QQ_f$ at some place above~$\ell$
(or $L=\Qbar_\ell$).
The $\rho$ we will be interested in are the $\ell$-adic Galois representation $\rho_{f}$
and its mod~$\ell$ reduction~$\overline{\rho}_{f}$ attached to $f$ with respect to the chosen place.

We are interested in $\ell$-adic fields (i.e.\ finite extensions of $\QQ_\ell$) as well as finite fields. Since the reasonings are very similar in both cases, we will treat them simultaneously. In what follows, {\bf when we write that a representation is ``(residually) absolutely irreducible'' we mean that it is ``residually absolutely irreducible'' in
the case of $\ell$-adic fields, and ``absolutely irreducible'' in the case of finite fields.}
Whenever we consider a morphism between two topological spaces we assume it to be continuous. In particular, representations and  characters will always be continuous.

\begin{setup}\label{setup}

\begin{itemize} 
\itemsep=0cm plus 0pt minus 0pt
\parsep=0.0cm

\item $K$ an $\ell$-adic field with the $\ell$-adic topology or a finite field with the discrete topology.

\item $L/K$ a Galois extension, $\Gamma := \Gal(L/K)$, endowed with the Krull topology.

\item $G$ a finite or a compact topological group. In later sections, $G$ will be a Galois group.

\item $\calE = \{ \epsilon: G \to L^\times \textnormal{character }\}$.

Note that the assumptions imply that $\Image(\epsilon)$ lies in a finite extension of $K$. Note also that $\Gamma$ acts on $\calE$ from the left by composition:
${}^\gamma \epsilon := \gamma \circ \epsilon$.

\item Form the semi-direct product $\calG := \calE \rtimes \Gamma$ for the above action.
Since we consider the $\Gamma$-entry more important than the $\calE$-entry, we denote
elements of $\calG$ as $(\gamma,\epsilon)$ with $\gamma \in \Gamma$ and $\epsilon \in \calE$
(instead of the other way around). We concretely have:
$$ (\gamma_1,\epsilon_1) \cdot (\gamma_2,\epsilon_2)
   := (\gamma_1\gamma_2 , ({}^{\gamma_1}\epsilon_2) \epsilon_1)
\textnormal{ and }
(\gamma,\epsilon)^{-1} = (\gamma^{-1},{}^{\gamma^{-1}}(\epsilon^{-1})).$$
Consequently, we have the exact sequence:
$$ 1 \to \calE \xrightarrow{\iota: \epsilon \mapsto (1,\epsilon)} \calG
               \xrightarrow{\pi: (\gamma,\epsilon) \mapsto \gamma} \Gamma \to 1.$$
This sequence is split in the obvious way.
Note that the other projection
$$c: \calG \to \calE, \;\;\; (\gamma,\epsilon) \mapsto \epsilon$$
is a $1$-cocycle, when letting $\calG$ act on $\calE$ through its quotient~$\Gamma$.

\item Let $n \in \NN$. The equivalence class of a representation
$\rho: G \to \GL_n(L)$ (for conjugation by an element of $\GL_n(L)$)
is denoted by~$[\rho]$ and equivalence of representations is denoted by~$\sim$.
We denote the set of equivalence classes by $\GLReps_n(G, L/K)$. Note $\rho$
is defined over a finite extension $\tilde{K}/K$ inside~$L$, that is
$\rho: G \to \GL_n(\tilde{K}) \xrightarrow{\textnormal{nat.emb.}} \GL_n(L)$ (This is a standard fact: the reader can find a proof in e.g.~\cite{Skinner08}, page 244).

\item Let $n \in \NN$ be even. The symplectic equivalence class of a representation
$\rho: G \to \GSp_n(L)$ (for conjugation by an element of $\GSp_n(L)$)
is also denoted by~$[\rho]$ and also symplectic equivalence of representations is denoted by~$\sim$.

We denote the set of equivalence classes by $\GSpReps_n(G,L/K)$. Note that
$\rho$ is defined over a finite extension $\tilde{K}/K$ inside~$L$, that is
$\rho: G \to \GSp_n(\tilde{K}) \xrightarrow{\textnormal{nat.emb.}} \GSp_n(L)$.
We make the convention that equivalence of symplectic representations always means
symplectic equivalence.
Note that the embedding $\GSp_n(L) \hookrightarrow \GL_n(L)$ allows us to see
elements of $\GSpReps_n(G,L/K)$ as elements of $\GLReps_n(G, L/K)$.
We will call $[\rho]$ (absolutely) irreducible, if (any member of) it is (absolutely) irreducible as a linear
representation.

\item Note that $\calG$ acts on $\GLReps_n(G, L/K)$ from the left by the following formula:
$$ (\gamma,\epsilon). [\rho] := [({}^\gamma \rho) \otimes_{L} \epsilon^{-1}],$$
where $\gamma \in \Gamma$, $\epsilon \in \calE$ and $[\rho] \in \GLReps_n(G, L/K)$.
In the same way, $\calG$ acts on the set $\GSpReps_n(G,L/K)$ from the left.

\item  Let $n \in \NN$ and $r_1,r_2: G \to \PGL_n(L)$ be
projective representations. We call $r_1$ and $r_2$ equivalent (also denoted $r_1 \sim r_2$)
if they are conjugate by the class (modulo scalars) of a matrix in $\GL_n(L)$.
The equivalence class of $r_1$ is also denoted $[r_1]$.
For $\rho: G\rightarrow \GL_n(L)$ we denote by $\rho^\proj$ the composition of $\rho$ with the natural projection $\GL_n(L)\rightarrow \PGL_n(L)$.

\item  Let $n \in \NN$ be even and $r_1,r_2: G \to \PGSp_n(L)$ be
projective symplectic representations. We call $r_1$ and $r_2$ equivalent (also denoted $r_1 \sim r_2$)
if they are conjugate by the class (modulo scalars) of a matrix in $\GSp_n(L)$.
The equivalence class of $r_1$ is also denoted $[r_1]$.
For $\rho: G\rightarrow \GSp_n(L)$ we denote by $\rho^\proj$ the composition of $\rho$ with the natural projection $\GSp_n(L)\rightarrow \PGSp_n(L)$.
\end{itemize}
\end{setup}

\begin{rem}\label{rem:rep_integrality}
Let $K$ be an $\ell$-adic field and $\calO_K$ its valuation ring.
\begin{enumerate}[(a)]
\item\label{rem:rep_integrality:a} 
It is well-known that any representation $\rho:G \to \GL_n(K)$ is conjugate in $\GL_n(K)$
to a representation $G \to \GL_n(\calO_K)$ (see e.g.\ \cite{serre-abelian}, I-2).
We have already used this implicitly when speaking about residual representations.

\item\label{rem:rep_integrality:b}
In the symplectic case we assume that $\rho:G \to \GSp_n(K)$ is residually
irreducible. One can show that then $\rho$ is conjugate in $\GSp_n(K)$
to a representation $G \to \GSp_n(\calO_K)$.

Indeed, if $\calL$ is a lattice provided by~(\ref{rem:rep_integrality:a}) and the standard
symplectic pairing is scaled such that its restriction to $\calL \times \calL$ is
surjective onto~$\calO_K$, then the residual irreducibility together with Nakayama's lemma
implies that the $\calO_K[G]$-homomorphism
$\calL \xrightarrow{v \mapsto (w \mapsto \langle w,v \rangle)} \Hom_{\calO_K}(\calL,\calO_K)$,
where the $G$-action on the right hand side is twisted by the multiplier of~$\rho$,
is an isomorphism. Thus, $\calL$ admits a symplectic $\calO_K$-basis and the
corresponding base change matrix is general symplectic.
\end{enumerate}
\end{rem}

For the rest of this section we assume that we are in Set-up~\ref{setup}. We now make the main definition of this section, which generalises standard
definitions for elliptic curves and modular forms.

\begin{defi}\label{defi:main}
Let $[\rho] \in \GLReps_n(G, L/K)$ or $[\rho] \in \GSpReps_n(G,L/K)$.
\begin{itemize}
\itemsep=0cm plus 0pt minus 0pt
\parsep=0.0cm

\item Define $\calG_{[\rho]}$ to be the stabiliser group of~$[\rho]$ in~$\calG$
(for the $\calG$-action on $\GLReps_n(G, L/K)$ or on $\GSpReps_n(G,L/K)$).
Its elements are called {\em inner twists of~$[\rho]$}.
Explicitly, let $(\gamma,\epsilon) \in \calG$. Then $(\gamma,\epsilon)$ is an inner twist of~$[\rho]$
if and only if $[\rho] = [({}^\gamma \rho) \otimes_{L} \epsilon^{-1}]$,
which is the case if and only if $[{}^\gamma \rho] = [\rho \otimes_{L} \epsilon]$.

\item Define the groups $\Gamma_{[\rho]} := \pi(\calG_{[\rho]}) \subseteq \Gamma$ and
$\calE_{[\rho]} := \iota^{-1}(\calG_{[\rho]}) = \iota^{-1}(\ker(\pi|_{\calG_{[\rho]}}))$.

\item We say that $[\rho]$ has {\em no complex multiplication} if $\calE_{[\rho]} = \{1\}$,
i.e.\ if and only if the projection $\pi:\calG_{[\rho]} \rightarrow \Gamma_{[\rho]}$ is an isomorphism.

\item Define the field $K_{[\rho]} := L^{\Gamma_{[\rho]}}$. It is called
the {\em projective field of definition of~$[\rho]$}
(see Theorem~\ref{thm:K-rho} for a justification of the terminology).

\item Define the group $\Delta_{[\rho]} := \{ \gamma \in \Gamma_{[\rho]} \;|\; (\gamma,1) \in \calG_{[\rho]} \}$.

\item Define the field $E_{[\rho]} := L^{\Delta_{[\rho]}}$. It is called
the {\em field of definition of~$[\rho]$} (see Proposition~\ref{prop:E-rho}~(\ref{prop:E-rho:b})
for a justification of the terminology).
\end{itemize}
\end{defi}

In our guiding example of a newform~$f$, $\rho_f$ has CM or a nontrivial inner twist
if and only if $f$ does.

\begin{lem}\label{lem:GSp_versus_GL}
\begin{enumerate}[(a)]
\item\label{prop:rep-theory:a} Let $[\rho] \in \GLReps_n(G, L/K)$ or $[\rho] \in \GSpReps_n(G,L/K)$
be absolutely irreducible. Then any $M \in \GL_n(L)$ commuting with all
$\rho(g)$ for $g \in G$ is a scalar matrix.
\item\label{lem:GSp_versus_GL:b} Let $\rho:G\rightarrow \GSp_n(L)$ be absolutely irreducible without complex multiplication, let $M\in GL_n(L)$ and assume that the image of $M^{-1}\rho M$ lies in $\GSp_n(L)$. Then $M\in \GSp_n(L)$, thus  $M^{-1}\rho M$ belongs to the symplectic equivalence class of $\rho$.
\end{enumerate}
\end{lem}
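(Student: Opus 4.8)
The plan is to treat part~(a) as Schur's lemma and to deduce part~(b) from it via the observation that, for an absolutely irreducible $\rho$ without complex multiplication, both the invariant symplectic form and its multiplier character are invariants of the linear isomorphism class of~$\rho$.

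For part~(a) I would pass to an algebraic closure $\overline{L}$ of~$L$, over which $\rho$ is irreducible. If $M\in\GL_n(L)$ commutes with all $\rho(g)$, pick an eigenvalue $\lambda\in\overline{L}$ of~$M$; then $M-\lambda I_n$ also commutes with $\rho(G)$, so its kernel is a $\overline{L}[G]$-subrepresentation and hence all of $\overline{L}^n$ (it is nonzero as $\lambda$ is an eigenvalue). Thus $M=\lambda I_n$, and since $M$ has entries in~$L$ we get $\lambda\in L$. The identical argument covers the symplectic case, where ``absolutely irreducible'' still means so as a linear representation.

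For part~(b), write $J$ for the Gram matrix of the standard symplectic form, so that $\GSp_n(L)=\{A\in\GL_n(L):{}^tA\,J\,A=\nu(A)\,J\text{ for some }\nu(A)\in L^\times\}$, and let $\mu\colon G\to L^\times$ be the multiplier of~$\rho$, i.e.\ ${}^t\rho(g)\,J\,\rho(g)=\mu(g)\,J$ for all~$g$. Since $M^{-1}\rho M$ also takes values in $\GSp_n(L)$, let $\mu'$ be its multiplier. A short computation shows that $J':={}^t(M^{-1})\,J\,M^{-1}$ is invertible and satisfies ${}^t\rho(g)\,J'\,\rho(g)=\mu'(g)\,J'$ for all~$g$. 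The crux is to show that $J$ and $J'$ are scalar multiples of each other. For this I would set $N:={J'}^{-1}J$, eliminate ${}^t\rho(g)$ between the two relations, and replace $g$ by $g^{-1}$, obtaining $N\,\rho(g)\,N^{-1}=\chi(g)\,\rho(g)$ with $\chi:=\mu\,{\mu'}^{-1}$ (here one uses $\mu(g^{-1})=\mu(g)^{-1}$, which follows from the defining relation, and similarly for $\mu'$). Thus $N$ realises an isomorphism $\rho\cong\chi\otimes\rho$, which by Definition~\ref{defi:main} says exactly that $(1,\chi)\in\calG_{[\rho]}$, i.e.\ $\chi\in\calE_{[\rho]}$. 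Since $[\rho]$ has no complex multiplication, $\calE_{[\rho]}=\{1\}$, so $\chi=1$; hence $\mu=\mu'$ and $N$ commutes with all $\rho(g)$, whence $N=c\,I_n$ with $c\in L^\times$ by part~(a), i.e.\ $J=c\,J'$. Finally, $J=c\,{}^t(M^{-1})\,J\,M^{-1}$ gives ${}^tM\,J\,M=c\,J$, so $M\in\GSp_n(L)$ (with multiplier~$c$), and therefore $M^{-1}\rho M$ lies in the symplectic equivalence class of~$\rho$.

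The only step with genuine content is the uniqueness of the invariant form up to scalar; this is exactly where the absence of complex multiplication is indispensable, since otherwise the twisting character $\chi$ need not be trivial and $J,J'$ could be genuinely inequivalent. Everything else --- part~(a), and the passage from $J=cJ'$ to $M\in\GSp_n(L)$ --- is Schur's lemma and routine bookkeeping with transposes.
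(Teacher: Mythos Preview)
Your proof is correct and follows essentially the same route as the paper's. In part~(b) your auxiliary matrix $N={J'}^{-1}J=M\,J^{-1}\,{}^tM\,J$ is literally the paper's $M\,J^{-1}\,M^{\tr}\,J$, and your twisting character $\chi=\mu\,{\mu'}^{-1}$ is the inverse of the paper's $m(\rho')/m(\rho)$; from there both arguments invoke the no-CM hypothesis to kill the character and then Schur's lemma to make the matrix scalar, giving $M\in\GSp_n(L)$. The only cosmetic differences are that you phrase the computation via the transported form~$J'$ and spell out Schur's lemma rather than citing it.
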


\begin{proof}
(\ref{prop:rep-theory:a}) This is a form of Schur's Lemma. See e.g.\ \cite{CurRei_Rep}, $\S$ 27 of Chapter IV, Lemma (27.3).

(\ref{lem:GSp_versus_GL:b}) Call $\rho' = M^{-1} \rho M$.
The symplecticity of $\rho$ and $\rho'$ implies for $g \in G$:
$$\rho(g)^\tr J \rho(g) = m(\rho(g)) J \textnormal{ and }
(M^{-1} \rho(g) M)^\tr J(M^{-1} \rho(g) M) = m(\rho'(g)) J,$$
where $m$ is the multiplier and $J$ is the Gram matrix of the standard
symplectic pairing. Combining these two equations, yields
$$ (M J^{-1} M^\tr J)^{-1} \rho(g) (M J^{-1} M^\tr J) = \frac{m(\rho'(g))}{m(\rho(g))} \rho(g).$$
Note that this equation implies that the character $\frac{m(\rho'(g))}{m(\rho(g))}$
lies in~$\calE_{[\rho]}$. As we are assuming that $\rho$ has no complex multiplication,
this character is trivial.
Using (\ref{prop:rep-theory:a}), we get that $M J^{-1} M^\tr J$ is a scalar matrix $\lambda \id_n$,
showing that $M \in \GSp_n(L)$, as claimed.
\end{proof}

One can find examples showing that the conditions in
Lemma \ref{lem:GSp_versus_GL}~(\ref{lem:GSp_versus_GL:b}) are necessary.
On our way to develop the main results of this section,
we first include a proposition summarising some representation theory to be used in the sequel.

\begin{prop}\label{prop:rep-theory}
\begin{enumerate}[(a)]

\item\label{prop:rep-theory:b} Let $[\rho], [\rho'] \in \GLReps_n(G, L/K)$ be absolutely irreducible.
If for all $g \in G$ the equality $\Tr(\rho(g)) = \Tr(\rho(g'))$ holds,
then $[\rho]=[\rho']$.
\item\label{prop:rep-theory:c} Let $[\rho], [\rho'] \in \GSpReps_n(G,L/K)$ be absolutely irreducible without
complex multiplication.
If $\Tr(\rho(g)) = \Tr(\rho'(g))$ for all $g \in G$,
then $[\rho]=[\rho']$.
\item\label{prop:rep-theory:d} Let $[\rho] \in \GLReps_n(G, L/K)$ be (residually) absolutely irreducible and let $K \subseteq \tilde{K} \subseteq L$
be a field extension such that $\Tr(\rho(g)) \in \tilde{K}$ for all $g \in G$.
Then there is $M \in \GL_n(L)$ such that $M^{-1} \rho(g) M \in \GL_n(\tilde{K})$ for
all $g\in G$. So, the equivalence class $[\rho]$ has a representative with target
in $\GL_n(\tilde{K})$.
\item\label{prop:rep-theory:e} Let $[\rho] \in \GSpReps_n(G,L/K)$ be (residually) absolutely irreducible
without complex multiplication and let $K \subseteq \tilde{K} \subseteq L$
be a field extension such that $\Tr(\rho(g)) \in \tilde{K}$ and $m(g) \in \tilde{K}$
(multiplier map) for all $g \in G$.
Then there is $M \in \GSp_n(L)$ such that $M^{-1} \rho(g) M \in \GSp_n(\tilde{K})$ for
all $g\in G$. So, the equivalence class $[\rho]$ has a representative with target
in $\GSp_n(\tilde{K})$.
\end{enumerate}
\end{prop}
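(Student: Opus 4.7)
The plan is to establish the four parts in the order (a), (b), (c), (d), using (a) as the foundational trace-determines-representation result. Part (b) will fall out by combining (a) with Lemma~\ref{lem:GSp_versus_GL}(\ref{lem:GSp_versus_GL:b}); part (c) will be a Galois descent / $2$-cocycle argument built on top of (a); and (d) will follow by applying (c) to the underlying linear representation and then invoking Lemma~\ref{lem:GSp_versus_GL}(\ref{lem:GSp_versus_GL:b}) once more to realise the conjugation inside $\GSp_n(L)$.

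For (a), the finite-field case is a direct consequence of Brauer--Nesbitt: Burnside's density theorem forces $\rho(G)$ to span $\Mat_n(L)$, so equality of trace functions determines the isomorphism class of an absolutely irreducible module. In the $\ell$-adic setting ``absolutely irreducible'' means residually absolutely irreducible, and the classical statement that a residually absolutely irreducible representation over a complete Noetherian local ring is determined up to conjugation by its trace function applies. Part (b) is then immediate: (a) yields some $M \in \GL_n(L)$ with $\rho' = M^{-1}\rho M$, and since $\rho'$ is symplectic while $\rho$ is absolutely irreducible without complex multiplication, Lemma~\ref{lem:GSp_versus_GL}(\ref{lem:GSp_versus_GL:b}) forces $M \in \GSp_n(L)$, so $[\rho]=[\rho']$ in $\GSpReps_n(G,L/K)$.

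For (c), set $H := \Gal(L/\tilde K)$. Because $\Tr(\rho(g)) \in \tilde K$, each conjugate ${}^\gamma \rho$ with $\gamma \in H$ has the same trace function as $\rho$, and (a) yields $M_\gamma \in \GL_n(L)$ with ${}^\gamma\rho = M_\gamma \rho M_\gamma^{-1}$, unique up to $L^\times$-scalars by Schur's lemma (Lemma~\ref{lem:GSp_versus_GL}(\ref{prop:rep-theory:a})). The failure of $\gamma \mapsto M_\gamma$ to be a genuine $1$-cocycle is controlled by a class in $H^2(H, L^\times)$, and vanishing of this class is equivalent to descent of $\rho$ to $\tilde K$. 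In the finite-field case $H^2(H, L^\times) = 0$ since the Brauer group of a finite field is trivial, so the $M_\gamma$ can be rescaled to a bona fide $1$-cocycle, and Hilbert~90 for $\GL_n$ produces $N \in \GL_n(L)$ implementing the descent. In the $\ell$-adic case, Remark~\ref{rem:rep_integrality}(\ref{rem:rep_integrality:a}) lets us realise $\rho$ over $\calO_L$, reduce the descent question to the residual finite-field case where it is solved, and then lift the descent back to characteristic zero by the rigidity of residually absolutely irreducible deformations --- this is precisely ``Papier's theorem'' quoted in Ribet~\cite{Ri}.

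Finally, (d) is obtained by first applying (c) to $\rho$ viewed as a linear representation, producing $M \in \GL_n(L)$ with $M^{-1}\rho M$ valued in $\GL_n(\tilde K)$. Since the multiplier $m$ is also $\tilde K$-valued, the conjugate is symplectic for the same standard Gram matrix and so has image in $\GSp_n(\tilde K)$; Lemma~\ref{lem:GSp_versus_GL}(\ref{lem:GSp_versus_GL:b}) then adjusts $M$ into $\GSp_n(L)$ without changing the resulting conjugated representation. The main obstacle in the whole programme is the descent step inside~(c) in the $\ell$-adic case: the Brauer group of an $\ell$-adic field $\tilde K$ is nontrivial in general, so the $H^2$ obstruction does not vanish on purely formal grounds. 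The clean way around it is exactly what I sketched: use integrality to reduce to the finite-field situation where the Brauer obstruction vanishes, and then lift via the deformation-theoretic rigidity afforded by residual absolute irreducibility.
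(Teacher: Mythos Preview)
Your arguments for (a), (b), and (c) are essentially those of the paper (the paper simply cites Brauer--Nesbitt for (a), Carayol's Th\'eor\`eme~2 for the $\ell$-adic case of (c), and the triviality of the Brauer group for the finite-field case; your sketch unpacks these references). Your treatment of (b) is identical to the paper's.

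There is, however, a genuine gap in your argument for (d). After applying (c) you have $M \in \GL_n(L)$ with $M^{-1}\rho(g)M \in \GL_n(\tilde K)$ for all $g$. You then assert that ``since the multiplier $m$ is also $\tilde K$-valued, the conjugate is symplectic for the same standard Gram matrix.'' This is false: if $\rho(g)^\tr J \rho(g) = m(g) J$, then
\[
\big(M^{-1}\rho(g)M\big)^{\tr}\, (M^\tr J M)\, \big(M^{-1}\rho(g)M\big) \;=\; m(g)\, M^\tr J M,
\]
so $M^{-1}\rho M$ is symplectic with respect to the Gram matrix $I := M^\tr J M$, not with respect to~$J$. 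In particular $M^{-1}\rho M$ need not land in $\GSp_n(L)$, and you therefore cannot invoke Lemma~\ref{lem:GSp_versus_GL}(\ref{lem:GSp_versus_GL:b}), whose hypothesis is precisely that the image of $M^{-1}\rho M$ lies in $\GSp_n(L)$. Your argument is circular at this point.

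The paper repairs this as follows. One first shows that $I = M^\tr J M$ actually lies in $\GL_n(\tilde K)$: conjugation by $I$ sends $M^{-1}\rho(g)M \in \GL_n(\tilde K)$ to $m(g)^{-1}\big((M^{-1}\rho(g)M)^\tr\big)^{-1} \in \GL_n(\tilde K)$ (using $m(g)\in\tilde K$), so by absolute irreducibility conjugation by $I$ preserves $\Mat_n(\tilde K)$, and Skolem--Noether forces $I \in \tilde K^\times \cdot \GL_n(\tilde K)$, hence $I \in \GL_n(\tilde K)$ after scaling. Thus $I$ defines a nondegenerate alternating form on $\tilde K^n$, and choosing a symplectic $\tilde K$-basis produces $N \in \GL_n(\tilde K)$ with $N^\tr I N = J$. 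Then $(MN)^{-1}\rho(MN)$ lands in $\GSp_n(\tilde K)$, and only now does part~(b) (equivalently Lemma~\ref{lem:GSp_versus_GL}(\ref{lem:GSp_versus_GL:b})) apply to conclude that $MN \in \GSp_n(L)$, i.e.\ that the new representative lies in the symplectic equivalence class~$[\rho]$.
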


\begin{proof}

(\ref{prop:rep-theory:b}) This follows from the Theorem of Brauer-Nesbitt, cf. \cite{CurRei_Rep}, $\S$30 of Chapter V, Theorem (3.15). 

(\ref{prop:rep-theory:c}) Applying (\ref{prop:rep-theory:b}),
we obtain that $\rho$ and $\rho'$ are equivalent as $\GL_n$-representations,
meaning that there is some matrix $M \in \GL_n(L)$ such that $\rho' = M^{-1} \rho M$.
Lemma \ref{lem:GSp_versus_GL}-(\ref{lem:GSp_versus_GL:b}) shows that $M \in \GSp_n(L)$, as claimed.

(\ref{prop:rep-theory:d}) For $\ell$-adic fields, this is Th\'eor\`eme~2 of~\cite{Ca}; see also \cite{Ma}, p.~255.
For finite fields the result follows from the triviality of the Brauer group.

(\ref{prop:rep-theory:e}) Applying (\ref{prop:rep-theory:d}) we obtain a matrix $M  \in \GL_n(L)$ such that
$M^{-1} \rho(g) M \in \GL_n(\tilde{K})$ for all $g \in G$. We want to show that there is a matrix
$N \in \GL_n(\tilde{K})$ such that $N^{-1} M^{-1} \rho(g) M N \in \GSp_n(\tilde{K})$
for all $g \in G$.
Again denote $J$ the Gram matrix of the standard symplectic pairing.
Then all $M^{-1} \rho(g) M$ respect the symplectic pairing with Gram
matrix $I := M^\tr J M$ up to the same multiplier $m(\rho(g))$, i.e.\
$$ (M^{-1} \rho(g) M)^\tr I (M^{-1} \rho(g) M) = m(\rho(g)) I$$
for all $g \in G$.

We claim that $I \in \GL_n(\tilde{K})$. Note that conjugating $M^{-1} \rho(g) M$
by~$I$ yields an element in $\GL_n(\tilde{K})$,
using here the assumption $m(\rho(g)) \in \tilde{K}$.
The absolute irreducibility of~$M^{-1} \rho M$ now implies that
conjugating by~$I$ preserves $M_n(\tilde{K})$ (see e.g.\ \cite{Ma}, p.~252).
The Skolem-Noether theorem (Corollary 3.63 of~\cite{CR})
consequently shows $I \in \GL_n(\tilde{K})$.

This now means that $I$ defines a symplectic pairing on $\tilde{K}^n$
(the representation space of~$M^{-1} \rho M$), which is respected by $M^{-1} \rho M$
(up to the multiplier).
The claimed existence of the matrix $N$ is now just the fact that in a symplectic
vector space a basis can be chosen such that the symplectic pairing has the standard form,
i.e.\ is given by~$J$. We conclude, using~(\ref{prop:rep-theory:c}), that $(MN)^{-1} \rho (MN)$
is in the equivalence class~$[\rho]$.
\end{proof}

We now include a word on topology.

\begin{lem}\label{lem:topology}
Let $[\rho] \in \GLReps_n(G, L/K)$ or $[\rho] \in \GSpReps_n(G,L/K)$.
Then $\Gamma_{[\rho]}$ and $\Delta_{[\rho]}$ are open subgroups of~$\Gamma$ (and thus of finite index).
\end{lem}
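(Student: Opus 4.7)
The plan is to exploit the fact (already recorded in Set-up~\ref{setup}) that any representation $\rho: G \to \GL_n(L)$ (respectively $\GSp_n(L)$) is defined over some finite extension $\tilde K$ of $K$ inside $L$. Since $\Gamma = \Gal(L/K)$ carries the Krull topology, the subgroup $\Gal(L/\tilde K)$ is by definition an open (hence finite-index) subgroup of $\Gamma$. It therefore suffices to show that $\Gal(L/\tilde K)$ is contained in $\Delta_{[\rho]}$, because then $\Delta_{[\rho]} \subseteq \Gamma_{[\rho]}$ will both contain an open subgroup and be open themselves (and automatically closed of finite index).

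To see this containment, pick $\gamma \in \Gal(L/\tilde K)$. Then $\gamma$ fixes every entry of every matrix $\rho(g)$, because $\rho$ factors through $\GL_n(\tilde K)$ (respectively $\GSp_n(\tilde K)$). Consequently ${}^{\gamma}\rho = \gamma \circ \rho = \rho$ as maps $G \to \GL_n(L)$, so the pair $(\gamma, 1) \in \calG$ stabilises $[\rho]$, i.e.\ $(\gamma,1) \in \calG_{[\rho]}$. By definition of $\Delta_{[\rho]}$ this yields $\gamma \in \Delta_{[\rho]}$, so
\[
\Gal(L/\tilde K) \;\subseteq\; \Delta_{[\rho]} \;\subseteq\; \Gamma_{[\rho]}.
\]

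Both $\Delta_{[\rho]}$ and $\Gamma_{[\rho]}$ are therefore unions of cosets of the open subgroup $\Gal(L/\tilde K)$, hence open in $\Gamma$. In the profinite case (the infinite one, e.g.\ $L = \overline{K}$) an open subgroup of $\Gamma$ automatically has finite index, and in the finite case the statement is trivial. This completes the argument. There is really no obstacle here: the proof reduces to the standard observation that a representation with values in $L$ already takes values in a finite subextension, together with the definition of the Krull topology.
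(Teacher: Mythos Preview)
Your proof is correct and follows essentially the same approach as the paper: both arguments locate an open subgroup of~$\Gamma$ (the paper's $\widetilde{\Gamma}_\rho$ acting trivially on~$\rho$, your $\Gal(L/\tilde K)$) inside $\Delta_{[\rho]}$, and then conclude that $\Delta_{[\rho]} \subseteq \Gamma_{[\rho]}$ are unions of cosets of this open subgroup. Your write-up is in fact more streamlined, since the paper's explicit coset analysis via the sets $\calE_i$ is not needed for openness once the containment $\widetilde{\Gamma}_\rho \subseteq \Delta_{[\rho]}$ is observed.
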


\begin{proof}
There is an open normal subgroup
$\widetilde{\Gamma}_{\rho}$ of~$\Gamma$ acting trivially on~$\rho$.
Then we have inclusions
$$ \widetilde{\Gamma}_{\rho} \subseteq \Delta_{[\rho]} \subseteq \Gamma_{[\rho]} \subseteq \Gamma,$$
all of which are of finite index.
Fix a system of representatives $\gamma_1,\dots,\gamma_m$ of
$\Gamma/\widetilde{\Gamma}_{\rho}$. Then
$$ \calE_i := \{\epsilon \in \calE \;|\; {}^{\gamma_i} \rho \sim \rho \otimes \epsilon\}
= \begin{cases}
\emptyset &\textnormal{ or} \\ \epsilon_i \calE_{[\rho]}& \textnormal{ for some } \epsilon_i \in \calE.
\end{cases}$$
It follows that
$\Gamma_{[\rho]} = \bigcup_{i=1, \calE_i \neq \emptyset}^m \gamma_i \widetilde{\Gamma}_{\rho}$
and
$\Delta_{[\rho]} = \bigcup_{i=1, \calE_i = \calE_{[\rho]}}^m \gamma_i \widetilde{\Gamma}_{\rho}$, thus they
are open subsets of~$\Gamma$.
\end{proof}

The following lemma is very useful in our applications of the above theory to compatible systems
of Galois representations, where the Frobenius traces lie in some global field.

\begin{lem}\label{lem:G-on-traces}
Let $[\rho] \in \GLReps_n(G, L/K)$ or $[\rho] \in \GSpReps_n(G,L/K)$ (in this case also
assume that $\rho$ has no complex multiplication) be absolutely irreducible.
Then $\calG_{[\rho]}$ is equal to the set
$$\{ (\gamma,\epsilon) \in \calG \;|\; \gamma(\Tr(\rho(g))) = \Tr(\rho(g)) \epsilon(g) \;\forall g \in G\}.$$
\end{lem}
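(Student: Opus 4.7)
The plan is to translate the stabiliser condition $(\gamma,\epsilon) \in \calG_{[\rho]}$ into the trace identity via the Brauer--Nesbitt theorem (and its symplectic refinement in Proposition~\ref{prop:rep-theory}). By definition of the action, $(\gamma,\epsilon) \in \calG_{[\rho]}$ is equivalent to $[{}^\gamma \rho] = [\rho \otimes \epsilon]$. Since $\gamma$ acts entry-wise and commutes with taking traces, one has $\Tr({}^\gamma \rho)(g) = \gamma(\Tr(\rho(g)))$, while $\Tr((\rho \otimes \epsilon)(g)) = \epsilon(g) \Tr(\rho(g))$. So the trace identity in the lemma is literally the equality of the traces of the two representations ${}^\gamma \rho$ and $\rho \otimes \epsilon$.

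The forward inclusion is immediate: equivalent representations share traces. For the reverse inclusion, assume the trace identity. Then ${}^\gamma \rho$ and $\rho \otimes \epsilon$ have equal traces, and both are absolutely irreducible (since $\rho$ is). In the $\GL$-case, Proposition~\ref{prop:rep-theory}-(\ref{prop:rep-theory:b}) (Brauer--Nesbitt) yields $[{}^\gamma \rho] = [\rho \otimes \epsilon]$ directly. In the $\GSp$-case one invokes Proposition~\ref{prop:rep-theory}-(\ref{prop:rep-theory:c}), which strengthens this conclusion to equality of symplectic equivalence classes.

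The only subtle point, which will be the place to be careful, is that Proposition~\ref{prop:rep-theory}-(\ref{prop:rep-theory:c}) assumes that \emph{both} representations in question have no complex multiplication, whereas our hypothesis is only that $\rho$ does. I would therefore include a short verification that $\calE_{[{}^\gamma \rho]} = {}^\gamma \calE_{[\rho]}$ and $\calE_{[\rho \otimes \epsilon]} = \calE_{[\rho]}$, both of which follow immediately from the defining equivalences $\rho' \sim \rho' \otimes \eta$ by unwinding the twist or the $\Gamma$-action; consequently, the vanishing of $\calE_{[\rho]}$ propagates to both twists, so that Proposition~\ref{prop:rep-theory}-(\ref{prop:rep-theory:c}) applies. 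Once this is noted, the proof reduces to the two short computations of traces above.
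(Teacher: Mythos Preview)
Your proof is correct and follows the same route as the paper, which simply cites Proposition~\ref{prop:rep-theory}~(\ref{prop:rep-theory:b}) and~(\ref{prop:rep-theory:c}) without further comment. Your extra care in checking that the no-CM hypothesis propagates to ${}^\gamma\rho$ and $\rho\otimes\epsilon$ is a legitimate detail that the paper's one-line proof glosses over; in fact, inspecting the proof of Proposition~\ref{prop:rep-theory}~(\ref{prop:rep-theory:c}) via Lemma~\ref{lem:GSp_versus_GL}~(\ref{lem:GSp_versus_GL:b}) shows that only one of the two representations actually needs to have no CM, but your verification is clean and makes the argument self-contained relative to the proposition as stated.
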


\begin{proof}
Proposition~\ref{prop:rep-theory} (\ref{prop:rep-theory:b}) and~(\ref{prop:rep-theory:c}).
\end{proof}

We continue with a useful observation on the values of $\epsilon$ occuring for inner twists.

\begin{lem}\label{lem:order}
Let $[\rho] \in \GLReps_n(G, L/K)$ or $[\rho] \in \GSpReps_n(G,L/K)$.
Furthermore, let $\chi: G \to K^\times$ be any character
and let $\psi: G \to L^\times$ be a character of finite order.
Let $\epsilon \in \calE$ occur in some $(\gamma,\epsilon) \in \calG_{[\rho]}$.
\begin{enumerate}[(a)]
\item\label{lem:order:a} If $\det(\rho) = \psi \chi$, then $\epsilon^n = \frac{{}^\gamma \psi}{\psi}$ and
the values of $\epsilon^n$ are contained in the field generated over~$K$ by the values of~$\psi$.
\item\label{lem:order:b} If $[\rho]$ is symplectic with multiplier map
$m(\rho) = \psi \chi$, then $\epsilon^2 = \frac{{}^\gamma \psi}{\psi}$
and the values of $\epsilon$ are contained in the field generated over~$K$ by the values of~$\psi$.
\end{enumerate}
\end{lem}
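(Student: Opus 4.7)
The plan is to unwind the defining property of an inner twist by applying the determinant (in case (a)) or the multiplier (in case (b)) to both sides, using that $\chi$ takes values in $K^\times$ and is therefore fixed by all of $\Gamma$.

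More precisely, since $(\gamma,\epsilon) \in \calG_{[\rho]}$, the definition of the $\calG$-action gives $[{}^\gamma \rho] = [\rho \otimes_L \epsilon]$. Equivalent representations share the same determinant, so $\det({}^\gamma \rho) = \det(\rho \otimes \epsilon) = \det(\rho) \cdot \epsilon^n$. On the other hand, by functoriality $\det({}^\gamma \rho) = {}^\gamma(\det \rho) = {}^\gamma(\psi \chi)$, and because $\chi$ takes values in $K^\times$, it is pointwise fixed by $\gamma$, so this equals $({}^\gamma \psi) \chi$. Comparing the two expressions, the $\chi$ factor cancels and we obtain $\epsilon^n = \frac{{}^\gamma \psi}{\psi}$. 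For the field-of-values statement, note that for any $g \in G$ the value $\psi(g)$ lies in the field $F \subseteq L$ generated by the values of $\psi$ over $K$, and $\gamma$ restricts to an automorphism of the Galois closure of $F/K$; hence $\frac{\gamma(\psi(g))}{\psi(g)}$ also lies in that field, which proves (a).

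Part (b) proceeds in exactly the same way, replacing the determinant by the multiplier map $m$. The only point needing verification is that $m(\rho \otimes \epsilon) = \epsilon^2 \cdot m(\rho)$: this follows at once from the symplecticity relation, since if $\rho(g)^{\tr} J \rho(g) = m(\rho(g)) J$ then $(\epsilon(g) \rho(g))^{\tr} J (\epsilon(g) \rho(g)) = \epsilon(g)^2 m(\rho(g)) J$. With this identity in hand, the same cancellation argument applied to $m(\rho) = \psi \chi$ yields $\epsilon^2 = \frac{{}^\gamma \psi}{\psi}$, and the field-of-values claim is immediate as above.

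There is really no serious obstacle here; the only subtle point worth flagging is the compatibility of $\Gamma$-action and $\otimes \epsilon$ with the determinant and multiplier functors, together with the crucial role played by $\chi$ being $K$-valued (without which the cancellation would not take place and one would only obtain a relation modulo the character ${}^\gamma \chi / \chi$). One might also note that since $\psi$ has finite order and $\epsilon^n$ (resp.\ $\epsilon^2$) equals a ratio of values of $\psi$, the character $\epsilon$ itself has finite order, which is useful elsewhere in the paper.
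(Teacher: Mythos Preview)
Your argument for part~(a) and for the identity $\epsilon^2 = \frac{{}^\gamma\psi}{\psi}$ in part~(b) is correct and matches the paper. However, there is a genuine gap in part~(b): read the statement again and note the asymmetry between the two parts. In~(a) the field-of-values claim concerns $\epsilon^n$, whereas in~(b) it concerns $\epsilon$ itself, not $\epsilon^2$. Your ``immediate as above'' only yields that $\epsilon^2$ takes values in $K(\psi)$ (since $K(\psi)/K$, being generated by roots of unity, is Galois and hence preserved by~$\gamma$); it says nothing a priori about~$\epsilon$, whose values could lie in a quadratic extension of $K(\psi)$.

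The paper closes this gap by an explicit root-of-unity computation. Write $m$ for the order of~$\psi$ and fix a primitive $m$-th root of unity~$\zeta$; then $\psi(g)=\zeta^b$ and $\gamma(\zeta)=\zeta^c$ for some integers $b,c$, so $\epsilon(g)^2=\zeta^{b(c-1)}$. If $m$ is odd, squaring is a bijection on~$\mu_m$, so $\epsilon(g)^2$ has a square root in~$\mu_m$ and hence $\epsilon(g)\in\{\pm1\}\cdot\mu_m\subseteq K(\zeta)$. If $m$ is even, then $c$ must be odd (else $\zeta^c$ is not primitive), so $c-1$ is even and $\epsilon(g)^2$ already lies in~$\mu_{m/2}$, forcing $\epsilon(g)\in\mu_m\subseteq K(\zeta)$. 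This is the missing step you need to supply.
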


\begin{proof}
Let $(\gamma,\epsilon) \in \calG_{[\rho]}$. We get:

(\ref{lem:order:a}) $\gamma (\det(\rho)) = \chi \gamma(\psi) = \psi \chi \epsilon^n$,
implying $\epsilon^n = \frac{{}^\gamma \psi}{\psi}$. The second assertion is now clear.

(\ref{lem:order:b}) The same calculation with the multiplier map yields $\epsilon^2
= \frac{{}^\gamma \psi}{\psi}$. Consequently, the order of $\epsilon$ divides~$2m$,
where $m$ is the order of~$\psi$.
Let $\zeta$ be a primitive $m$-th root of unity, $g\in G$ and $b, c\in \ZZ$
such that $\psi(g)=\zeta^b$, $\gamma(\zeta)=\zeta^c$. Then
$\epsilon(g)^2=\frac{\zeta^{bc}}{\zeta^{b}}=\zeta^{b(c-1)}$.
If $m$ is odd, there exists $d\in \ZZ$ such that $\zeta=(\zeta^2)^d$, thus 
$\epsilon(g)^2=\zeta^{2bd(c-1)}$ and $\epsilon(g)\in K(\zeta)$.
Assume now that $m$ is even. Then $c-1$ is also even (otherwise $c$ would be even,
and $\zeta^c$ would not be a primitive $m$-th root of unity).
Hence $\epsilon(g)^2$ has order dividing~$m/2$,
so that $\epsilon(g)$ has order dividing~$m$; thus the values of $\epsilon$ are a subset of
the values of~$\psi$.
\end{proof}

We now study projective representations.

\begin{lem}\label{lem:proj-char}
\begin{enumerate}[(a)]
\item\label{lem:proj-char:a} For $\rho: G \to \GL_n(L)$ and $\epsilon \in \calE$ one has
$\rho^\proj = (\rho \otimes \epsilon)^\proj$.

\item\label{lem:proj-char:b} For $[\rho] \in \GLReps_n(G, L/K)$ or $[\rho] \in \GSpReps_n(G,L/K)$ and $\epsilon \in \calE$ one has
$\rho^\proj \sim (\rho \otimes \epsilon)^\proj$.

\item\label{lem:proj-char:c} Let $[\rho_1],[\rho_2] \in \GLReps_n(G, L/K)$ or $[\rho_1],[\rho_2] \in \GSpReps_n(G,L/K)$
such that $\rho_1^\proj \sim \rho_2^\proj$.
Then there is $\epsilon \in \calE$ such that $[\rho_1] = [\rho_2 \otimes \epsilon]$.

\end{enumerate}
\end{lem}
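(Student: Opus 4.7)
The three statements follow directly from the observation that tensoring by a character modifies a representation only by scalar matrices, together with unwinding the definition of projective equivalence given in Set-up~\ref{setup}.

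For (\ref{lem:proj-char:a}), note that for every $g \in G$ one has $(\rho \otimes \epsilon)(g) = \epsilon(g)\,\rho(g)$, which differs from $\rho(g)$ only by the scalar matrix $\epsilon(g) I_n$, i.e.\ by an element of the kernel of $\GL_n(L) \twoheadrightarrow \PGL_n(L)$. Hence the two composed maps $G \to \PGL_n(L)$ literally coincide. For (\ref{lem:proj-char:b}), I first remark that the projective equivalence class $[\rho^\proj]$ depends only on $[\rho]$: if $\rho' = N^{-1} \rho N$ with $N \in \GL_n(L)$ (resp.\ $\GSp_n(L)$), then $\rho'^\proj$ is conjugate to $\rho^\proj$ by the image of $N$ in $\PGL_n(L)$ (resp.\ $\PGSp_n(L)$), i.e.\ $\rho'^\proj \sim \rho^\proj$. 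Combining this well-definedness with the equality from (\ref{lem:proj-char:a}) applied to any representative of $[\rho]$ gives (\ref{lem:proj-char:b}).

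For (\ref{lem:proj-char:c}), fix representatives $\rho_1$ and $\rho_2$. By the definition of $\sim$ for projective (symplectic) representations in Set-up~\ref{setup}, the hypothesis $\rho_1^\proj \sim \rho_2^\proj$ provides an $M \in \GL_n(L)$ (resp.\ an $M \in \GSp_n(L)$) whose class modulo scalars conjugates $\rho_1^\proj$ to $\rho_2^\proj$. Unwinding, for each $g \in G$ the matrices $M^{-1}\rho_1(g) M$ and $\rho_2(g)$ agree up to a scalar, so there is a unique $\epsilon(g) \in L^\times$ with
\[ M^{-1} \rho_1(g) M = \epsilon(g)\, \rho_2(g). \]
A direct computation using that $\rho_1$ and $\rho_2$ are homomorphisms shows that $\epsilon(gh) = \epsilon(g) \epsilon(h)$, and continuity of $\epsilon$ follows from that of $\rho_1$, $\rho_2$ and the map $M^{-1}(\cdot) M$; thus $\epsilon \in \calE$. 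Rewriting the displayed equation as $M^{-1} \rho_1 M = \rho_2 \otimes \epsilon$ and using that $M$ lies in $\GL_n(L)$ (resp.\ $\GSp_n(L)$) yields $[\rho_1] = [\rho_2 \otimes \epsilon]$ in the appropriate category.

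There is no real obstacle; the only point requiring attention is the symplectic case of (\ref{lem:proj-char:c}), where one must verify that the $M$ realising the equivalence $\rho_1 \sim \rho_2 \otimes \epsilon$ lies in $\GSp_n(L)$ rather than merely in $\GL_n(L)$. But this is automatic, since the defining condition of $\sim$ on $\PGSp_n$-valued projective representations in Set-up~\ref{setup} already forces $M \in \GSp_n(L)$. (Note also that $\rho_2 \otimes \epsilon$ is symplectic with multiplier $m(\rho_2)\epsilon^2$, so the equality of multipliers $m(\rho_1) = m(\rho_2)\epsilon^2$ is an automatic consequence, not an additional constraint.)
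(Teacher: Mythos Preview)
Your proof is correct and follows essentially the same approach as the paper's own proof. The paper simply declares (\ref{lem:proj-char:a}) and (\ref{lem:proj-char:b}) to be clear, and for (\ref{lem:proj-char:c}) defines $\epsilon(g) := s^{-1}\big(\rho_1(g)(M\rho_2(g)M^{-1})^{-1}\big)$ where $s:L^\times \to \GL_n(L)$ is the scalar embedding---which is exactly your displayed equation rearranged; your version is just a bit more explicit about multiplicativity, continuity, and the symplectic case.
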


\begin{proof}
(\ref{lem:proj-char:a}) and (\ref{lem:proj-char:b}) are clear.

(\ref{lem:proj-char:c}) Denote by $s$ the injective homomorphism $L^\times \to \GL_n(L)$
which sends $x$ to the scalar matrix $x \cdot \id_n$.
Let $M$ be a matrix in $\GL_n(L)$ (or in $\GSp_n(L)$)
such that $\rho_1^\proj = M \rho_2^\proj M^{-1}$.
Define the character $\epsilon: G \to L^\times$ by the formula
$$ \epsilon(g) := s^{-1}\big(\rho_1(g) (M\rho_2(g)M^{-1})^{-1}\big).$$
Note that $\epsilon$ is well-defined, is multiplicative
and that it obviously satisfies the requirements.
\end{proof}

For the next lemma note that $({}^\gamma \rho)^\proj = {}^\gamma ( \rho^\proj )$.

\begin{lem}\label{lem:proj}
Let $[\rho] \in \GLReps_n(G, L/K)$ or $[\rho] \in \GSpReps_n(G,L/K)$.
\begin{enumerate}[(a)]
\item\label{lem:proj:a} $\Gamma_{[\rho]} = \{ \gamma \in \Gamma \;|\; {}^\gamma \rho^\proj \sim \rho^\proj \}$.
\item\label{lem:proj:b} If $\rho^\proj$ factors as $G \to \PGL_n(\tilde{K}) \xrightarrow{\textnormal{nat.~emb.}} \PGL_n(L)$
or as $G \to \PGSp_n(\tilde{K}) \xrightarrow{\textnormal{nat.~emb.}} \PGSp_n(L)$
for some field $K \subseteq \tilde{K} \subseteq L$,
then $K_{[\rho]} \subseteq \tilde{K}$.
\end{enumerate}
\end{lem}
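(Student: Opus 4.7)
For part (a), the plan is to establish the two inclusions directly via Lemma~\ref{lem:proj-char}. For the forward inclusion, take $\gamma \in \Gamma_{[\rho]}$; by definition there exists $\epsilon \in \calE$ with $(\gamma,\epsilon) \in \calG_{[\rho]}$, i.e.\ $[{}^\gamma \rho] = [\rho \otimes \epsilon]$. Passing to projectivisations, the remark preceding the lemma gives ${}^\gamma(\rho^\proj) = ({}^\gamma \rho)^\proj$, and Lemma~\ref{lem:proj-char}(\ref{lem:proj-char:b}) gives $(\rho\otimes\epsilon)^\proj \sim \rho^\proj$; combining, ${}^\gamma \rho^\proj \sim \rho^\proj$. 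Conversely, if $\gamma \in \Gamma$ satisfies ${}^\gamma \rho^\proj \sim \rho^\proj$, apply Lemma~\ref{lem:proj-char}(\ref{lem:proj-char:c}) to the pair $(\rho_1,\rho_2) = ({}^\gamma \rho, \rho)$ to obtain $\epsilon \in \calE$ with $[{}^\gamma \rho] = [\rho \otimes \epsilon]$. This means $(\gamma,\epsilon) \in \calG_{[\rho]}$ and hence $\gamma = \pi(\gamma,\epsilon) \in \Gamma_{[\rho]}$.

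For part (b), the plan is to reduce to (a) and then invoke Galois theory. First I will observe that the natural $\Gamma$-action on $\PGL_n(L)$ (respectively $\PGSp_n(L)$) fixes every element of $\PGL_n(\tilde K)$ (resp.\ $\PGSp_n(\tilde K)$) pointwise under the subgroup $\Gal(L/\tilde K)$: any class in $\PGL_n(\tilde K)$ lifts to some $A \in \GL_n(\tilde K)$, and any other lift differs from $A$ by a scalar in $L^\times$, but the image in $\PGL_n(L)$ is independent of the lift, so $\gamma \in \Gal(L/\tilde K)$ sends the class to itself. Hence by the factorisation hypothesis, ${}^\gamma \rho^\proj = \rho^\proj$ identically (not merely up to equivalence) for every $\gamma \in \Gal(L/\tilde K)$. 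In particular ${}^\gamma \rho^\proj \sim \rho^\proj$, so by part~(a), $\gamma \in \Gamma_{[\rho]}$. This proves $\Gal(L/\tilde K) \subseteq \Gamma_{[\rho]}$.

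Finally, since $L/K$ is Galois, so is $L/\tilde K$, and the topological Galois correspondence (valid in both the $\ell$-adic and finite settings) gives $L^{\Gal(L/\tilde K)} = \tilde K$. Taking fixed fields reverses inclusions, so
\[
K_{[\rho]} = L^{\Gamma_{[\rho]}} \subseteq L^{\Gal(L/\tilde K)} = \tilde K,
\]
which is exactly what (b) asserts. I do not anticipate a real obstacle here: the only point requiring care is the mild topological check that passing from matrices to classes modulo scalars is compatible with the $\Gamma$-action on lifts, and this is handled by the lift-independence observation above. The content of the lemma is really just the translation of ``stabiliser of $[\rho]$ up to characters'' into ``stabiliser of the projectivisation'', with the field-theoretic consequence read off through Galois theory.
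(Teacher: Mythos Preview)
Your proof is correct and follows essentially the same approach as the paper's: both parts use Lemma~\ref{lem:proj-char} for the two inclusions in~(a), and for~(b) both identify $\Gal(L/\tilde K)$ as a subgroup of $\Gamma_{[\rho]}$ via~(a) and then apply the Galois correspondence. Your write-up simply spells out in more detail why $\Gal(L/\tilde K)$ acts trivially on $\PGL_n(\tilde K)$, which the paper takes for granted.
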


\begin{proof}
(\ref{lem:proj:a}) Let $\gamma \in \Gamma_{[\rho]}$. Then there is $\epsilon$ such that $(\gamma,\epsilon) \in \calG_{[\rho]}$,
i.e.\ ${}^\gamma \rho \sim \rho \otimes \epsilon$. Hence, ${}^\gamma \rho^\proj \sim \rho^\proj$
by Lemma~\ref{lem:proj-char}.
Conversely, if $\gamma \in \Gamma$ satisfying ${}^\gamma \rho^\proj \sim \rho^\proj$, then by
Lemma~\ref{lem:proj-char}, there is $\epsilon$ such that $(\gamma,\epsilon) \in \calG_{[\rho]}$,
whence $\gamma \in \Gamma_{[\rho]}$.

(\ref{lem:proj:b}) Let $U \subseteq \Gamma$ the closed subgroup such that $\tilde{K} = L^U$.
Then for all $\gamma \in U$ we have ${}^\gamma \rho^\proj = \rho^\proj$. Hence, $\gamma \in \Gamma_{[\rho]}$
by~(\ref{lem:proj:a}). Consequently, $U \subseteq \Gamma_{[\rho]}$, whence $K_{[\rho]} \subseteq \tilde{K}$.
\end{proof}

The lemma shows that any field over which $\rho^\proj$ can be defined contains $K_{[\rho]}$.
Our aim will be to show that under suitable assumptions $\rho^\proj$ can be defined over $K_{[\rho]}$.

\begin{lem}\label{lem:charpoly-coeff}
Let $[\rho] \in \GLReps_n(G, L/K)$ or $[\rho] \in \GSpReps_n(G,L/K)$.
Let $g \in G$ and let $X^n + \sum_{i=1}^n a_i(g) X^{n-i}$
be the characteristic polynomial of $\rho(g)$. Let $(\gamma,\epsilon) \in \calG_{[\rho]}$.
Then one has the equations
$$ \gamma(a_i(g)) = \epsilon(g)^i \cdot a_i(g)$$
for all~$i=1,\dots,n$.
\end{lem}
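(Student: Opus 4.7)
The plan is to compare the characteristic polynomial of $\rho(g)$ after applying the two operations that, by hypothesis, give equivalent representations. By definition of $\calG_{[\rho]}$, the condition $(\gamma,\epsilon)\in \calG_{[\rho]}$ says precisely that $[{}^\gamma \rho] = [\rho\otimes_L \epsilon]$, so for every $g\in G$ the matrices ${}^\gamma \rho(g) = \gamma(\rho(g))$ and $\epsilon(g)\,\rho(g)$ are conjugate in $\GL_n(L)$ (and in particular have the same characteristic polynomial). Note this holds equally well in the symplectic case, where the conjugating matrix lives in $\GSp_n(L)\subseteq \GL_n(L)$.

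First I would compute the characteristic polynomial of $\gamma(\rho(g))$: since $\gamma$ acts entrywise on matrices (it is a field automorphism applied to the entries), it commutes with the formation of $\det(X\cdot\id_n - \cdot)$, so
\[
\det\bigl(X\cdot\id_n - \gamma(\rho(g))\bigr) = X^n + \sum_{i=1}^n \gamma(a_i(g))\, X^{n-i}.
\]
Next I would compute the characteristic polynomial of $\epsilon(g)\rho(g)$, which is just a scalar multiplication of $\rho(g)$. If (over an algebraic closure of $L$) $\rho(g)$ has eigenvalues $\lambda_1,\dots,\lambda_n$, then $\epsilon(g)\rho(g)$ has eigenvalues $\epsilon(g)\lambda_1,\dots,\epsilon(g)\lambda_n$, so the $i$-th elementary symmetric function gets multiplied by $\epsilon(g)^i$. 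Hence
\[
\det\bigl(X\cdot\id_n - \epsilon(g)\rho(g)\bigr) = X^n + \sum_{i=1}^n \epsilon(g)^i\, a_i(g)\, X^{n-i}.
\]

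Equating the coefficients of the two characteristic polynomials gives $\gamma(a_i(g)) = \epsilon(g)^i\, a_i(g)$ for each $i=1,\dots,n$, which is the claim. There is no real obstacle here; the only thing to be slightly careful about is that in the symplectic case the equivalence in $[{}^\gamma\rho] = [\rho\otimes \epsilon]$ uses conjugation by an element of $\GSp_n(L)$, but this only strengthens the $\GL_n$-conjugacy used above, so the characteristic polynomial argument goes through verbatim.
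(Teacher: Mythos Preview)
Your proof is correct and follows exactly the approach of the paper, which simply says that the result follows by comparing the characteristic polynomials of $[\rho]$ and $(\gamma,\epsilon).[\rho]$. You have merely unpacked this one-line argument in detail.
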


\begin{proof}
This follows by comparing the characteristic polynomials of $[\rho]$
and $(\gamma,\epsilon).[\rho]$.
\end{proof}

\begin{cor}\label{cor:charpoly-coeff}
Let $[\rho] \in \GLReps_n(G, L/K)$ or $[\rho] \in \GSpReps_n(G,L/K)$ and $\epsilon \in \calE_{[\rho]}$.
\begin{enumerate}[(a)]
\item\label{cor:charpoly-coeff:a} The order of $\epsilon$ divides~$n$.
\item\label{cor:charpoly-coeff:b} Let $i \in \{1,\dots,n\}$ and $g \in G$. If $a_i(g) \neq 0$, then $\epsilon^i(g) = 1$.
\end{enumerate}
\end{cor}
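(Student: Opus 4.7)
The plan is to apply Lemma~\ref{lem:charpoly-coeff} with the trivial Galois element and read off both statements directly from the resulting identity on the coefficients of the characteristic polynomial.

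First I would unpack the hypothesis $\epsilon \in \calE_{[\rho]}$. By definition, $\calE_{[\rho]} = \iota^{-1}(\calG_{[\rho]})$, so $\epsilon \in \calE_{[\rho]}$ if and only if $(1,\epsilon) \in \calG_{[\rho]}$, where $1 \in \Gamma$ is the identity. This is the natural input for Lemma~\ref{lem:charpoly-coeff}: applying that lemma to the inner twist $(\gamma,\epsilon) = (1,\epsilon)$ yields the key relation
$$ a_i(g) \;=\; \epsilon(g)^i \cdot a_i(g) \qquad \text{for all } g \in G \text{ and all } i \in \{1,\dots,n\}. $$

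For part~(\ref{cor:charpoly-coeff:b}), the conclusion is immediate: if $a_i(g) \neq 0$, then dividing the above identity by $a_i(g)$ gives $\epsilon(g)^i = 1$.

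For part~(\ref{cor:charpoly-coeff:a}), I would specialise to $i = n$. The constant coefficient of the characteristic polynomial $X^n + \sum_{i=1}^n a_i(g) X^{n-i}$ is $a_n(g) = (-1)^n \det(\rho(g))$, which is nonzero for every $g \in G$ because $\rho(g) \in \GL_n(L)$. Applying part~(\ref{cor:charpoly-coeff:b}) with $i=n$ to every $g \in G$ therefore yields $\epsilon(g)^n = 1$ for all $g$, so $\epsilon^n$ is the trivial character and the order of $\epsilon$ divides~$n$. There is essentially no obstacle here; the whole argument is a two-line consequence of Lemma~\ref{lem:charpoly-coeff} once one observes that the top-degree coefficient $a_n$ never vanishes.
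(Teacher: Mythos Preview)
Your proof is correct and follows essentially the same approach as the paper: both apply Lemma~\ref{lem:charpoly-coeff} with $\gamma = 1$, read off part~(\ref{cor:charpoly-coeff:b}) directly, and deduce part~(\ref{cor:charpoly-coeff:a}) from the nonvanishing of $a_n(g)$. Your explicit mention of the sign $(-1)^n$ in $a_n(g)$ and of the identification $\calE_{[\rho]} = \iota^{-1}(\calG_{[\rho]})$ is slightly more detailed than the paper, but the argument is the same.
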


\begin{proof}
(\ref{cor:charpoly-coeff:a}) The coefficient $a_n(g)$ of the characteristic polynomial of $\rho(g)$ as in
Lemma~\ref{lem:charpoly-coeff} is the determinant of $\rho(g)$, which is a unit.
As $a_n(g) = \epsilon(g)^n a_n(g)$ for all~$g$, we conclude $\epsilon(g)^n=1$.

(\ref{cor:charpoly-coeff:b}) is immediate.
\end{proof}

\begin{lem}\label{lem:cm}
For $[\rho] \in \GLReps_n(G, L/K)$ or $[\rho] \in \GSpReps_n(G,L/K)$, consider
$H_{[\rho]} := \bigcap_{\epsilon \in \calE_{[\rho]}} \ker(\epsilon)$.
It is a closed normal subgroup of~$G$ with abelian quotient of exponent dividing~$n$.

If $\rho$ has complex multiplication, then the restriction of $\rho$ to $H_{[\rho]}$ is
not absolutely irreducible.
\end{lem}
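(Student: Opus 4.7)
The plan splits naturally into the structural claim about $H_{[\rho]}$ and the CM implication.

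For the structural claim, I would first observe that each $\epsilon \in \calE_{[\rho]}$ is continuous, so $\ker(\epsilon)$ is the preimage of $\{1\} \subseteq L^\times$ under a continuous map, hence closed; it is normal since $L^\times$ is abelian. An intersection of closed normal subgroups is closed normal, giving the first assertion. For the quotient, the map $G \to \prod_{\epsilon \in \calE_{[\rho]}} L^\times$, $g \mapsto (\epsilon(g))_\epsilon$, has kernel exactly $H_{[\rho]}$ and abelian target, so $G/H_{[\rho]}$ is abelian. For the exponent, Corollary~\ref{cor:charpoly-coeff}(\ref{cor:charpoly-coeff:a}) says every $\epsilon \in \calE_{[\rho]}$ has order dividing $n$, so $\epsilon(g^n) = \epsilon(g)^n = 1$ for every $g \in G$ and every such $\epsilon$; that is, $g^n \in H_{[\rho]}$, so $G/H_{[\rho]}$ has exponent dividing $n$.

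For the CM implication, the plan is a direct Schur-type argument, run for contradiction. Suppose $\rho$ has CM, so $\calE_{[\rho]}$ contains a nontrivial character $\epsilon$, and $(1,\epsilon) \in \calG_{[\rho]}$; by definition this gives $\rho \sim \rho \otimes \epsilon$ (in $\GL_n$, or in $\GSp_n$ in the symplectic case). Fix $M \in \GL_n(L)$ with
\[
M^{-1}\rho(g)M = \epsilon(g)\,\rho(g) \quad \text{for all } g \in G.
\]
For any $h \in H_{[\rho]}$ we have $\epsilon(h) = 1$, so $M$ commutes with $\rho(h)$.

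Now suppose, for contradiction, that $\rho|_{H_{[\rho]}}$ is absolutely irreducible. Then Lemma~\ref{lem:GSp_versus_GL}(\ref{prop:rep-theory:a}), applied to the restriction, forces $M$ to be a scalar matrix $\lambda \id_n$. Plugging this back into the displayed identity for an arbitrary $g \in G$ yields $\rho(g) = \epsilon(g)\rho(g)$, hence $\epsilon(g) = 1$ for all $g$, contradicting the choice of $\epsilon$. I expect no real obstacle here — the key point is simply to identify that the conjugation identity witnessing the inner twist automatically produces an element of the commutant of $\rho(H_{[\rho]})$, at which point Schur's lemma does the work. One minor care is that Lemma~\ref{lem:GSp_versus_GL}(\ref{prop:rep-theory:a}) is stated for $\GL_n(L)$-commutants, so even in the symplectic case we use its $\GL_n$ form applied to the restriction, which is exactly what is available.
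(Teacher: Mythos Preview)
Your proof is correct and follows essentially the same Schur-type argument as the paper for the CM implication: pick a nontrivial $\epsilon \in \calE_{[\rho]}$, let $M$ witness $\rho \sim \rho \otimes \epsilon$, observe $M$ commutes with $\rho(H_{[\rho]})$, and apply Schur's lemma. The paper's own proof actually omits the verification of the structural claim (closed, normal, abelian quotient of exponent dividing~$n$), so your use of continuity, the diagonal character map into $\prod L^\times$, and Corollary~\ref{cor:charpoly-coeff}(\ref{cor:charpoly-coeff:a}) usefully fills in what the paper leaves implicit.
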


\begin{proof}
Let $\epsilon \in \calE_{[\rho]}$ and let $M$ be a (symplectic) matrix
such that $\rho \otimes \epsilon = M \rho M^{-1}$.
If the restriction of $\rho$ to $H_{[\rho]}$ is absolutely irreducible, then restricting to~$H_{[\rho]}$ gives
$\rho |_{H_{[\rho]}}= M\rho M^{-1}|_{H_{[\rho]}}$. By absolute irreduciblity,
any matrix that commutes with all of~$\rho(H_{[\rho]})$ is scalar,
whence $\rho \otimes \epsilon = \rho$. Consequently, $\epsilon$ is the trivial character.
\end{proof}

\begin{prop}\label{prop:K-rho}
For $[\rho] \in \GLReps_n(G, L/K)$ or $[\rho] \in \GSpReps_n(G,L/K)$,
define the subgroup $I_{[\rho]} := \bigcap_{\epsilon \in \calE \textnormal{ s.t. } \exists
(\gamma,\epsilon) \in \calG_{[\rho]}} \ker(\epsilon)$.
Assume that the restriction of $\rho$ to $I_{[\rho]}$ is (residually) absolutely irreducible
(in particular, by Lemma~\ref{lem:cm} this implies that $[\rho]$ has no complex multiplication).
Then the equivalence class $[\rho]$ contains a member $\rho'$ satisfying:
\begin{itemize}
\item Let $g \in G$. Then $\rho'(g) \in \GL_n(K_{[\rho]}) \Leftrightarrow g \in I_{[\rho]}$.
\item ${}^\gamma \rho' = \rho' \otimes \epsilon$ for all $(\gamma,\epsilon) \in \calG_{[\rho]}$.
\end{itemize}
\end{prop}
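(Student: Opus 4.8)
The goal is to find a single representative $\rho'$ in the equivalence class $[\rho]$ whose field of definition is controlled by $I_{[\rho]}$ in the sense stated, and which is literally (not just up to conjugacy) invariant under all inner twists. The guiding idea, following Ribet, is that the traces of $\rho$ generate a field strictly larger than $K_{[\rho]}$ because of the inner twists, but after twisting $\rho$ by a suitable character $\epsilon$ (chosen so as to ``absorb'' the Galois action coming from inner twists) one obtains a representation whose traces, restricted to $I_{[\rho]}$, lie in $K_{[\rho]}$; then Proposition~\ref{prop:rep-theory} gives a conjugate defined over $K_{[\rho]}$ on $I_{[\rho]}$, and one checks it behaves correctly on all of $G$.

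First I would analyse the field generated by the traces. By Lemma~\ref{lem:G-on-traces}, for $(\gamma,\epsilon)\in\calG_{[\rho]}$ one has $\gamma(\Tr\rho(g)) = \epsilon(g)\Tr\rho(g)$ for all $g$; in particular for $g\in I_{[\rho]}$ every such $\epsilon(g)=1$, so $\gamma$ fixes $\Tr\rho(g)$ for all $\gamma\in\Gamma_{[\rho]}$ and all $g\in I_{[\rho]}$. Hence $\Tr\rho(g)\in K_{[\rho]}$ for every $g\in I_{[\rho]}$. Since $\rho$ is symplectic (or we additionally control the multiplier, which is likewise $\Gamma_{[\rho]}$-invariant after an analogous argument, or we are in the $\GL$ case), Proposition~\ref{prop:rep-theory}~(\ref{prop:rep-theory:d}) or~(\ref{prop:rep-theory:e}) applied to the restriction $\rho|_{I_{[\rho]}}$ — which is (residually) absolutely irreducible by hypothesis — produces $M\in\GL_n(L)$ (resp.\ $\GSp_n(L)$) with $M^{-1}\rho(g)M\in\GL_n(K_{[\rho]})$ (resp.\ $\GSp_n(K_{[\rho]})$) for all $g\in I_{[\rho]}$. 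Replace $\rho$ by this conjugate and call it $\rho'$; then $\rho'|_{I_{[\rho]}}$ already has target in $\GL_n(K_{[\rho]})$, and in particular $\rho'$ is itself defined over a finite extension of $K_{[\rho]}$ inside $L$, so it makes sense to speak of $\gamma$ acting on $\rho'$ for $\gamma\in\Gamma_{[\rho]}$.

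Next I would upgrade the equivalences ${}^\gamma\rho'\sim\rho'\otimes\epsilon$ (which hold for $(\gamma,\epsilon)\in\calG_{[\rho]}$ by definition of inner twist) to actual equalities. Fix $(\gamma,\epsilon)\in\calG_{[\rho]}$. Then ${}^\gamma\rho'$ and $\rho'\otimes\epsilon$ are conjugate by some $N_\gamma\in\GL_n(L)$ (or $\GSp_n(L)$, using Lemma~\ref{lem:GSp_versus_GL}~(\ref{lem:GSp_versus_GL:b}) and the no-CM hypothesis). Restrict to $I_{[\rho]}$: since every $\epsilon'$ arising from $\calG_{[\rho]}$ is trivial on $I_{[\rho]}$ (that is the definition of $I_{[\rho]}$), we get ${}^\gamma\rho'|_{I_{[\rho]}} = \rho'|_{I_{[\rho]}}$ — indeed $\rho'|_{I_{[\rho]}}$ has values in $\GL_n(K_{[\rho]})$ and $\gamma$ fixes $K_{[\rho]}$ pointwise — so $N_\gamma$ commutes with the absolutely irreducible $\rho'|_{I_{[\rho]}}$, hence is scalar by Schur (Lemma~\ref{lem:GSp_versus_GL}~(\ref{prop:rep-theory:a})). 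Therefore ${}^\gamma\rho' = \rho'\otimes\epsilon$ on all of $G$, as a genuine equality of representations. This gives the second bullet point. For the first bullet, ``$\Leftarrow$'' is already established; for ``$\Rightarrow$'', suppose $\rho'(g)\in\GL_n(K_{[\rho]})$. Then $\rho'(g)$ is fixed by every $\gamma\in\Gamma_{[\rho]}$, so by the just-proved identity $\rho'(g) = \epsilon(g)\rho'(g)$ for every $\epsilon$ occurring in $\calG_{[\rho]}$; since $\rho'(g)$ is invertible, $\epsilon(g)=1$ for all such $\epsilon$, i.e.\ $g\in\bigcap\ker\epsilon = I_{[\rho]}$.

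\textbf{Main obstacle.} The delicate point is the transition from the representation $\rho$ we start with to a representative that is \emph{honestly} defined over a finite extension of $K_{[\rho]}$ so that the symbol ${}^\gamma\rho'$ makes literal sense and so that $\gamma$ acting on entries in $K_{[\rho]}$ is the identity: this rests precisely on the (residual) absolute irreducibility of $\rho|_{I_{[\rho]}}$, which is exactly the hypothesis of the proposition, feeding into Proposition~\ref{prop:rep-theory}~(\ref{prop:rep-theory:d})/(\ref{prop:rep-theory:e}). A secondary subtlety, in the residual/$\ell$-adic case, is that ``residually absolutely irreducible'' must propagate correctly through the conjugation by $M$ and through the Schur-type argument; here Remark~\ref{rem:rep_integrality} guarantees one may work with an $\calO$-lattice so that reduction is available, and the Schur argument applies to the residual representation, after which Nakayama-type lifting (as in the proof of the integrality remark) transfers the conclusion to $\rho'$ itself. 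Once these are in place, everything else is a direct consequence of the definitions of $I_{[\rho]}$, $\Gamma_{[\rho]}$ and $K_{[\rho]}$ together with Schur's lemma.
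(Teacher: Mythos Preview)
Your proof is correct and follows essentially the same route as the paper: show that traces (and, in the symplectic case, the multiplier) of $\rho|_{I_{[\rho]}}$ lie in $K_{[\rho]}$, apply Proposition~\ref{prop:rep-theory}~(\ref{prop:rep-theory:d})/(\ref{prop:rep-theory:e}) to conjugate so that $\rho'|_{I_{[\rho]}}$ lands in $\GL_n(K_{[\rho]})$, and then use Schur on $I_{[\rho]}$ to force the conjugating matrix $N_\gamma$ to be scalar, which upgrades the inner-twist equivalences to equalities and yields both bullet points. The only cosmetic differences are that the paper cites Lemma~\ref{lem:charpoly-coeff} rather than Lemma~\ref{lem:G-on-traces} for the trace computation, and your discussion of Nakayama-type lifting in the ``Main obstacle'' paragraph is not needed: residual absolute irreducibility implies absolute irreducibility, so Schur (Lemma~\ref{lem:GSp_versus_GL}~(\ref{prop:rep-theory:a})) applies directly without any reduction-and-lift argument.
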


\begin{proof}
Due to Lemma~\ref{lem:charpoly-coeff}, the character of $\rho$ restricted
to $I_{[\rho]}$ takes values in $K_{[\rho]}$.
If $\rho$ is symplectic, the formula $\epsilon^2 = \frac{{}^\gamma (m\circ \rho)}{m \circ \rho}$
for all $(\gamma,\epsilon) \in \calG_{[\rho]}$
(see also Lemma~\ref{lem:order}~(\ref{lem:order:b})) shows that
the multiplier of~$\rho$ restricted to $I_{[\rho]}$ takes values in~$K_{[\rho]}$.
Due to the (residual) irreduciblity assumption, $\rho|_{I_{[\rho]}}$ is thus conjugate by some
(symplectic) matrix $M$ to a representation defined over $K_{[\rho]}$,
see Proposition~\ref{prop:rep-theory} (\ref{prop:rep-theory:d}) and~(\ref{prop:rep-theory:e}).
Thus $\rho' := M \rho M^{-1}$ is a representation of~$G$ such that
its restriction to $I_{[\rho]}$ takes its values in the (symplectic) matrices with entries in~$K_{[\rho]}$.

We start by showing the second property. For this, let $(\gamma,\epsilon) \in \calG_{[\rho]}$.
Since $[{}^\gamma \rho] = [\rho \otimes \epsilon]$, there is a (symplectic) matrix $N$ such that
${}^\gamma \rho' = N (\rho' \otimes \epsilon) N^{-1}$. Restricting to $I_{[\rho]}$ and using that this
restriction is absolutely irreducible, the matrix~$N$ has to be scalar
(see Proposition~\ref{lem:GSp_versus_GL}~(\ref{prop:rep-theory:a})),
showing ${}^\gamma \rho' = \rho' \otimes \epsilon$.

Let now $g \in G \setminus I_{[\rho]}$. Then there is
a $(\gamma,\epsilon) \in \calG_{[\rho]}$ such that $\epsilon(g) \neq 1$. Hence,
$\gamma(\rho'(g)) = \rho'(g) \epsilon(g) \neq \rho'(g)$, showing $\rho'(g) \not\in \GL_n(K_{[\rho]})$.
\end{proof}

Our first main theorem can now be obtained as an application of Hilbert's Satz~90.

\begin{thm}\label{thm:K-rho}
Let $[\rho] \in \GLReps_n(G, L/K)$ or $[\rho] \in \GSpReps_n(G,L/K)$
such that its restriction to $I_{[\rho]}$ (see Proposition~\ref{prop:K-rho} for its definition)
is (residually) absolutely irreducible.
Then the equivalence class of $\rho^\proj$ has a member that factors through $\PGL_n(K_{[\rho]})$
or $\PGSp_n(K_{[\rho]})$, respectively.
Combining with Lemma~\ref{lem:proj}, $K_{[\rho]}$ is hence the smallest subfield of~$L$
with this property.
\end{thm}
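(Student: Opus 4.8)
The plan is to use Proposition~\ref{prop:K-rho} to fix a good representative $\rho'$ in the class $[\rho]$, and then to produce from $\rho'$ a $1$-cocycle with values in $\PGL_n(L)$ (or $\PGSp_n(L)$) whose triviality, guaranteed by a projective version of Hilbert~90, will exhibit a member of $[\rho^\proj]$ defined over $K_{[\rho]}$. The minimality of $K_{[\rho]}$ is exactly Lemma~\ref{lem:proj}~(\ref{lem:proj:b}), so only the existence part needs work.

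First I would replace $\rho$ by the representative $\rho'$ furnished by Proposition~\ref{prop:K-rho}, so that ${}^\gamma \rho' = \rho' \otimes \epsilon$ for every $(\gamma,\epsilon) \in \calG_{[\rho]}$. Fix $\gamma \in \Gamma_{[\rho]}$ and choose $\epsilon$ with $(\gamma,\epsilon) \in \calG_{[\rho]}$; the relation ${}^\gamma \rho' = \rho' \otimes \epsilon$ gives ${}^\gamma(\rho'^\proj) = \rho'^\proj$, i.e. $\gamma$ fixes the matrix entries of $\rho'^\proj$ up to the ambiguity of scalars. More precisely, for $\gamma \in \Gamma$ not necessarily in $\Gamma_{[\rho]}$ but with ${}^\gamma \rho'^\proj \sim \rho'^\proj$ — which by Lemma~\ref{lem:proj}~(\ref{lem:proj:a}) is the same as $\gamma \in \Gamma_{[\rho]}$ — one picks a matrix $c(\gamma) \in \GL_n(L)$ (resp. in $\GSp_n(L)$, using Lemma~\ref{lem:GSp_versus_GL}~(\ref{lem:GSp_versus_GL:b}) together with the no-CM hypothesis that follows from absolute irreducibility of $\rho'|_{I_{[\rho]}}$) conjugating ${}^\gamma\rho'$ to $\rho' \otimes \epsilon$; for $\gamma \in \Gamma_{[\rho]}$ this means $\rho'^\proj = \overline{c(\gamma)}\, {}^\gamma(\rho'^\proj)\, \overline{c(\gamma)}^{-1}$ in $\PGL_n(L)$ (resp. $\PGSp_n(L)$), where the bar denotes the image modulo scalars. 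The key point, using that $\rho'|_{I_{[\rho]}}$ is absolutely irreducible and Schur's lemma (Lemma~\ref{lem:GSp_versus_GL}~(\ref{prop:rep-theory:a})), is that $\overline{c(\gamma)}$ is determined by $\gamma$ — two choices differ by a scalar — so $\gamma \mapsto \overline{c(\gamma)}$ is a well-defined map $\Gamma_{[\rho]} \to \PGL_n(L)$ (resp. $\to \PGSp_n(L)$), and one checks directly from the cocycle relation ${}^{\gamma_1}\rho'^\proj$, ${}^{\gamma_1\gamma_2}\rho'^\proj$ that it is a (continuous) $1$-cocycle for the $\Gamma_{[\rho]} = \Gal(L/K_{[\rho]})$-action.

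Next I would invoke the vanishing of $H^1(\Gal(L/K_{[\rho]}), \PGL_n(L))$, respectively of $H^1(\Gal(L/K_{[\rho]}), \PGSp_n(L))$. For $\PGL_n$ over an $\ell$-adic or finite field this is classical (it is controlled by the Brauer group via the sequence $1 \to L^\times \to \GL_n(L) \to \PGL_n(L) \to 1$, and $\mathrm{Br}$ of a finite field is trivial while the relevant piece for local fields vanishes because $\GL_n$ has trivial $H^1$ by Hilbert~90 and $\mathrm{Br}$ is handled in the required degree); for $\PGSp_n$ one argues similarly from $1 \to L^\times \to \GSp_n(L) \to \PGSp_n(L) \to 1$, noting $H^1(\Gamma_{[\rho]},\GSp_n(L))$ is trivial (again Hilbert~90-type, since $\GSp_n$ is a connected rational group / special group in the relevant sense) so the obstruction again lands in $\mathrm{Br}(K_{[\rho]})$ in a degree where it vanishes over local and finite fields. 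Triviality of the cocycle means there is a class $\overline{A} \in \PGL_n(L)$ (resp. $\PGSp_n(L)$) with $\overline{c(\gamma)} = \overline{A}^{-1}\,{}^\gamma\overline{A}$ for all $\gamma \in \Gamma_{[\rho]}$; then $\overline{A}\,\rho'^\proj\,\overline{A}^{-1}$ is fixed by all of $\Gal(L/K_{[\rho]})$ entry-by-entry, hence factors through $\PGL_n(K_{[\rho]})$ (resp. $\PGSp_n(K_{[\rho]})$), and it lies in $[\rho^\proj]$ by construction. Finally Lemma~\ref{lem:proj}~(\ref{lem:proj:b}) shows no smaller field works.

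The main obstacle I anticipate is the bookkeeping around the projective cocycle: one must verify that $\overline{c(\gamma)}$ is genuinely well-defined (this is where residual absolute irreducibility on $I_{[\rho]}$ — not merely on $G$ — is essential, since $\calG_{[\rho]}$ involves the $\epsilon$'s that cut out $I_{[\rho]}$) and continuous, and in the symplectic case that one may take the conjugating matrices symplectic, which is precisely Lemma~\ref{lem:GSp_versus_GL}~(\ref{lem:GSp_versus_GL:b}) and forces the no-CM hypothesis. The appeal to $H^1(-,\PGL_n) = 0$ resp. $H^1(-,\PGSp_n)=0$ over local and finite fields should be cited rather than reproved; its symplectic variant is the only slightly non-standard ingredient and deserves a precise reference.
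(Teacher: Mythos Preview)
Your approach is correct in outline but takes an unnecessary detour, and the paper's route is more direct. Once you invoke Proposition~\ref{prop:K-rho} and pass to the representative $\rho'$ with ${}^\gamma\rho'=\rho'\otimes\epsilon$ \emph{on the nose} (not merely up to conjugacy), you already have ${}^\gamma(\rho'^\proj)=\rho'^\proj$ as maps $G\to\PGL_n(L)$, so your conjugating class $\overline{c(\gamma)}$ may be taken to be the identity for every~$\gamma$. The $\PGL_n$- (or $\PGSp_n$-)valued cocycle is therefore trivially a coboundary with $\overline{A}=1$, and the vanishing of $H^1(\Gamma_{[\rho]},\PGL_n(L))$ or $H^1(\Gamma_{[\rho]},\PGSp_n(L))$ is never needed. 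What \emph{is} needed is the step you phrase as ``fixed entry-by-entry'': one must pass from $\rho'^\proj(g)\in\PGL_n(L)^{\Gamma_{[\rho]}}$ to $\rho'^\proj(g)\in\PGL_n(K_{[\rho]})$. That is not literal entrywise invariance (projective matrices have no entries), but it does follow from the long exact sequence attached to $1\to L^\times\to\GL_n(L)\to\PGL_n(L)\to 1$ together with $H^1(\Gamma_{[\rho]},L^\times)=0$, i.e.\ classical Hilbert~90.

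The paper's proof carries out exactly this last step, and nothing more: after Proposition~\ref{prop:K-rho}, for each fixed $g\in G$ the map $c_g\colon\Gamma_{[\rho]}\to L^\times$, $\gamma\mapsto s^{-1}\big({}^\gamma\rho'(g)\,\rho'(g)^{-1}\big)$ (which equals $\epsilon(g)$ for any $(\gamma,\epsilon)\in\calG_{[\rho]}$), is a continuous $1$-cocycle; Hilbert~90 gives $a_g\in L^\times$ with $c_g(\gamma)=a_g/\gamma(a_g)$, whence $\gamma(a_g\rho'(g))=a_g\rho'(g)$ and $a_g\rho'(g)\in\GL_n(K_{[\rho]})$ (resp.\ $\GSp_n(K_{[\rho]})$). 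Passing to projective classes gives the result. This avoids entirely the nonstandard symplectic $H^1$ vanishing you flag as needing a reference; only $H^1(-,L^\times)=0$ is used. So your plan works, but you can excise the $\PGL_n$/$\PGSp_n$ cohomology paragraph and replace the ``entry-by-entry'' line with the scalar Hilbert~90 argument above.
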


\begin{proof}
By Proposition~\ref{prop:K-rho}, we may and do assume that
${}^\gamma \rho = \rho \otimes \epsilon$ for all $(\gamma,\epsilon) \in \calG_{[\rho]}$.

Now let $g \in G$. Consider the continuous $1$-cocycle $c_g:\Gamma_{[\rho]}\rightarrow L^{\times}$ defined by 
$c_g(\gamma)=s^{-1}({}^{\gamma}\rho(g)\rho(g)^{-1})$, where $s:L^{\times}\rightarrow \GL_n(L)$ denotes the morphism which sends $x$ to the scalar matrix $x\cdot \id_n$.
By Hilbert's Satz 90, $H^1(\Gamma_{[\rho]}, L^\times)$
is the trivial module. Hence, $c_g$ is a coboundary,
i.e.\ it is of the form $c_g(\gamma) = \frac{a_g}{\gamma(a_g)}$ with
some $a_g \in L^\times$.
Then for any $g \in G$ and any $\gamma \in \Gamma_{[\rho]}$ we obtain
$$ {}^\gamma \rho(g) \otimes \frac{\gamma(a_g)}{a_g} = \rho(g)
\textnormal{ and consequently } \gamma(\rho(g) a_g) = \rho(g) a_g,$$
showing that $\rho(g) a_g$ belongs to $\GL_n(K_{[\rho]})$ (or to $\GSp_n(K_{[\rho]})$).
Taking this equation modulo scalars, yields the theorem.
\end{proof}

\begin{rem}
The previous theorem shows that the field $K_{[  \rho  ]}$ is related to the minimal field of definition of the adjoint representation introduced by R. Pink in a more general context in \cite{Pink98}. 
\end{rem}

The following proposition complements Theorem~\ref{thm:K-rho} by clarifying the relation
between $K_{[\rho]}$ and the minimal field over which a member of the equivalence class~$[\rho]$
can be defined.

\begin{prop}\label{prop:E-rho}
Let $[\rho] \in \GLReps_n(G, L/K)$ or $[\rho] \in \GSpReps_n(G,L/K)$ be (residually) absolutely irreducible. Let $\chi:G\rightarrow K^{\times}$ be any character and let $\psi:G\rightarrow L^{\times}$ be a character of finite order.
If $[\rho]$ is symplectic, assume that the multiplier map is $m(\rho)=\psi\chi$;
otherwise assume that $\det\rho=\psi\chi$ and $E_{[\rho]}$ contains the $n$-th roots of
the values of~$\psi$.

\begin{enumerate}[(a)]
\item\label{prop:E-rho:a} $\Delta_{[\rho]}$ is an open normal subgroup of $\Gamma_{[\rho]}$ and, hence, $E_{[\rho]}/K_{[\rho]}$ is a
finite Galois extension with Galois group $\Gamma_{[\rho]}/\Delta_{[\rho]}$.
In particular, one has $\gamma(E_{[\rho]}) = E_{[\rho]}$ for all $\gamma \in \Gamma_{[\rho]}$.

\item\label{prop:E-rho:b} If $[\rho]$ is symplectic, also assume that $[\rho]$ has no complex multiplication.
The equivalence class $[\rho]$ contains a representation that can be defined over the field $E_{[\rho]}$
and $E_{[\rho]}$ is the smallest such subfield of~$L$.
Moreover, $E_{[\rho]}$ is generated over~$K$ by the traces $\Tr(\rho(g))$ for $g \in G$
(and the values of the multiplier if $\rho$ is symplectic).
\end{enumerate}
\end{prop}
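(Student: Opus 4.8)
The plan is to treat the two parts separately, using the group-theoretic bookkeeping already set up together with the descent results of Proposition~\ref{prop:rep-theory}. For part~(\ref{prop:E-rho:a}), the key point is that $\Delta_{[\rho]}$ is normal in $\Gamma_{[\rho]}$. I would verify this directly: given $\gamma \in \Gamma_{[\rho]}$, say $(\gamma,\epsilon) \in \calG_{[\rho]}$, and $\delta \in \Delta_{[\rho]}$, so $(\delta,1) \in \calG_{[\rho]}$, one computes the conjugate $(\gamma,\epsilon)(\delta,1)(\gamma,\epsilon)^{-1}$ inside $\calG = \calE \rtimes \Gamma$ using the explicit multiplication law from Set-up~\ref{setup}. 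The $\calE$-component of this product collapses: $(\gamma,\epsilon)(\delta,1)(\gamma^{-1},{}^{\gamma^{-1}}(\epsilon^{-1}))$ has $\Gamma$-part $\gamma\delta\gamma^{-1}$ and one checks the character part is trivial (the $\epsilon$ contributions from the left and from the inverse cancel because $\delta$ acts trivially on $\calE$ modulo the relevant subgroup — more precisely, ${}^\delta(\text{anything})$ appearing is killed because the middle entry is~$1$). Hence $(\gamma\delta\gamma^{-1},1) \in \calG_{[\rho]}$, i.e.\ $\gamma\delta\gamma^{-1} \in \Delta_{[\rho]}$. Openness of $\Delta_{[\rho]}$ and $\Gamma_{[\rho]}$ is Lemma~\ref{lem:topology}, and then Galois theory gives that $E_{[\rho]}/K_{[\rho]}$ is finite Galois with group $\Gamma_{[\rho]}/\Delta_{[\rho]}$; the last sentence follows since $\Gamma_{[\rho]}$ stabilises $E_{[\rho]}$ by normality of $\Delta_{[\rho]}$.

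For part~(\ref{prop:E-rho:b}), I would first produce a good representative. By Proposition~\ref{prop:K-rho} we may assume ${}^\gamma\rho = \rho\otimes\epsilon$ for all $(\gamma,\epsilon)\in\calG_{[\rho]}$. For $\delta \in \Delta_{[\rho]}$ we then have ${}^\delta\rho = \rho$, so every matrix entry of $\rho(g)$ is fixed by $\Delta_{[\rho]}$, i.e.\ $\rho$ itself already has image in $\GL_n(E_{[\rho]})$ (or $\GSp_n(E_{[\rho]})$). This shows $[\rho]$ is definable over $E_{[\rho]}$. For minimality, suppose $[\rho]$ has a representative $\rho''$ with image in $\GL_n(\tilde K)$ (resp.\ $\GSp_n(\tilde K)$) for $K \subseteq \tilde K \subseteq L$, and let $U \subseteq \Gamma$ be the closed subgroup with $L^U = \tilde K$. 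Then for $\gamma \in U$ we have ${}^\gamma\rho'' = \rho''$, hence ${}^\gamma\rho \sim \rho$ trivially (they are literally equal after the conjugation matching $\rho$ and $\rho''$), so $\gamma$ gives an inner twist with $\epsilon = 1$: the cleanest route is to invoke the trace/multiplier characterisation (Lemma~\ref{lem:G-on-traces}) — $\gamma$ fixes all $\Tr(\rho(g))$ and all multiplier values, so $(\gamma,1) \in \calG_{[\rho]}$ by Lemma~\ref{lem:G-on-traces} in the symplectic no-CM case (this is where the no-CM hypothesis is used) or by Proposition~\ref{prop:rep-theory}~(\ref{prop:rep-theory:b}) in the $\GL$ case — whence $\gamma \in \Delta_{[\rho]}$ and $U \subseteq \Delta_{[\rho]}$, i.e.\ $E_{[\rho]} \subseteq \tilde K$. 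Finally, let $F$ be the field generated over $K$ by $\{\Tr(\rho(g)) : g \in G\}$ (and the multiplier values in the symplectic case); by Proposition~\ref{prop:rep-theory}~(\ref{prop:rep-theory:d}),(\ref{prop:rep-theory:e}), $[\rho]$ is definable over $F$, so $E_{[\rho]} \subseteq F$ by minimality, while conversely all these traces and multiplier values lie in $E_{[\rho]}$ since $\rho$ is realised over $E_{[\rho]}$; hence $E_{[\rho]} = F$.

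The main obstacle is the minimality direction in~(\ref{prop:E-rho:b}): one must convert "$\rho''$ has entries in $\tilde K$" into "$\gamma \in U$ acts trivially on $[\rho]$ with trivial twisting character", and the subtlety is that a priori $\gamma$ could fix $[\rho]$ via a nontrivial $\epsilon$ that happens to be compensated by conjugation — this is exactly what the no-CM assumption (equivalently, $\calE_{[\rho]} = \{1\}$, or in the $\GL$ case the rigidity from Brauer–Nesbitt) rules out, forcing $\epsilon = 1$ and $\gamma \in \Delta_{[\rho]}$. In the $\GL$ case without a CM hypothesis one should double-check that the displayed assumption on $\det\rho$ and the $n$-th roots of $\psi$-values being in $E_{[\rho]}$ is precisely what is needed to run Lemma~\ref{lem:order}~(\ref{lem:order:a}) and still conclude definability over $E_{[\rho]}$; I would make sure that hypothesis is invoked exactly at the point where one argues $\rho$ (not merely $\rho^{\mathrm{proj}}$) descends, since there the determinant twist must be accounted for. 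Everything else is routine manipulation in $\calE \rtimes \Gamma$ and standard Galois theory.
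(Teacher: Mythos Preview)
Your argument for part~(\ref{prop:E-rho:a}) has a real gap. Carrying out the conjugation in~$\calG$ gives
\[
(\gamma,\epsilon)(\delta,1)(\gamma,\epsilon)^{-1} \;=\; \bigl(\gamma\delta\gamma^{-1},\, ({}^{\gamma\delta\gamma^{-1}}\epsilon^{-1})\,\epsilon\bigr),
\]
and the $\calE$-component is \emph{not} automatically trivial merely because the middle factor has character part~$1$: in a general semidirect product, conjugating an element with trivial normal part by one with nontrivial normal part typically produces a nontrivial normal part. What is actually required is ${}^{\gamma\delta\gamma^{-1}}\epsilon = \epsilon$, i.e.\ that $\gamma\delta\gamma^{-1}$ fixes every value of~$\epsilon$. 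This is precisely where the hypotheses on $\det\rho$ (resp.\ $m(\rho)$) and Lemma~\ref{lem:order} enter --- and you never invoke them in~(\ref{prop:E-rho:a}). In the symplectic case, for instance: since $(\delta,1)\in\calG_{[\rho]}$ forces ${}^\delta\rho\sim\rho$, every $\delta\in\Delta_{[\rho]}$ fixes the multiplier and hence fixes~$\psi$; thus $\Delta_{[\rho]}$ fixes $K(\psi)$ pointwise. By Lemma~\ref{lem:order}~(\ref{lem:order:b}) the values of~$\epsilon$ lie in $K(\psi)$, which is Galois over~$K$ (being generated by roots of unity), so $\gamma\delta\gamma^{-1}$ acts on $K(\psi)$ as $\gamma\circ\id\circ\gamma^{-1}=\id$, yielding the needed equality. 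The $n$-th-roots hypothesis in the $\GL$ case plays the analogous role via Lemma~\ref{lem:order}~(\ref{lem:order:a}). Without this step the normality claim is unproved.

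A smaller issue in~(\ref{prop:E-rho:b}): you invoke Proposition~\ref{prop:K-rho} to get a representative with ${}^\gamma\rho = \rho\otimes\epsilon$ on the nose, but that proposition assumes $\rho|_{I_{[\rho]}}$ is (residually) absolutely irreducible, which is strictly stronger than what is assumed here. You do not need it: the relation ${}^\delta\rho\sim\rho$ for $\delta\in\Delta_{[\rho]}$ already gives $\Tr(\rho(g))\in E_{[\rho]}$ (and likewise for the multiplier), and then Proposition~\ref{prop:rep-theory}~(\ref{prop:rep-theory:d}),(\ref{prop:rep-theory:e}) descends a member of~$[\rho]$ to~$E_{[\rho]}$ directly. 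Your minimality and trace-generation arguments after that point are correct and essentially coincide with the paper's.
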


\begin{proof}
(\ref{prop:E-rho:a}) Note that, if $[\rho]$ is symplectic, for all $\delta\in \Delta_{[\rho]}$,  $\delta(m(\rho))=m(\rho)$, whence $E_{[\rho]}$ contains the values of~$\psi$.
Let $\delta \in \Delta_{[\rho]}$ and $\gamma \in \Gamma_{[\rho]}$. There is $\epsilon \in \calE$
such that $(\gamma,\epsilon) \in \calG_{[\rho]}$.  Taking into account that $E_{[\rho]}$ contains the values
of~$\epsilon$ by Lemma~\ref{lem:order}, we compute
\begin{equation*} (\gamma,\epsilon)(\delta,1)(\gamma,\epsilon)^{-1} = (\gamma\delta\gamma^{-1}, ({}^{\gamma\delta\gamma^{-1}}\epsilon^{-1})\epsilon)=
(\gamma\delta\gamma^{-1},1) \in \calG_{[\rho]},\end{equation*}
whence $\gamma \delta \gamma^{-1} \in \Delta_{[\rho]}$. 
The finiteness follows from the fact that $\Delta_{[\rho]}$ has finite index in~$\Gamma$, which was
proved in Lemma~\ref{lem:topology}.

(\ref{prop:E-rho:b}) Lemma~\ref{lem:charpoly-coeff} implies that the character of $\rho$ takes values in~$E_{[\rho]}$,
i.e.\ the traces lie in~$E_{[\rho]}$. If $\rho$ is symplectic, then the multiplier of~$\rho$ also
takes values in~$E_{[\rho]}$.
Due to the (residual) absolute irreducibility, $[\rho]$ can thus be defined over~$E_{[\rho]}$ by Proposition~\ref{prop:rep-theory} (\ref{prop:rep-theory:d}) and~(\ref{prop:rep-theory:e}).
If the traces of $\rho$ (together with the values of the multiplier for symplectic~$\rho$)
generated a proper subfield $\tilde{K}$ of $E_{[\rho]}$, then $[\rho]$ could be defined over~$\tilde{K}$.
Then, however, there would be a $\gamma \in \Gamma \setminus \Delta_{[\rho]}$ such that
$\gamma$ is the identity on~$\tilde{K}$, and $\gamma$ would fix all traces. This would mean ${}^\gamma \rho \sim \rho$ (by Proposition~\ref{prop:rep-theory} (\ref{prop:rep-theory:b}) and~(\ref{prop:rep-theory:c})),
so that $(\gamma,1) \in \calG_{[\rho]}$, consequently, $\gamma \in \Delta_{[\rho]}$, a contradiction.
\end{proof}

\section{Inner twists of a Galois representation}\label{sec:inner-galois}

In this short section we analyse the ramification of the characters occuring in inner twists.
Let $F$ be a number field and let $G := G_F := \Gal(\overline{F}/F)$ in Set-up~\ref{setup}, which
we assume for this section.

\begin{lem}\label{lem:unram}
Let $[\rho] \in \GLReps_n(G_F,L/K)$ or $[\rho] \in \GSpReps_n(G_F,L/K)$.
Then any $\epsilon$ occuring in $(\gamma,\epsilon) \in \calG_{[\rho]}$
is unramified at all places of~$F$ at which $\rho$ is unramified.
\end{lem}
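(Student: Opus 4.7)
The plan is to exploit directly the defining relation of $\calG_{[\rho]}$, namely that $(\gamma,\epsilon) \in \calG_{[\rho]}$ means ${}^\gamma\rho \sim \rho \otimes \epsilon$, and to restrict this equivalence to the inertia subgroup at an unramified place. Fix a place $v$ of $F$ at which $\rho$ is unramified and let $I_v \subseteq G_F$ denote an inertia subgroup at~$v$. Since the action of $\gamma \in \Gamma$ on matrices is entry-wise and preserves $I_n$, we have $\ker(\rho) = \ker({}^\gamma\rho)$; in particular ${}^\gamma\rho$ is also trivial on~$I_v$.

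Next, pick a matrix $M \in \GL_n(L)$ (or $\GSp_n(L)$) witnessing the equivalence, so that ${}^\gamma\rho(g) = M\bigl(\rho(g)\epsilon(g)\bigr)M^{-1}$ for every $g \in G_F$. For $g \in I_v$ the left hand side equals $I_n$, while the right hand side equals $M\bigl(\epsilon(g)I_n\bigr)M^{-1} = \epsilon(g)I_n$. Comparing gives $\epsilon(g) = 1$ for all $g \in I_v$, which is precisely the statement that $\epsilon$ is unramified at~$v$.

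I do not foresee any real obstacle: the argument is essentially a one-line calculation once one restricts the inner twist equation to inertia, and it uses nothing beyond the definition of $\calG_{[\rho]}$ and the trivial observation that $\gamma$ acts as the identity on the scalar matrix~$I_n$. In particular, no irreducibility hypothesis on~$\rho$ is needed for this step.
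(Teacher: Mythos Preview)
Your proof is correct and follows essentially the same approach as the paper: both argue that if $\rho(\sigma)$ is the identity then so is ${}^\gamma\rho(\sigma)$, and then the inner-twist relation ${}^\gamma\rho \sim \rho\otimes\epsilon$ forces $\epsilon(\sigma)=1$, applied to $\sigma$ in the inertia group at an unramified place. Your version is slightly more explicit in writing out the conjugating matrix~$M$, but the idea is identical.
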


\begin{proof}
Let $\sigma$ be an element such that $\rho(\sigma)$ is the identity. Then ${}^\gamma\rho(\sigma)$
is also the identity, hence, ${}^\gamma\rho \cong \rho \otimes \epsilon$ implies that
$\epsilon(\sigma)=1$. Now apply this reasoning to $\sigma$ in inertia groups at primes at which
$\rho$ does not ramify.
\end{proof}

\begin{ass}\label{ass:shape}
Let $K$ be a finite field of characteristic~$\ell$ or $\Fbar_\ell$ and
$\rho: G \to \GL_n(L)$.
For a prime $\fL$ of~$F$ above~$\ell$ we make the assumption:

There exist an integer~$t$ and an integer~$s$ between $1$ and $n$,
and for each $i=1, \dots, s$,
an $r_i$-tuple $S_i = (a_{i,1},\dots,a_{i,r_i})$ of natural numbers $0 \le a_{i,j} \le \ell-1$
with $r_1 + \cdots + r_s=n$ such that,
if we denote by $B_i$ the matrix,
$$B_i\sim \begin{pmatrix} \psi_{r_i}^{b_i} & \ & \ & 0 \\
\ & \psi_{r_i}^{b_i\ell} & \ & \ \\ \ & \ & \ddots & \ \\ 0 & \ & \
& \psi_{r_i}^{b_i\ell^{r_i-1}}\end{pmatrix}$$
with $\psi_{r_i}$ a (fixed choice of) fundamental character of niveau~$r_i$
(so that $\psi_1$ is the mod-$\ell$ cyclotomic character) and
$b_i=a_{i, 1} + a_{i, 2}\ell + \cdots + a_{i, r_i}\ell^{r_i-1}$,
then
$$(\rho \otimes \psi_1^{t})\vert_{I_{\fL}}\sim\left(\begin{array}{c  c  c }
B_1 & \multicolumn{1}{|c}{} & * \\
\cline{1-1}
\  & \ddots & \ \\
\cline{3-3}
0 & \ & \multicolumn{1}{|c}{B_s} \\
\end{array}\right).$$
\end{ass}

It is well known that Assumption~\ref{ass:shape} is verified if $F_\fL = \QQ_\ell$.

\begin{prop}\label{prop:epsilon-residual}
Let $K$ be a finite field of characteristic~$\ell$ or $\Fbar_\ell$
and $[\rho] \in \GLReps_n(G, L/K)$ or $[\rho] \in \GSpReps_n(G,L/K)$.
Let $\fL$ be a prime of~$F$ above~$\ell$ such that Assumption~\ref{ass:shape} is verified.
If $a_{i,j} < \frac{\ell-1}{2n}$ for all $1 \le i \le s$ and all $1 \le j \le r_i$,
then the character $\epsilon$ is unramified at~$\fL$ for all $(\gamma,\epsilon) \in \calG_{[\rho]}$.
\end{prop}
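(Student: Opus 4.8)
The plan is to reduce the statement to a computation on the restriction of $\rho$ to the inertia group $I_\fL$, using that an inner twist character $\epsilon$ from $(\gamma,\epsilon) \in \calG_{[\rho]}$ is tightly constrained by Lemma~\ref{lem:charpoly-coeff} applied to the semisimplification of $\rho|_{I_\fL}$. First I would pass to $\rho' := \rho \otimes \psi_1^t$, which by Lemma~\ref{lem:proj-char} does not change any of the relevant data: the groups $\calG_{[\rho]}$, $\Gamma_{[\rho]}$, and whether a given $\epsilon$ is ramified at $\fL$ are all unaffected by twisting by the power of the cyclotomic character (note $\psi_1$ takes values in the prime field, so ${}^\gamma \psi_1 = \psi_1$). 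So it suffices to show $\epsilon|_{I_\fL}$ is trivial assuming $\rho'|_{I_\fL}$ has the block-upper-triangular shape of Assumption~\ref{ass:shape}. Since $\epsilon$ restricted to $I_\fL$ is a character of the (pro-cyclic, tame since it will factor through the tame quotient once we know it has order prime to... actually one must be a bit careful here) inertia, and since the diagonal characters on $I_\fL$ appearing in the $B_i$ are, via the fundamental characters of niveau $r_i$, the relevant objects, the key is that for $(\gamma,\epsilon) \in \calG_{[\rho]}$ we have an equation relating $\gamma$ acting on the characteristic polynomial coefficients of $\rho'(g)$ (for $g \in I_\fL$) to $\epsilon(g)$, and these coefficients are expressible in the fundamental characters with small exponents $a_{i,j}$.

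The core computation is the following. By Lemma~\ref{lem:charpoly-coeff}, for $g \in I_\fL$ and each $i$, $\gamma(a_i(g)) = \epsilon(g)^i a_i(g)$. The characteristic polynomial of $\rho'|_{I_\fL}$ is (up to the unipotent part, which does not affect the characteristic polynomial) $\prod_{i=1}^s \prod_{j=0}^{r_i-1}(X - \psi_{r_i}^{b_i \ell^j}(g))$. Its coefficients are, up to sign, the elementary symmetric functions in the $\psi_{r_i}^{b_i \ell^j}$, hence sums of monomials of the form $\prod \psi_{r_i}^{b_i \ell^{j}}$ over subsets; restricting $\psi_{r_i}$ to wild inertia is trivial, and on the tame quotient $\psi_{r_i}^{b_i}$ is a power of a character whose "weight" (sum of the base-$\ell$ digits $a_{i,j}$) is strictly less than $\frac{\ell-1}{2n}$ in each digit. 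The point is to choose $g$ in inertia such that $\epsilon(g)$ has the largest possible order; writing $\epsilon(g)$ as a root of unity and comparing with the identity $\gamma(a_i(g)) = \epsilon(g)^i a_i(g)$, the bound $a_{i,j} < \frac{\ell-1}{2n}$ forces, by a counting argument on exponents of $\psi_1$ (niveau-one reduction of the $\psi_{r_i}$ via norm), that $\epsilon(g)^i = 1$ whenever $a_i(g) \neq 0$; and one argues that for a generator $g$ of tame inertia at least one $a_i(g) \neq 0$ for $i$ coprime to the order of $\epsilon|_{I_\fL}$, which combined with $\ord(\epsilon) \mid n$ (Corollary~\ref{cor:charpoly-coeff}) and the strict inequality gives triviality.

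The main obstacle I anticipate is the bookkeeping in the last step: extracting from the system of equations $\gamma(a_i(g)) = \epsilon(g)^i a_i(g)$ — where the $a_i(g)$ live in a small cyclotomic extension of $\FF_\ell$ generated by $\ell^r-1$-th roots of unity — the conclusion that $\epsilon$ is unramified, i.e.\ that one cannot have a nontrivial tame character $\epsilon$ of order dividing $n$ satisfying all these constraints simultaneously when all exponents $a_{i,j}$ are below $\frac{\ell-1}{2n}$. The "$\frac{1}{2n}$" strongly suggests the argument goes: the total weight appearing in any $a_i(g)$ is at most $n \cdot \frac{\ell-1}{2n} = \frac{\ell-1}{2}$, so distinct such weights (one from $a_i(g)$, one from the Galois-twisted side, whose weight is scaled by the action of $\gamma$ on roots of unity, i.e.\ by some multiplier coprime to $\ell$) cannot collide modulo $\ell-1$ unless $\epsilon(g) = 1$; making this precise — which fundamental characters, which niveau, how $\gamma$ acts — is where the care is needed. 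I would structure the final paragraph of the proof as: reduce to tame inertia, write everything in terms of a fixed fundamental character $\psi_r$ of the appropriate niveau $r = \mathrm{lcm}(r_i)$, express $a_i(g)$ and $\gamma(a_i(g))$ as powers of $\psi_r(g)$ with exponents bounded by $\frac{\ell-1}{2}$ in absolute value (in a suitable symmetric residue system), and conclude that the exponent of $\epsilon(g)^i$ must be zero.
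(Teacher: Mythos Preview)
Your setup is right --- twist away $\psi_1^t$, restrict to inertia, and use that ${}^\gamma\rho \sim \rho \otimes \epsilon$ constrains $\epsilon|_{I_\fL}$ via the shape in Assumption~\ref{ass:shape}. But the execution via Lemma~\ref{lem:charpoly-coeff} and the coefficients $a_i(g)$ has a real gap. Each $a_i(g)$ is an elementary symmetric function in the $\psi_{r_i}^{b_i\ell^j}(g)$, hence a \emph{sum} of monomials, not a single power of a fundamental character; so your later plan to ``express $a_i(g)$ as powers of $\psi_r(g)$ with exponents bounded by $\frac{\ell-1}{2}$'' does not make sense as stated, and the step ``for a generator $g$ of tame inertia at least one $a_i(g)\neq 0$ for $i$ coprime to the order of $\epsilon|_{I_\fL}$'' is exactly the kind of non-vanishing claim that can fail when sums of roots of unity cancel. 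You acknowledge this as the main obstacle, and it is one: there is no clean way to push the argument through the $a_i(g)$ individually. (A smaller point: Corollary~\ref{cor:charpoly-coeff} only gives $\ord(\epsilon)\mid n$ for $\epsilon \in \calE_{[\rho]}$, i.e.\ $\gamma=1$; for general $(\gamma,\epsilon)$ you need the determinant argument, using that $\det(\rho')|_{I_\fL}$ is a power of $\psi_1$ and hence $\gamma$-fixed.)

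The paper's argument bypasses the coefficients entirely and compares the multisets of diagonal characters on $I_\fL$ directly. Since $\gamma$ acts on $\kappa(L_\lambda)$ as $x\mapsto x^{\ell^c}$, one has ${}^\gamma\psi_{r_i} = \psi_{r_i}^{\ell^c}$, so ${}^\gamma(\rho'|_{I_\fL})^{\mathrm{ss}}$ has the same diagonal characters as $(\rho'|_{I_\fL})^{\mathrm{ss}}$ up to permutation within each block. The equivalence ${}^\gamma\rho \sim \rho\otimes\epsilon$ then forces an equality of \emph{characters} (not merely values at one $g$): some $\psi_{r_i}^x$ equals some $\psi_{r_j}^y \cdot \epsilon|_{I_\fL}$. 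Passing to $\psi_r$ with $r=\mathrm{lcm}(r_i,r_j)$ gives $\psi_r^{\tilde x} = \psi_r^{\tilde y}\cdot \epsilon|_{I_\fL}$ with $0\le \tilde x,\tilde y < \frac{\ell^r-1}{2n}$, while if $\epsilon|_{I_\fL}$ has order $m\mid n$, $m>1$, then $\epsilon|_{I_\fL}=\psi_r^{\frac{\ell^r-1}{m}b}$ with $\frac{\ell^r-1}{n}\le \frac{\ell^r-1}{m}b \le \frac{(n-1)(\ell^r-1)}{n}$. These ranges are incompatible modulo $\ell^r-1$. This is the missing idea: work with the eigenvalue \emph{characters} on $I_\fL$ rather than with the symmetric-function coefficients evaluated at points.
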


\begin{proof}
First note that the restriction to $I_\fL$ of the determinant of~$\rho$
is $\chi_\ell^{b_1+\dots+b_s-t}$.
For any exponent~$x$ of $\psi_{r_i}$ occuring in~$B_i$ for any $i \in \{1,\dots,s\}$
we have the 
$$ 0 \le x < \frac{\ell^{r_i}-1}{2n},$$
as each exponent $x$ is of the form $\sum_{j=1}^{r_i} a^{(j)}
\ell^{j-1}$, where the $a^{(j)}$ are the entries of $S_i$ up to a cyclic permutation and
they are less than $\frac{\ell-1}{2n}$.

Let $(\gamma,\epsilon) \in \calG_{[\rho]}$.
We first consider the action of~$\gamma$. As $K$ is a finite field, $\gamma$ acts by raising
to the $\ell^c$-th power for some~$c$. In particular ${}^\gamma \psi_{r_i} = \psi_{r_i}^{\ell^c}$.
This shows that ${}^\gamma (\rho \otimes \psi_1^t) |_{I_\fL}$ has the same shape as
$(\rho \otimes \psi_1^t) |_{I_\fL}$ except that the elements of each $S_i$ are permuted.
Taking the determinant on both sides of ${}^\gamma \rho \cong \rho \otimes \epsilon$
yields that $\epsilon|_{I_\fL}$ has order~$m$ dividing~$n$,
as $\gamma$ acts trivially on the cyclotomic character.
Moreover, looking at any diagonal entry we get
$\psi_{r_i}^x = \psi_{r_j}^y \cdot \epsilon|_{I_\fL}$
for some exponents $x$ and $y$.
Let $r$ be the least common multiple of $r_i$ and $r_j$. Then we can rewrite the
previous equation as
\begin{equation}\label{eq:psi}
\psi_r^{\tilde{x}} = \psi_r^{\tilde{y}} \cdot \epsilon|_{I_\fL}
\end{equation}
with $0 \le \tilde{x} < \frac{\ell^r-1}{2n}$, since
$\psi_r=\psi_{r_i}^{\frac{\ell^r-1}{\ell^{r_i}-1}}$ and hence
$\tilde{x}=x\frac{\ell^r-1}{\ell^{r_i}-1}<\frac{\ell^{r_i}-1}{2n}\frac{\ell^r-1}{\ell^{r_i}-1}=
\frac{\ell^{r}-1}{2n}$.
The same estimate holds true for~$\tilde{y}$.

Furthermore, the knowledge of the order of $\epsilon|_{I_\fL}$
implies $\epsilon|_{I_\fL} = \psi_r^{\frac{\ell^r-1}{m}b}$
for some $1 \le b\le m-1$ coprime to~$m$.
Note that this implies the estimates
$$ \frac{\ell^r-1}{n} \le \frac{\ell^r-1}{m} \le \frac{\ell^r-1}{m}b \le
\frac{(m-1)(\ell^r-1)}{m} \le \frac{(n-1)(\ell^r-1)}{n}.$$
But, since $\psi_r$ has order $\ell^r-1$, Equation \eqref{eq:psi} implies that $\frac{(\ell^r-1)b}{m} +\tilde{y}-\tilde{x}$ is divisible by $\ell^r-1$, hence we get a contradiction. 
Thus $\epsilon|_{I_\fL}$ is trivial, as was to be shown.
\end{proof}

\section{Inner twists in compatible systems of Galois representations}\label{sec:compatible}

In this section we aim to generalise the results of Section~\ref{sec:inner-group} to
compatible systems of Galois representations. For a compatible system $\rho_\bullet$
we will define number fields $E_{\rho_\bullet}$ and $K_{\rho_\bullet}$ such that almost
all members $\rho_\lambda$ and their projectivisation $\rho_\lambda^\proj$
can be defined over the completion at $\lambda$ of $E_{\rho_\bullet}$ and $K_{\rho_\bullet}$,
respectively.
The example of a non-CM newform $f$ already used in Section~\ref{sec:inner-group} can serve as
a guidance to the reader. The field $E_{\rho_\bullet}$ is then
$\QQ_f = \QQ(a_p \;|\; p \textnormal{ prime }, p \nmid N)$
and $K_{\rho_\bullet}$ is $\QQ(\frac{a_p^2}{\psi(p)} \;|\; p \textnormal{ prime }, p \nmid N)$,
where $N$ is the level and $\psi$ the Dirichlet character of~$f$.

\begin{defi}\label{defi:compatible}
Let $n \in \NN$ and let $G := G_F:= \Gal(\overline{F}/F)$ be the absolute Galois group of a number field~$F$.
A {\em compatible system $\rho_\bullet = (\rho_\lambda)_\lambda$
of $n$-dimensional representations of $G_F$} consists of the following data:
\begin{itemize}
\itemsep=0cm plus 0pt minus 0pt
\parsep=0.1cm
\item A number field $L$.
\item A finite set $S$ of finite places of~$F$.
\item For each finite place $\fp$ of~$F$ not in~$S$, a monic polynomial $P_\fp \in \calO_L[X]$
(with $\calO_L$ the ring of integers of~$L$).
\item For each finite place $\lambda$ of~$L$
(together with fixed embeddings $L \hookrightarrow L_\lambda \hookrightarrow \overline{L}_\lambda$
with $L_\lambda$ the completion of $L$ at~$\lambda$ and $\overline{L}_\lambda$ an
algebraic closure thereof) a continuous Galois representation
$$ \rho_\lambda: G_F \to \GL_n(\overline{L}_\lambda)$$
such that $\rho_\lambda$ is unramified outside $S \cup S_\ell$
(where $\ell$ is the residue characteristic of~$\lambda$ and $S_\ell$
is the set places of~$F$ lying above~$\ell$)
and such that for all~$\fp \not\in S \cup S_\ell$ the characteristic
polynomial of $\rho_\lambda(\Frob_\fp)$ is equal to $P_\fp$ (equality inside $\overline{L}_\lambda[X]$).
\end{itemize}

We say that the compatible system $\rho_\bullet = (\rho_\lambda)_\lambda$ is {\em symplectic}
if for all $\lambda$ the representation $\rho_\lambda$ is of the form $G_F \to \GSp_n(\overline{L}_\lambda)$.
We say that $\rho_\bullet = (\rho_\lambda)_\lambda$ is {\em almost everywhere (a.~e.) symplectic}
if for all but possibly finitely many $\lambda$ the representation $\rho_\lambda$
is of the form $G_F \to \GSp_n(\overline{L}_\lambda)$.

Let $a \in \ZZ$ and $\psi: G_F \to L^\times$ a continuous finite order character.
We say that the compatible system $\rho_\bullet=(\rho_\lambda)_\lambda$ has {\em determinant}
$\psi \chi_\ell^a$ if for all places $\lambda$ of~$L$ (say, $\lambda$ lies over the
rational prime~$\ell$) the determinant of $\rho_\lambda$
is $\psi \chi_\ell^a$ with $\chi_\ell$ the $\ell$-adic cyclotomic character.

We say that a symplectic compatible system $\rho_\bullet=(\rho_\lambda)_\lambda$ has {\em multiplier map}
$\psi \chi_\ell^a$ if for all places $\lambda$ of~$L$ (say, $\lambda$ lies over the
rational prime~$\ell$) the multiplier map of $\rho_\lambda$
is $\psi \chi_\ell^a$ with $\chi_\ell$ the $\ell$-adic cyclotomic character.
In the case of an almost everywhere symplectic compatible system, we only insist on this property
at those places~$\lambda$, where the representation is symplectic.

A compatible system $\rho_\bullet = (\rho_\lambda)_\lambda$ is called {\em almost everywhere (a.~e.) absolutely irreducible}
if all its members $\rho_\lambda$ are absolutely irreducible except for finitely many places $\lambda$ of $L$.

For a place $\fp$ of~$F$ not in~$S$, denote by $a_\fp$ the coefficient in front of $X^{n-1}$
of $P_\fp(X)$, i.e.\ the  `trace' (up to sign).
For the sequel, assume Set-up~\ref{setup} for some number field $K$, and assume that $L/K$ is a finite Galois extension of number fields with Galois group $\Gamma$. Note that, in this context, $\Gamma$, $L$ and $K$ are all endowed with the discrete topology. 
Let
$$\calG_{\rho_\bullet} := \{(\gamma,\epsilon) \in \calG \;|\; \gamma (a_\fp) = a_\fp \cdot \epsilon(\Frob_\fp)
\textnormal{ for all places $\fp$ of~$F$ not in~$S$} \},$$
$\Gamma_{\rho_\bullet} := \pi(\calG_{\rho_\bullet}) \subseteq \Gamma$,
$\Delta_{\rho_\bullet} := \{ \gamma \in \Gamma_{\rho_\bullet} \;|\; (\gamma,1) \in \calG_{\rho_\bullet} \}$ and
$\calE_{\rho_\bullet} := \iota^{-1}(\calG_{\rho_\bullet}) = \iota^{-1}(\ker(\pi|_{\calG_{\rho_\bullet}}))$.
(Compare with Definition~\ref{defi:main} and Lemma~\ref{lem:G-on-traces}).
We say that the compatible system $\rho_\bullet$ has {\em no complex multiplication} if
$\calE_{\rho_\bullet} = \{1\}$.

Let $K_{\rho_\bullet} := L^{\Gamma_{\rho_\bullet}}$, called the {\em projective field of definition of~$\rho_\bullet$},
and $E_{\rho_\bullet} := L^{\Delta_{\rho_\bullet}}$, called the {\em field of definition of~$\rho_\bullet$}. Note that $E_{\rho_{\bullet}}$ is generated over~$K$
by the set $\{a_{\mathfrak{p}} \;|\; \mathfrak{p} \text{ place of }F\text{ not in }S\}$.
\end{defi}

\begin{rem}\label{rem:noCM} Let  $\rho_{\bullet}$ be an a.~e.\ symplectic compatible system of $n$-dimensional Galois representations without complex multiplication, and let $\rho_{\lambda}\in \rho_{\bullet}$ be a symplectic member of the system. Then $\rho_{\lambda}$ has no complex multiplication. Indeed, if $\epsilon:G_F\rightarrow \overline{L}_{\lambda}^{\times}$ is an inner twist for $[\rho_{\lambda}]\in \GSpReps_n(G_F, \overline{L}_{\lambda}/K_{\lambda})$, then $m(\rho_{\lambda})=m(\rho_{\lambda}\otimes \epsilon)=m(\rho_{\lambda})\epsilon^2$, hence $\epsilon:G_F\rightarrow \overline{L}_{\lambda}^{\times}$ is at most quadratic. Therefore we may view it as a character $\epsilon:G_F\rightarrow L^{\times}$, and it satisfies $a_{\mathfrak{p}} = a_{\mathfrak{p}} \cdot \epsilon(\Frob_{\mathfrak{p}})$ (equality in $L$) for all $\mathfrak{p}\not\in S$. Thus $\epsilon\in \mathcal{E}_{\rho_{\bullet}}=1$ because $\rho_{\bullet}$ has no complex multiplication. This implies that $\rho_{\lambda}$ does not have complex 
multiplication. 
\end{rem}

Let $\rho_\bullet = (\rho_\lambda)_\lambda$ be an a.~e.\ absolutely irreducible compatible system.
For a place $\lambda$ of~$L$ we can consider Set-up~\ref{setup} for the Galois extension
$\overline{L}_\lambda/K_\lambda$.  
If $\rho_{\lambda}\in \rho_{\bullet}$ is residually absolutely irreducible (and symplectic), by Proposition \ref{prop:rep-theory}(\ref{prop:rep-theory:d}) (Proposition \ref{prop:rep-theory}(\ref{prop:rep-theory:e}) and Remark \ref{rem:noCM}) the equivalence class of $\rho_{\lambda}$ contains a member that is defined over $L_{\lambda}$. Thus we can consider Set-up~\ref{setup} for the Galois extension
$L_\lambda/K_\lambda$. In this case we will 
use the notation $\Gamma_\lambda := \Gal(L_\lambda/K_\lambda)$
(denoting the prime of~$K$ below~$\lambda$ also by~$\lambda$),
and $\calE_\lambda := \{\epsilon: G_F \to L^\times_\lambda \textnormal{ character }\}$
and $\calG_\lambda := \calE_\lambda \rtimes \Gamma_\lambda$.
In this setting, apply Definition~\ref{defi:main} with $[\rho_\lambda]$, so that, in particular,
$\calG_{[\rho_\lambda]}$ is the stabiliser subgroup inside~$\calG_\lambda$ of~$[\rho_\lambda]$.

We start with the analog of Proposition~\ref{prop:E-rho} for compatible systems
of $\ell$-adic Galois representations; our main focus below will be to obtain results
about their residual representations.

\begin{prop}\label{prop:fieldE}
Let $\rho_\bullet$ be an a.~e.\ absolutely irreducible compatible system of $n$-dimensional Galois representations as in Definition~\ref{defi:compatible}.
If $\rho_{\bullet}$ is a.~e.\ symplectic, assume that the multiplier of $\rho_{\bullet}$ is $\psi\chi_{\ell}^a$ and that $\rho_\bullet$ has no complex multiplication.
If $\rho_{\bullet}$ is not a.~e.\ symplectic, assume that the determinant of $\rho_{\bullet}$ is $\psi\chi_{\ell}^a$ and that $E_{\rho_{\bullet}}$ contains the $n$-th roots of the values of~$\psi$.

\begin{enumerate}[(a)]
\item\label{prop:fieldE:a} $\Delta_{\rho_{\bullet}}$ is a normal subgroup of $\Gamma_{\rho_{\bullet}}$ and, hence, $E_{\rho_{\bullet}}/K_{\rho_{\bullet}}$ is a
Galois extension with Galois group $\Gamma_{\rho_{\bullet}}/\Delta_{\rho_{\bullet}}$.
In particular, the field $\gamma(E_{\rho_{\bullet}}) = E_{\rho_{\bullet}}$ for all $\gamma \in \Gamma_{\rho_{\bullet}}$.

\item\label{prop:fieldE:b} For each place $\lambda$ of $L$ such that $\rho_{\lambda}$ is residually absolutely irreducible
(and symplectic if $\rho_\bullet$ is a.~e.\ symplectic),
the equivalence class $[\rho_{\lambda}]$ contains a representation that can be defined over the field $(E_{\rho_{\bullet}})_\lambda$
and $(E_{\rho_{\bullet}})_\lambda$ is the smallest such field. 
\end{enumerate}
\end{prop}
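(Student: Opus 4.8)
\textbf{Proof proposal for Proposition~\ref{prop:fieldE}.}

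The plan is to deduce this from the corresponding statements for a single representation, namely Proposition~\ref{prop:E-rho} applied to $[\rho_\lambda]$ over the extension $L_\lambda/K_\lambda$, once we have matched the group $\Delta_{\rho_\bullet}$ (resp.\ $\Gamma_{\rho_\bullet}$) with its local counterpart $\Delta_{[\rho_\lambda]}$ (resp.\ $\Gamma_{[\rho_\lambda]}$) at almost all $\lambda$. The key bridge is the \v{C}ebotarev density theorem together with Lemma~\ref{lem:G-on-traces}: the latter characterises $\calG_{[\rho_\lambda]}$ purely in terms of the identity $\gamma(\Tr\rho_\lambda(g)) = \Tr\rho_\lambda(g)\,\epsilon(g)$ for all $g$, and since the Frobenius elements $\Frob_\fp$ for $\fp \notin S \cup S_\ell$ are dense in $G_F$ and have $\Tr\rho_\lambda(\Frob_\fp) = a_\fp \in L \subseteq L_\lambda$, a pair $(\gamma,\epsilon)$ lies in $\calG_{[\rho_\lambda]}$ if and only if $\gamma(a_\fp) = a_\fp\,\epsilon(\Frob_\fp)$ for all such $\fp$ (here one uses that $\epsilon$, being an inner twist of a symplectic representation with fixed finite-order-times-cyclotomic multiplier, is quadratic by Remark~\ref{rem:noCM}, hence visibly a character with values in $L^\times$, or in the non-symplectic case one uses Lemma~\ref{lem:order}~(\ref{lem:order:a}) to control the values of $\epsilon^n$ and the $n$-th-root hypothesis). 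This is exactly the defining condition for $\calG_{\rho_\bullet}$, with $\Gamma$ replaced by $\Gamma_\lambda = \Gal(L_\lambda/K_\lambda)$. So I would first establish: for every $\lambda$ such that $\rho_\lambda$ is residually absolutely irreducible (and symplectic, if applicable), the restriction map $\Gamma \to \Gamma_\lambda$ identifies $\calG_{[\rho_\lambda]}$ with the image of $\calG_{\rho_\bullet}$, hence $\Gamma_{[\rho_\lambda]}$ with the image of $\Gamma_{\rho_\bullet}$ and $\Delta_{[\rho_\lambda]}$ with the image of $\Delta_{\rho_\bullet}$.

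The subtlety is that $\Gamma_\lambda$ is only a \emph{subgroup} (the decomposition group) of $\Gamma = \Gal(L/K)$, not all of it, so the identification of subgroups is an identification of the traces of $\Gamma_{\rho_\bullet}$, $\Delta_{\rho_\bullet}$ on $\Gamma_\lambda$. To descend from $L_\lambda$ back to the number field $L$ I would invoke that $E_{\rho_\bullet}$ is generated over $K$ by the $a_\fp$ and that the completion $(E_{\rho_\bullet})_\lambda$ is correspondingly generated over $K_\lambda$ by those same $a_\fp$; and likewise for $K_{\rho_\bullet}$ one argues via the stabiliser description. Concretely, $(E_{\rho_\bullet})_\lambda = L_\lambda^{\,\text{im}(\Delta_{\rho_\bullet})}$ because $\Delta_{\rho_\bullet}$ is exactly the subgroup of $\Gamma$ fixing all $a_\fp$, and its image in $\Gamma_\lambda$ is exactly the subgroup fixing all $a_\fp$ viewed in $L_\lambda$, whose fixed field is the compositum $K_\lambda \cdot E_{\rho_\bullet} = (E_{\rho_\bullet})_\lambda$. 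Part~(\ref{prop:fieldE:a}) is then essentially formal: the normality of $\Delta_{\rho_\bullet}$ in $\Gamma_{\rho_\bullet}$ (hence the Galois property of $E_{\rho_\bullet}/K_{\rho_\bullet}$ with group $\Gamma_{\rho_\bullet}/\Delta_{\rho_\bullet}$) is proved by the same conjugation computation $(\gamma,\epsilon)(\delta,1)(\gamma,\epsilon)^{-1} = (\gamma\delta\gamma^{-1},1)$ as in Proposition~\ref{prop:E-rho}~(\ref{prop:E-rho:a}), using Lemma~\ref{lem:order} to know that $E_{\rho_\bullet}$ contains the values of every $\epsilon$ occurring in $\calG_{\rho_\bullet}$; the finiteness of $\Gamma_{\rho_\bullet}/\Delta_{\rho_\bullet}$ is automatic since $\Gamma$ itself is finite here.

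For part~(\ref{prop:fieldE:b}), granting the identification $\Delta_{[\rho_\lambda]} = \text{im}(\Delta_{\rho_\bullet})$ in $\Gamma_\lambda$, Proposition~\ref{prop:E-rho}~(\ref{prop:E-rho:b}) applied to $[\rho_\lambda]$ over $L_\lambda/K_\lambda$ (valid because $\rho_\lambda$ is residually absolutely irreducible and, in the symplectic case, has no complex multiplication by Remark~\ref{rem:noCM}) says that $[\rho_\lambda]$ is definable over $E_{[\rho_\lambda]} = L_\lambda^{\Delta_{[\rho_\lambda]}} = (E_{\rho_\bullet})_\lambda$ and over no smaller subfield of $L_\lambda$; this is precisely the claim. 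The main obstacle I anticipate is purely bookkeeping rather than conceptual: one must check that $\Frob_\fp$ for $\fp$ outside $S\cup S_\ell$ really do surject onto $\Gamma$ under $\rho_\lambda$-traces in a way that lets \v{C}ebotarev pin down the local inner-twist condition from the global one — in particular, that enlarging the excluded set from $S$ to $S\cup S_\ell$ when passing to the fixed $\lambda$ does not shrink the set of relevant traces, which holds because $S_\ell$ is finite and density arguments are insensitive to finitely many primes — and that the character-valued identities make sense integrally (values of $\epsilon$ lie in $L$, not merely $L_\lambda$), which is where the finite-order hypothesis on $\psi$ and Lemma~\ref{lem:order} are doing real work.
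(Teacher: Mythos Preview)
Your overall strategy is correct and close to the paper's, but there is one genuine gap and one minor slip worth flagging.

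\textbf{The gap (part (a), symplectic case).} You write that Lemma~\ref{lem:order} ensures $E_{\rho_\bullet}$ contains the values of every $\epsilon$ occurring in $\calG_{\rho_\bullet}$. But Lemma~\ref{lem:order}~(\ref{lem:order:b}) only says that the values of $\epsilon$ lie in the field generated over~$K$ by the values of~$\psi$; it does \emph{not} say this field is contained in $E_{\rho_\bullet}$. In the non-symplectic case this is handled by the explicit hypothesis on $n$-th roots, but in the symplectic case no such hypothesis is given, so you must prove that the values of~$\psi$ lie in~$E_{\rho_\bullet}$. The paper does this as a preliminary step: for $\delta\in\Delta_{\rho_\bullet}$ one has $\delta(a_\fp)=a_\fp$ for all $\fp\notin S$, so by Chebotarev and Proposition~\ref{prop:rep-theory}~(\ref{prop:rep-theory:c}) (using Remark~\ref{rem:noCM} to know $\rho_\lambda$ has no complex multiplication) one gets ${}^\delta\rho_\lambda\sim\rho_\lambda$ as \emph{symplectic} representations, whence $\delta(m(\rho_\lambda))=m(\rho_\lambda)$ and thus $\delta$ fixes the values of~$\psi$. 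Only then does the conjugation computation $(\gamma,\epsilon)(\delta,1)(\gamma,\epsilon)^{-1}=(\gamma\delta\gamma^{-1},1)$ go through.

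\textbf{A minor slip.} Your appeal to Remark~\ref{rem:noCM} to conclude ``$\epsilon$ is quadratic'' only applies when $\gamma=1$; for a general $(\gamma,\epsilon)\in\calG_{[\rho_\lambda]}$ one has $\epsilon^2=\frac{{}^\gamma\psi}{\psi}$, which need not be trivial. The correct reference to get $\epsilon$ valued in $L^\times$ is Lemma~\ref{lem:order}~(\ref{lem:order:b}) (this is essentially the content of Lemma~\ref{lem:compatible}).

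\textbf{Comparison with the paper's proof of (b).} Your route through the identification $\Delta_{[\rho_\lambda]}=\Delta_{\rho_\bullet}\cap\Gamma_\lambda$ followed by Proposition~\ref{prop:E-rho}~(\ref{prop:E-rho:b}) is valid and conceptually clean. The paper takes a more direct path: since every $a_\fp$ is fixed by $\Delta_{\rho_\bullet}$, the traces (and, by the argument above, the multiplier) already lie in $(E_{\rho_\bullet})_\lambda$, so Proposition~\ref{prop:rep-theory}~(\ref{prop:rep-theory:d}),(\ref{prop:rep-theory:e}) applies immediately; minimality then follows from the fact that $E_{\rho_\bullet}$ is generated by the~$a_\fp$. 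The two arguments have the same content, but the paper avoids the intermediate group-theoretic identification.
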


\begin{proof} (\ref{prop:fieldE:a})  
First of all, note that, if $\rho_{\bullet}$ is a.~e.\ symplectic, then
$E_{\rho_{\bullet}}$ contains the values of~$\psi$. Indeed, for all $\delta\in \Delta_{\rho_{\bullet}}$,
for all places $\lambda$ of $L$ fixed by~$\delta$ 
and all places $\fp$ of $F$ outside $S\cup S_{\ell}$
(where $\ell$ is the residue characteristic of $\lambda$), we have that $\Tr(\rho_{\lambda}(\Frob_{\mathfrak{p}}))=a_{\mathfrak{p}}
=\delta(a_{\mathfrak{p}})=\Tr({}^{\delta}\rho_{\lambda}(\Frob_{\mathfrak{p}}))$.
Here by abuse of notation we denote by $\delta$ any extension of $\delta$ to~$\overline{L}_\lambda$.
Suppose that $\rho_{\lambda}$ is absolutely irreducible and symplectic. Then by Remark \ref{rem:noCM} $\rho_{\lambda}$ does not have complex multiplication.
Moreover, by Chebotarev's Density Theorem and Proposition~\ref{prop:rep-theory}~(\ref{prop:rep-theory:c}),
$\rho_{\lambda}$ and ${}^{\delta}\rho_{\lambda}$ are equivalent as symplectic representations.
Hence $\delta(m(\rho_\lambda)) = m({}^{\delta}\rho_{\lambda})=m(\rho_{\lambda})$ for all except
possibly finitely many places $\lambda\in L$.
Since the multiplier of $\rho_{\bullet}$ is $\psi\chi_{\ell}^a$,
we obtain that the values of~$\psi$ are fixed by $\delta$, thus they lie in~$E_{\rho_{\bullet}}$.

This allows us to compute $(\gamma \epsilon)(\delta, 1)(\gamma, \epsilon)^{-1}=(\gamma\delta\gamma^{-1}, 1)\in \mathcal{G}_{\bullet}$ as in  Proposition~\ref{prop:E-rho}~(\ref{prop:E-rho:a}) for all $\gamma\in \Gamma_{\rho_{\bullet}}$ and $\delta\in \Delta_{\rho_{\bullet}}$, which shows that $\gamma\delta\gamma^{-1}\in \mathcal{E}_{\rho_{\bullet}}$.

(\ref{prop:fieldE:b}) Let $\lambda$ be a place of $L$ such that $\rho_{\lambda}$ is residually absolutely irreducible (and symplectic if $\rho_\bullet$ is a.~e.\ symplectic). For all $\fp\not\in S\cup S_{\ell}$ (where $\ell$ is the residue characteristic of $\lambda$), the trace of $\rho_{\lambda}(\Frob_{\fp})$ is fixed by all elements of $\Delta_{\rho_{\bullet}}$ that fix~$\lambda$, whence it belongs to $(E_{\rho_{\bullet}})_\lambda$, and we saw above that the multiplier of $\rho_{\lambda}$ takes values in $(E_{\rho_{\bullet}})_\lambda$ when $\rho_{\bullet}$ is a.~e.\ symplectic. Using Chebotarev's density theorem and Proposition~\ref{prop:rep-theory} (\ref{prop:rep-theory:d}) and~(\ref{prop:rep-theory:e}),
we see that $\rho_{\lambda}$ can be defined over $(E_{\rho_{\bullet}})_\lambda$.

Since $E_{\rho_{\bullet}}$ is generated by the $\{a_{\mathfrak{p}}\;|\;\mathfrak{p}\not\in S\}$, it is clear that $[\rho_{\lambda}]$ cannot contain a member which is defined over a subfield smaller than $(E_{\rho_{\bullet}})_{\lambda}$.
\end{proof}

We will next see that, when $\rho_{\lambda}$ is residually absolutely irreducible, the projective field of definition of $[\rho_\lambda]$ of Definition~\ref{defi:main}
is just the completion at~$\lambda$
of the projective field of definition of the compatible system.

\begin{lem}\label{lem:compatible}
Let $\rho_\bullet = (\rho_\lambda)_\lambda$ be an $n$-dimensional a.~e.\ absolutely irreducible compatible system as in Definition~\ref{defi:compatible}.
Let $\psi:G_F \to L^\times$ be a character of finite order.
If $\rho_\bullet$ is a.~e.\ symplectic, assume that $\rho_\bullet$ has no complex multiplication and
that the multiplier of $\rho_\bullet$ is $\psi \chi_\ell^a$.
If $\rho_\bullet$ is not a.~e.\ symplectic, assume that $L$ contains
the $n$-th roots of the values of~$\psi$ and
that the determinant of $\rho_\bullet$ is $\psi \chi_\ell^a$.
Let $\lambda$ be a place of $L$ such that $\rho_{\lambda}$ is residually absolutely irreducible
(and symplectic if $\rho_\bullet$ is a.~e.\ symplectic).

Then we have
$$\Gamma_{[\rho_\lambda]} = \Gamma_\lambda \cap \Gamma_{\rho_\bullet},$$
where the intersection is taken inside $\Gamma$ via the embedding
$\Gamma_\lambda \hookrightarrow \Gamma$ coming from the embedding $L \hookrightarrow L_\lambda$.
\end{lem}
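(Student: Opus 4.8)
The plan is to show the two groups coincide by establishing both inclusions, using throughout that $\rho_\lambda$ being residually absolutely irreducible allows us (via Proposition~\ref{prop:rep-theory}\,(\ref{prop:rep-theory:d})--(\ref{prop:rep-theory:e}) and Remark~\ref{rem:noCM}) to pick a representative of $[\rho_\lambda]$ defined over $L_\lambda$, so that we may work inside $\calG_\lambda = \calE_\lambda \rtimes \Gamma_\lambda$, and that by Lemma~\ref{lem:G-on-traces} the group $\calG_{[\rho_\lambda]}$ is characterised by the trace identities $\gamma(\Tr \rho_\lambda(g)) = \Tr \rho_\lambda(g)\,\epsilon(g)$ for all $g \in G_F$. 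First I would treat the inclusion $\Gamma_{[\rho_\lambda]} \subseteq \Gamma_\lambda \cap \Gamma_{\rho_\bullet}$: take $\gamma \in \Gamma_{[\rho_\lambda]}$, so there is $\epsilon \in \calE_\lambda$ with $(\gamma,\epsilon) \in \calG_{[\rho_\lambda]}$; evaluating the trace identity at $\Frob_\fp$ for $\fp \notin S \cup S_\ell$ gives $\gamma(a_\fp) = a_\fp\,\epsilon(\Frob_\fp)$ for almost all $\fp$. The point is that $\epsilon$, being a priori an $L_\lambda$-valued character, must actually take values in $L$: by Lemma~\ref{lem:order}\,(\ref{lem:order:b}) in the symplectic case (using the multiplier $\psi\chi_\ell^a$ and that $\gamma$ fixes the cyclotomic part, $\gamma$ lying in a decomposition group) its square, hence itself up to the finite-order ambiguity, has values among those of $\psi$, which lie in $L$; in the determinant case one argues similarly via Lemma~\ref{lem:order}\,(\ref{lem:order:a}) together with the hypothesis that $L$ contains the relevant $n$-th roots and Corollary~\ref{cor:charpoly-coeff}\,(\ref{cor:charpoly-coeff:a}), which bounds the order of $\epsilon$ by $n$. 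Once $\epsilon$ is $L$-valued it defines an element of $\calE$, and the Chebotarev density theorem upgrades ``for almost all $\fp$'' to ``for all $\fp \notin S$'' (since two continuous characters of $G_F$ agreeing on a density-one set of Frobenii agree everywhere), so $(\gamma,\epsilon) \in \calG_{\rho_\bullet}$ and $\gamma \in \Gamma_{\rho_\bullet}$; that $\gamma \in \Gamma_\lambda$ is immediate from working inside $\calG_\lambda$.

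For the reverse inclusion $\Gamma_\lambda \cap \Gamma_{\rho_\bullet} \subseteq \Gamma_{[\rho_\lambda]}$, take $\gamma \in \Gamma_\lambda$ with $(\gamma,\epsilon) \in \calG_{\rho_\bullet}$ for some $\epsilon \in \calE$, so $\gamma(a_\fp) = a_\fp\,\epsilon(\Frob_\fp)$ for all $\fp \notin S$. Viewing $\epsilon$ as an $L_\lambda$-valued character (via $L \hookrightarrow L_\lambda$) and $\gamma$ as acting on $L_\lambda$, I would check that $\gamma(\Tr \rho_\lambda(\Frob_\fp)) = \Tr \rho_\lambda(\Frob_\fp)\,\epsilon(\Frob_\fp)$ for all $\fp \notin S \cup S_\ell$ — this is just the displayed compatible-system identity $\Tr \rho_\lambda(\Frob_\fp) = a_\fp$ combined with the defining relation of $\calG_{\rho_\bullet}$. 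Then ${}^\gamma\rho_\lambda$ and $\rho_\lambda \otimes \epsilon$ are both absolutely irreducible (the latter since it differs from $\rho_\lambda$ by a character) with equal traces on a density-one set of Frobenii, hence equal traces everywhere by Chebotarev, hence equivalent by Proposition~\ref{prop:rep-theory}\,(\ref{prop:rep-theory:b}) (or (\ref{prop:rep-theory:c}) in the symplectic case, using no CM via Remark~\ref{rem:noCM}); thus $(\gamma,\epsilon) \in \calG_{[\rho_\lambda]}$ and $\gamma \in \Gamma_{[\rho_\lambda]}$.

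The main obstacle I anticipate is the bookkeeping around where the characters $\epsilon$ live: in one direction $\epsilon$ arises as an $L_\lambda$-valued character and one must descend it to $L$ before it can be an element of $\calE$, while in the other direction an $\calE$-valued $\epsilon$ must be base-changed up; both steps hinge on the finite-order/bounded-order control from Lemma~\ref{lem:order} and Corollary~\ref{cor:charpoly-coeff} together with the standing hypotheses on $\psi$ and the $n$-th roots of its values, and it is precisely here that the symplectic versus non-symplectic cases diverge and must be handled separately. A secondary, more routine, point is to make sure that when $\gamma \in \Gamma_\lambda$ acts on $L_\lambda$ it indeed fixes the cyclotomic character's values appropriately — but this is automatic since $\chi_\ell$ is $\QQ_\ell$-valued and $\gamma$ fixes $\QQ_\ell \subseteq K_\lambda$. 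Everything else is an application of Chebotarev and the Brauer--Nesbitt-type statements already recorded in Proposition~\ref{prop:rep-theory}.
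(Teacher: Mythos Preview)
Your proof is correct and follows essentially the same route as the paper: both inclusions are handled via the trace characterisation of inner twists (Lemma~\ref{lem:G-on-traces}), with the key step being Lemma~\ref{lem:order} to descend the $L_\lambda$-valued character $\epsilon$ to an $L$-valued one; the paper merely declares the inclusion $\supseteq$ ``clear'' where you spell out the Chebotarev/Brauer--Nesbitt argument. One small slip: Corollary~\ref{cor:charpoly-coeff}\,(\ref{cor:charpoly-coeff:a}) applies only to $\epsilon \in \calE_{[\rho]}$ (that is, to pairs $(1,\epsilon)$), not to general $(\gamma,\epsilon)$, so it does not bound the order of $\epsilon$ here --- but you do not need it, since Lemma~\ref{lem:order}\,(\ref{lem:order:a}) combined with the hypothesis that $L$ contains the $n$-th roots of the values of~$\psi$ already forces the values of~$\epsilon$ into~$L$.
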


\begin{proof}
We use Lemma~\ref{lem:G-on-traces}. The inclusion `$\supseteq$' is
clear. Conversely, let $\gamma \in \Gamma_{[\rho_\lambda]}$; in
particular, $\gamma \in \Gamma_\lambda$. There is a character
$\epsilon \in \calE_\lambda$ such that $(\gamma,\epsilon) \in
\calG_{[\rho_\lambda]}$, i.e.\ that $\gamma (a_\fp) = a_\fp \cdot
\epsilon(\Frob_\fp)$ for all $\fp \not\in S \cup S_\ell$.
Lemma~\ref{lem:order} implies that $\epsilon$ can be lifted to an element of~$\calE$,
i.e.\ taking values
in~$L^\times$. The equation $\gamma (a_\fp) = a_\fp \cdot
\epsilon(\Frob_\fp)$ for all $\fp \not\in S \cup S_\ell$ is now an
equation in the field~$L$, implying $(\gamma,\epsilon) \in
\calG_{\rho_\bullet}$.
\end{proof}

The following theorem is our main result about projective fields of definition
of compatible systems of $\ell$-adic representations.

\begin{thm}\label{thm:compatible}
Let $\rho_\bullet = (\rho_\lambda)_\lambda$ be an $n$-dimensional a.~e.\ absolutely irreducible compatible system as in Definition~\ref{defi:compatible}.
Let $\psi:G_F \to L^\times$ be a character of finite order.
If $\rho_\bullet$ is a.~e.\ symplectic, assume that $\rho_\bullet$ has no complex multiplication and
that the multiplier of $\rho_\bullet$ is $\psi \chi_\ell^a$.
If $\rho_\bullet$ is not a.~e.\ symplectic, assume that $L$ contains
the $n$-th roots of the values of~$\psi$ and
that the determinant of $\rho_\bullet$ is $\psi \chi_\ell^a$.

Then for each place $\lambda$ of~$L$ such that $\rho_{\lambda}$ is residually absolutely irreducible (and symplectic if $\rho_\bullet$ is a.~e.\ symplectic), the projective field of definition of $[\rho_\lambda]$
is~$(K_{\rho_\bullet})_\lambda$, the completion of $K_{\rho_\bullet}$ at the prime lying below~$\lambda$.
\end{thm}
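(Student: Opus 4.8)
The plan is to deduce the statement from the group-theoretic identity in Lemma~\ref{lem:compatible} together with the standard behaviour of decomposition groups under passage to fixed fields; there is essentially no hard analytic or representation-theoretic content left at this stage, as all of it has been packaged into Lemma~\ref{lem:compatible} (and, behind it, Lemma~\ref{lem:order} and Proposition~\ref{prop:rep-theory}).

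First I would observe that the hypotheses of the theorem are exactly those required to apply Lemma~\ref{lem:compatible} at the chosen place~$\lambda$: indeed $\rho_\lambda$ is residually absolutely irreducible (and symplectic if $\rho_\bullet$ is a.~e.\ symplectic), and the remaining hypotheses on $\psi$, on $\chi_\ell^a$, and on the absence of complex multiplication are inherited verbatim. Hence $\Gamma_{[\rho_\lambda]} = \Gamma_\lambda \cap \Gamma_{\rho_\bullet}$ as subgroups of $\Gamma = \Gal(L/K)$, the intersection being taken via the embedding $\Gamma_\lambda \hookrightarrow \Gamma$ coming from $L \hookrightarrow L_\lambda$, so that $\Gamma_\lambda$ is the decomposition group of~$\lambda$ in~$\Gamma$.

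Next I would translate the right-hand side into local Galois theory. Since $K_{\rho_\bullet} = L^{\Gamma_{\rho_\bullet}}$, the extension $L/K_{\rho_\bullet}$ is Galois with group $\Gamma_{\rho_\bullet}$. Fixing compatible embeddings, the completion $(K_{\rho_\bullet})_\lambda$ (of $K_{\rho_\bullet}$ at the prime below~$\lambda$) is a subfield of~$L_\lambda$, the extension $L_\lambda/(K_{\rho_\bullet})_\lambda$ is Galois, and its Galois group is the decomposition group of~$\lambda$ inside $\Gal(L/K_{\rho_\bullet}) = \Gamma_{\rho_\bullet}$, namely $\Gamma_{\rho_\bullet} \cap \Gamma_\lambda$. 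Combining with the previous step, $\Gal(L_\lambda/(K_{\rho_\bullet})_\lambda) = \Gamma_{[\rho_\lambda]}$.

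Finally, by Definition~\ref{defi:main} (applied in Set-up~\ref{setup} for $L_\lambda/K_\lambda$, as explained before Proposition~\ref{prop:fieldE}) the projective field of definition of $[\rho_\lambda]$ is $K_{[\rho_\lambda]} = L_\lambda^{\Gamma_{[\rho_\lambda]}}$, and the inclusion $\Gamma_{[\rho_\lambda]} \subseteq \Gamma_\lambda = \Gal(L_\lambda/K_\lambda)$ makes this a genuine Galois subextension with $\Gal(L_\lambda/K_{[\rho_\lambda]}) = \Gamma_{[\rho_\lambda]}$. Comparing the two local Galois groups and taking fixed fields gives $K_{[\rho_\lambda]} = (K_{\rho_\bullet})_\lambda$, which is the assertion. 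I do not anticipate a genuine obstacle; the only point demanding care is a consistent choice of embeddings, so that the ``decomposition group of~$\lambda$'' appearing in Lemma~\ref{lem:compatible} literally coincides with the one identifying $\Gal(L_\lambda/K_\lambda)$ with a subgroup of~$\Gamma$, and so that $(K_{\rho_\bullet})_\lambda$ is realised inside $L_\lambda$; once these choices are pinned down, the argument above is immediate.
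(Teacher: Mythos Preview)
Your proposal is correct and follows essentially the same route as the paper: apply Lemma~\ref{lem:compatible} to obtain $\Gamma_{[\rho_\lambda]} = \Gamma_\lambda \cap \Gamma_{\rho_\bullet}$, and then pass to fixed fields to conclude $K_{[\rho_\lambda]} = L_\lambda^{\Gamma_\lambda \cap \Gamma_{\rho_\bullet}} = (L^{\Gamma_{\rho_\bullet}})_\lambda = (K_{\rho_\bullet})_\lambda$. The paper compresses this into a single chain of equalities, whereas you spell out the decomposition-group identification more carefully, but the argument is the same.
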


\begin{proof}
By Lemma~\ref{lem:compatible}, we have $K_{[\rho_\lambda]} = L_\lambda^{\Gamma_{[\rho_\lambda]}} =
L_\lambda^{\Gamma_\lambda \cap \Gamma_{\rho_\bullet}} = (L^{\Gamma_{\rho_\bullet}})_\lambda
= (K_{\rho_\bullet})_\lambda$.
\end{proof}

Given a compatible system $\rho_\bullet=(\rho_\lambda)_\lambda$ we will now consider
the residual representations $\overline{\rho}_\lambda$ (see Remark~\ref{rem:rep_integrality}).
If $M$ is a local field, we denote by $\kappa(M)$ its residue field.
Let $\lambda$ be a finite place of~$L$ and assume $\rho_{\lambda}$ is defined over $L_{\lambda}$ (which is the case whenever $\rho_{\lambda}$ is residually absolutely irreducible (and symplectic without complex multiplication)).
Consider Set-up~\ref{setup} for the Galois extension $\kappa(L_\lambda)/\kappa(K_\lambda)$
with Galois group~$\overline{\Gamma}_\lambda$, using the notation $\overline{\calE}_\lambda$
and $\overline{\calG}_\lambda$.
We apply Definition~\ref{defi:main} to the equivalence class $[\overline{\rho}_\lambda]$
of the residual representation,
using the notation $\calG_{[\overline{\rho}_\lambda]} \subseteq \overline{\calG}_\lambda$,
$\calE_{[\overline{\rho}_\lambda]}\subseteq \overline{\calE}_\lambda$ and
$\Gamma_{[\overline{\rho}_\lambda]}\subseteq \overline{\Gamma}_\lambda$.

We now state and prove our principal result describing the projective fields of definition
for almost all residual representations of a compatible system.
It may be useful to roughly summarise where the various conditions in the theorem come from.
That $\psi$ has finite order is needed to ensure that all $\epsilon$ occuring in inner
twists are of finite order; the condition on the $n$-th roots of the values of~$\psi$
in the non-symplectic case ensures that $\epsilon$ takes its values in $E_{\rho_\bullet}$
(which is automatic in the symplectic case). The absolute irreducibility and the
non-CM condition (in the symplectic case) are needed to ensure that the representations
are determined by the characteristic polynomials of Frobenius.
The condition on the shape above~$\ell$ is needed to exclude that the residual
inner twists ramify at~$\ell$ (if infinitely many of them did ramify, then they would not
`glue' to an inner twist of the compatible system).

\begin{thm}\label{thm:residual}
Let $\rho_\bullet = (\rho_\lambda)_\lambda$ be an $n$-dimensional a.~e.\ absolutely irreducible
compatible system as in Definition~\ref{defi:compatible}.
Let $\psi:G_F \to L^\times$ be a character of finite order.
If $\rho_\bullet$ is a.~e.\ symplectic, assume that $\rho_\bullet$ has no complex multiplication and
that the multiplier of $\rho_\bullet$ is $\psi \chi_\ell^a$.
If $\rho_\bullet$ is not a.~e.\ symplectic, assume that $L$ contains the $n$-th roots of the
values of~$\psi$ and that the determinant of $\rho_\bullet$ is $\psi \chi_\ell^a$.

Assume that for primes $\fL$ of~$F$ lying over the residue characteristic of~$\lambda$,
the representation $\overline{\rho}_\lambda$ satisfies Assumption~\ref{ass:shape}
(this is automatic if $F_\fL = \QQ_\ell$).
Moreover, assume that there is an integer~$k$, independent of~$\lambda$,
such that the numbers $a_{i,j}$ appearing in Assumption~\ref{ass:shape}
are bounded by~$k$.

Then for all places~$\lambda$ of~$L$ at which $\rho_\lambda$ is residually absolutely irreducible, except possibly finitely many, the projective
field of definition of $[\overline{\rho}_\lambda]$ is $\kappa((K_{\rho_\bullet})_\lambda)$.
\end{thm}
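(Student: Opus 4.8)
The goal is to establish an equality of subgroups of~$\Gamma$, namely $\Gamma_{[\overline{\rho}_\lambda]} = \Gamma_\lambda \cap \Gamma_{\rho_\bullet}$, valid for all but finitely many of the places~$\lambda$ in question. Here $\Gamma_\lambda = \Gal(L_\lambda/K_\lambda)$ is viewed inside~$\Gamma$ as the decomposition group at~$\lambda$, and for $\lambda$ unramified in $L/K$ the reduction map identifies it with $\overline{\Gamma}_\lambda = \Gal(\kappa(L_\lambda)/\kappa(K_\lambda))$, inside which $\Gamma_{[\overline{\rho}_\lambda]}$ sits by Definition~\ref{defi:main}. Since $L/K$ and $L/K_{\rho_\bullet}$ are unramified outside a finite set of places, for almost all~$\lambda$ one has $\kappa((K_{\rho_\bullet})_\lambda) = \kappa(L_\lambda)^{\Gamma_\lambda \cap \Gamma_{\rho_\bullet}}$ and $K_{[\overline{\rho}_\lambda]} = \kappa(L_\lambda)^{\Gamma_{[\overline{\rho}_\lambda]}}$, so the displayed equality of groups gives the theorem.

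I would first settle two points, each excluding only finitely many~$\lambda$. (i) By Lemma~\ref{lem:unram} together with Proposition~\ref{prop:epsilon-residual} --- the latter applicable because of Assumption~\ref{ass:shape} and the uniform bound~$k$, once $\ell$ is large enough that $k < \frac{\ell-1}{2n}$ --- every character $\bar{\epsilon}$ occurring in an inner twist of $\overline{\rho}_\lambda$ is unramified outside~$S$; by Lemma~\ref{lem:order} it has order dividing a fixed integer~$N$ (e.g.\ $N = 2m$ in the symplectic case, $m$ the order of~$\psi$, and $N = nm$ otherwise), and its values are reductions of roots of unity contained in~$L$. Hence only finitely many such $\bar{\epsilon}$ can occur, independently of~$\lambda$, and for $\lambda$ outside a finite set each lifts uniquely to a character $\epsilon : G_F \to L^\times$ of order dividing~$N$ unramified outside~$S$. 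Collecting the finitely many pairs $(\gamma,\epsilon)$ with $\gamma \in \Gamma$ and $\epsilon$ such a character into a finite set $\Theta \subseteq \calG$, the same bounds show $\calG_{\rho_\bullet} \subseteq \Theta$ (choose a place $\lambda_0$ with $\rho_{\lambda_0}$ absolutely irreducible and apply Lemmas~\ref{lem:unram} and~\ref{lem:order} to the inner twists of $\rho_{\lambda_0}$, using two such $\lambda_0$ over distinct rational primes to get non-ramification everywhere outside~$S$). (ii) A pigeonhole argument in the style of Remark~\ref{rem:noCM} shows that in the symplectic case $\overline{\rho}_\lambda$ has no complex multiplication for all but finitely many~$\lambda$: the characters $\bar\epsilon_\lambda$ effecting a hypothetical CM twist lie, by~(i), in a fixed finite set, so if infinitely many~$\lambda$ had CM, one fixed proper open subgroup $H \subsetneq G_F$ would arise as $\ker\bar\epsilon_\lambda$ for infinitely many~$\lambda$; for each such~$\lambda$, $\overline{\rho}_\lambda \otimes \bar\epsilon_\lambda \sim \overline{\rho}_\lambda$ forces $\bar a_\fp = 0$ whenever $\Frob_\fp \notin H$, and choosing such an $\fp_0 \notin S$ gives $a_{\fp_0} \equiv 0 \pmod{\lambda}$ for infinitely many~$\lambda$, hence $a_{\fp_0} = 0$ in~$L$; as this holds for all such $\fp_0$, any $L^\times$-valued lift of one $\bar\epsilon_\lambda$ is a nontrivial element of $\calE_{\rho_\bullet}$, contrary to hypothesis. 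Consequently, for almost all~$\lambda$ with $\overline{\rho}_\lambda$ absolutely irreducible, Lemma~\ref{lem:G-on-traces} applies to $\overline{\rho}_\lambda$ and, via Chebotarev (testing on Frobenii), identifies $\calG_{[\overline{\rho}_\lambda]}$ with the set of $(\bar{\gamma},\bar{\epsilon}) \in \overline{\calG}_\lambda$ satisfying $\bar{\gamma}(\bar{a}_\fp) = \bar{a}_\fp\,\bar{\epsilon}(\Frob_\fp)$ for all $\fp \notin S \cup S_\ell$.

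The two inclusions are then short. For $\Gamma_\lambda \cap \Gamma_{\rho_\bullet} \subseteq \Gamma_{[\overline{\rho}_\lambda]}$: given $\gamma$ on the left, pick $\epsilon$ with $(\gamma,\epsilon) \in \calG_{\rho_\bullet}$, so $\gamma(a_\fp) = a_\fp\,\epsilon(\Frob_\fp)$ in~$L$ for all $\fp \notin S$; as $\gamma$ lies in the decomposition group at~$\lambda$ and $\epsilon$ takes $\lambda$-unit values reducing injectively mod~$\lambda$, reducing yields $\bar{\gamma}(\bar{a}_\fp) = \bar{a}_\fp\,\bar{\epsilon}(\Frob_\fp)$ for $\fp \notin S \cup S_\ell$, whence $(\bar\gamma,\bar\epsilon) \in \calG_{[\overline{\rho}_\lambda]}$ and $\gamma \in \Gamma_{[\overline{\rho}_\lambda]}$. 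For the reverse inclusion --- the crux --- take $(\bar\gamma,\bar\epsilon) \in \calG_{[\overline{\rho}_\lambda]}$ and let $(\gamma,\epsilon) \in \Theta$ be the lift from~(i). For every $(\gamma',\epsilon') \in \Theta \setminus \calG_{\rho_\bullet}$ fix a place $\fp(\gamma',\epsilon') \notin S$ with $\gamma'(a_{\fp(\gamma',\epsilon')}) - a_{\fp(\gamma',\epsilon')}\,\epsilon'(\Frob_{\fp(\gamma',\epsilon')}) \neq 0$ in~$L$, and enlarge the finite exceptional set of~$\lambda$'s by all places of~$L$ dividing one of these finitely many nonzero elements and all places whose residue characteristic equals that of one of these witness primes. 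For~$\lambda$ outside this set: if $(\gamma,\epsilon) \notin \calG_{\rho_\bullet}$, then its witness prime $\fp = \fp(\gamma,\epsilon)$ lies outside $S \cup S_\ell$ and $\gamma(a_\fp) = a_\fp\,\epsilon(\Frob_\fp)$ fails mod~$\lambda$, i.e.\ $\bar{\gamma}(\bar{a}_\fp) \neq \bar{a}_\fp\,\bar{\epsilon}(\Frob_\fp)$, contradicting $(\bar\gamma,\bar\epsilon) \in \calG_{[\overline{\rho}_\lambda]}$. Hence $(\gamma,\epsilon) \in \calG_{\rho_\bullet}$ and $\gamma \in \Gamma_\lambda \cap \Gamma_{\rho_\bullet}$. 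This proves the displayed equality for all but finitely many~$\lambda$, hence the theorem.

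The main obstacle is this reverse inclusion: it is exactly the passage from congruences modulo~$\lambda$ (which a residual inner twist always satisfies) to honest identities in~$L$, and it is why the conclusion holds only for almost all~$\lambda$. Everything hinges on the finiteness of the candidate set~$\Theta$, which in turn rests on the uniform bounds of step~(i): the bounded order of inner-twist characters (Lemma~\ref{lem:order}) and, above all, their non-ramification at primes above~$\ell$, which is precisely the content of Proposition~\ref{prop:epsilon-residual} and the reason for the uniform bound hypothesis on the~$a_{i,j}$.
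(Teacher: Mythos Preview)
Your proof is correct and follows the same strategy as the paper's: confine the residual inner-twist characters to a fixed finite set via Lemmas~\ref{lem:unram}, \ref{lem:order} and Proposition~\ref{prop:epsilon-residual}, then upgrade congruences mod~$\lambda$ to identities in~$L$ by discarding finitely many~$\lambda$. The only differences are organisational --- you compare $\Gamma_{[\overline{\rho}_\lambda]}$ directly with $\Gamma_\lambda \cap \Gamma_{\rho_\bullet}$ using explicit witness primes, whereas the paper passes through the equality $\calG_{[\rho_\lambda]} = \calG_{[\overline{\rho}_\lambda]}$ and then invokes Theorem~\ref{thm:compatible}; your step~(ii) on residual absence of CM is extra work the paper sidesteps (it only needs no~CM for $\rho_\lambda$, supplied by Remark~\ref{rem:noCM}), but it is correct and is what your route needs to apply Lemma~\ref{lem:G-on-traces} to~$\overline{\rho}_\lambda$.
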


\begin{proof}
We may restrict to those $\lambda$ of residual characteristic~$\ell$
satisfying $\frac{\ell-1}{2n} > k$
(cf.\ Proposition~\ref{prop:epsilon-residual}), and symplectic without complex multiplication if $\rho_\bullet$ is a.~e.\ symplectic.
We may furthermore limit ourselves to $\lambda$ such that $\lambda$ is unramified in~$L/K$.

Let $(\gamma,\epsilon) \in \calG_{[\overline{\rho}_\lambda]}$ be an inner
twist of~$[\overline{\rho}_\lambda]$. We identify $\epsilon$
with its lift to an element of $\calE$ of the same order (with respect to the
fixed $L \hookrightarrow L_\lambda$) and $\Gamma_{\lambda}=\Gal(L_{\lambda}/K_{\lambda})$ with $\overline{\Gamma}_{\lambda}=\Gal(\kappa(L_{\lambda})/\kappa(K_{\lambda}))$, so that $\mathcal{G}_{[\rho_{\lambda}]}\subseteq \mathcal{G}_{[\overline{\rho}_{\lambda}]}$.
By Lemma~\ref{lem:order}
we know that the order of~$\epsilon$ is bounded independently of~$\lambda$.
Moreover, by Proposition~\ref{prop:epsilon-residual}, $\epsilon$
is unramified outside~$S$.
Let $\calE_0$ be the finite subset of~$\calE$ consisting of those characters
meeting these two requirements.
Hence, $\calG_0 := \calE_0 \rtimes \Gamma$ is a finite subgroup of~$\calG$
and  $\calG_{[\rho_{\lambda}]}$ and $\calG_{[\overline{\rho}_\lambda]}$ are subgroups.

Now consider $(\gamma,\epsilon) \in \calG_0$.
Assume that $(\gamma,\epsilon) \in \calG_{[\overline{\rho}_\lambda]}$
for infinitely many~$\lambda$. Then for all places $\fp$ of~$F$ not in~$S$
and each of these $\lambda$ (except those above the residue characteristic of~$\fp$),
we have
$$ \gamma (a_\fp) \equiv a_\fp \cdot \epsilon(\Frob_\fp) \pmod \lambda.$$
Consequently, we have equality $\gamma(a_\fp) = a_\fp \cdot \epsilon(\Frob_\fp)$,
whence $(\gamma,\epsilon) \in \calG_{[\rho_{\lambda}]}$.
This means that avoiding also those finitely many~$\lambda$ such that
a $(\gamma,\epsilon) \in \calG_0 \setminus \calG_{[\rho_{\lambda}]}$ is in
$\calG_{[\overline{\rho}_\lambda]}$, we have
$\calG_{[\rho_\lambda]} = \calG_{[\overline{\rho}_\lambda]}$, thus 
$\kappa(K_{[\rho_{\lambda}]})=K_{[\overline{\rho}_{\lambda}]}$.
Since $\rho_{\lambda}$ is residually absolutely irreducible, Theorem~\ref{thm:compatible} yields $K_{[\rho_{\lambda}]}=(K_{\rho_{\bullet}})_{\lambda}$, which proves the theorem.
\end{proof}

\section{Application to compatible systems with huge residual images}\label{sec:huge}

In this section we make use of Theorem~\ref{thm:residual} to prove a
first result that allows us to (almost) determine the projective
image of the residual representation $\overline{\rho}_{\lambda}$,
(except for finitely many $\lambda$), for a compatible system
$\rho_\bullet=(\rho_{\lambda})$ satisfying suitable conditions.

Let $V$ be an $n$-dimensional $K$-vector space endowed with a
symplectic pairing. Recall that an element $\tau \in \GL(V)$ is a nontrivial
{\em transvection} if $\tau - \id_V$ has rank~$1$, i.e.\ if $\tau$
fixes a hyperplane pointwisely and there is a line $U$ such that
$\tau(v)-v \in U$ for all~$v \in V$. Any transvection has
determinant~$1$. A {\em symplectic transvection} is a transvection
in~$\Sp(V)$. Any symplectic transvection has the form
$$ T_v[\lambda] \in \Sp(V): u \mapsto u + \lambda \langle u,v \rangle v$$
with {\em direction vector} $v \in V$ and {\em parameter} $\lambda
\in K$ (cf.~\cite{A}, p.~137-138).

\begin{defi}\label{defi:huge}
Let $L$ be an algebraically closed field, and $G$ a subgroup of
$\GSp_{n}(L)$. We will say that $G$ is \emph{huge} if the subgroup
of $G$ generated by the transvections contained in $G$ is conjugated
(in $\GSp_n(L)$) to $\Sp_n(K)$ for some subfield $K\subseteq L$.
\end{defi}

Let $\overline{K}$ be an algebraic closure of~$K$.

\begin{rem} When $K$ is a finite field of characteristic $\ell$, a subgroup
$G\subseteq \GSp_n(\overline{K})$ is huge if and only if it contains a subgroup
conjugated (in $\GSp_n(\overline{K})$) to $\Sp_n(\mathbb{F}_{\ell})$.
This result is proved in Part~II~\cite{partII} of the present series of papers.
\end{rem}

The main ingredient will be the following group-theoretic result:

\begin{prop}\label{prop:moregt}
Let $K$ be a finite field of characteristic different from $2$,
and $G\subseteq \GSp_{n}(\overline{K})$ be a group
such that the group generated by the transvections in $G$ is
$\Sp_{n}(K)$. Then $G\subseteq \GSp_{n}(K){\overline{K}}^{\times}$.
\end{prop}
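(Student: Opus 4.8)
The goal is to show that an element $g \in G \subseteq \GSp_n(\overline{K})$ normalising $\Sp_n(K)$ (which it does, since $\Sp_n(K)$ is generated by the transvections of $G$, a normal subset of $G$ under conjugation) must already lie in $\GSp_n(K) \cdot \overline{K}^\times$. The plan is to analyse the conjugation action of $g$ on the set of transvections. A symplectic transvection $T_v[\mu]$ is determined by the line $\langle v \rangle$ and a parameter in $K^\times$ (rescaling $v$ by $c$ rescales $\mu$ by $c^{-2}$, so the parameter is really well-defined in $K^\times/(K^\times)^2$ attached to the line, but the \emph{set} of transvections with a given direction is a group isomorphic to $(K,+)$). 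First I would record that $g\, T_v[\mu]\, g^{-1}$ is again a transvection — its direction vector is $g v$ and, since $g\in\GSp_n$ with multiplier $m(g)$, one computes directly that $g\,T_v[\mu]\,g^{-1} = T_{gv}[\mu\, m(g)^{-1}]$. For this to lie in $\Sp_n(K)$ for every transvection $T_v[\mu]\in\Sp_n(K)$, we need: (i) the line $\langle gv\rangle$ is spanned by a $K$-rational vector for every $K$-rational line $\langle v\rangle$, and (ii) the resulting parameter lies in $K$.

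Condition (i) is the crux. The $K$-rational lines in $\overline{K}^n$ — equivalently the points of $\PP^{n-1}(K)$ — are permuted by $g$. Since $n\geq 2$ (indeed $n$ is even and positive), $\PP^{n-1}(K)$ spans $\PP^{n-1}(\overline{K})$: one can pick the $n$ standard coordinate lines plus the line $\langle e_1+\cdots+e_n\rangle$, i.e.\ a projective frame, all $K$-rational. So $g$ sends a projective frame to a projective frame, all of whose points are $K$-rational. By the standard normalisation argument (the fundamental theorem of projective geometry applied to a frame, or just elementary linear algebra), after scaling $g$ by a single element of $\overline{K}^\times$ we may assume $g$ sends each of $e_1,\dots,e_n$ to a $K$-multiple of itself, say $g e_i = \lambda_i e_i$ with $\lambda_i\in\overline{K}^\times$, and then the condition that $g\langle e_1+\cdots+e_n\rangle$ be $K$-rational forces all ratios $\lambda_i/\lambda_j$ to lie in $K^\times$. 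Hence $g = \lambda_1 \cdot \mathrm{diag}(1,\lambda_2/\lambda_1,\dots,\lambda_n/\lambda_1)$ in this basis — but wait, I should be careful that I am working in a symplectic basis; more robustly, the above shows $g\in \overline{K}^\times\cdot\GL_n(K)$ directly from the frame argument (scale so one coordinate of $g e_1$ is in $K$, deduce the rest). So $g = c\cdot h$ with $c\in\overline{K}^\times$ and $h\in\GL_n(K)$.

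It remains to upgrade $h\in\GL_n(K)$ to $h\in\GSp_n(K)$. Since $g\in\GSp_n(\overline{K})$, we have $h^\tr J h = c^{-2} m(g)\, J$; the left side has entries in $K$ and $J$ is the $K$-rational Gram matrix, so $c^{-2}m(g)\in K$, i.e.\ $h$ respects the symplectic form up to a scalar in $K^\times$, which is exactly $h\in\GSp_n(K)$. Therefore $g = c\cdot h\in\overline{K}^\times\cdot\GSp_n(K)$, proving the proposition. (Condition (ii) on the parameters, and the hypothesis $\operatorname{char}K\neq 2$, are then automatically consistent — the parameter transforms by $m(g)^{-1}\in K^\times$ up to squares coming from the vector rescaling, and characteristic $\neq 2$ is what makes transvections and the symplectic form behave as stated in the setup; I would only invoke it if needed to identify $\Sp_n(K)$ as generated by its transvections and to keep the parameter group $(K,+)$ nondegenerate.)

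The main obstacle is organising the frame/rescaling argument cleanly so that one genuinely extracts a \emph{single} scalar $c\in\overline{K}^\times$ with $c^{-1}g\in\GL_n(K)$ — this is where one must use that $g$ permutes \emph{all} of $\PP^{n-1}(K)$, not just $n$ lines, so that the pairwise-ratio constraints pin everything down; the rest (from $\GL_n(K)$ to $\GSp_n(K)$) is the short multiplier computation above.
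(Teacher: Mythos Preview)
Your argument is correct and takes a genuinely different route from the paper's. Both proofs begin by observing that $G$ normalises $\Sp_n(K)$ via the conjugation formula $g\,T_v[\lambda]\,g^{-1}=T_{gv}[\lambda\,m(g)^{-1}]$. From there the paper invokes the classification of $\Aut(\Sp_n(K))$ due to Hua (semi-inner automorphisms times field automorphisms), then cites a trace argument from \cite{KLS1} to rule out the field-automorphism part, and finally identifies the centraliser as scalars. Your approach instead extracts from the conjugation formula the purely projective fact that $g$ permutes $\PP^{n-1}(K)$ inside $\PP^{n-1}(\overline{K})$, and then uses an elementary frame argument to conclude $g\in\overline{K}^\times\cdot\GL_n(K)$, followed by a one-line multiplier computation to land in $\GSp_n(K)$. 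This is more self-contained: it avoids both external citations and works directly at the level of the standard representation rather than through the abstract automorphism group.

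One presentational point: the sentence ``we may assume $g$ sends each of $e_1,\dots,e_n$ to a $K$-multiple of itself'' is not what you mean---$g$ need not fix the coordinate lines, only send them to \emph{some} $K$-rational lines. Your ``more robustly'' parenthetical shows you know this, but in a clean write-up you should simply say: write $g e_i = c_i v_i$ with $v_i\in K^n$; then $g(\sum e_i)=\sum c_i v_i$ lying on a $K$-rational line forces all ratios $c_i/c_1\in K$ (since the $v_i$ form a $K$-basis), whence $c_1^{-1}g\in\GL_n(K)$. That is the whole argument, and it is worth stating it this way rather than via a frame-fixing reduction.
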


To prove this proposition, first note the following easy fact:

\begin{lem} Let $G\subseteq \GSp_{n}(\overline{K})$ be a group such that the group generated
by the transvections in $G$ is $\Sp_{n}(K)$. Then $G$ is contained
in the normaliser of $\Sp_n(K)$ in $\GSp_{n}(\overline{K})$.
\end{lem}

\begin{proof}
Let $A\in G$ be any element, say with multiplier $\alpha$. To see
that it belongs to the normaliser
$N_{\GSp_{n}(\overline{K})}(\Sp_{n}(K))$, it suffices to see that,
for all transvection $T=T_v[\lambda]\in \Sp_{n}(K)$, $ATA^{-1}\in
\Sp_{n}(K)$.
An easy computation shows that, for all $w\in V$,
\begin{multline*}AT_v[\lambda]A^{-1}(w)=A(A^{-1}w + \lambda \langle A^{-1}w, v\rangle Av)= w + \lambda\langle A^{-1}w, v\rangle Av=\\
w + \lambda \alpha \langle w, Av\rangle Av=T_{Av}[\lambda
\alpha].
\end{multline*}
But $T_{Av}[\lambda]$ is a transvection and belongs to $G$, so it
belongs to $\Sp_{n}(K)$, as was to be shown.
\end{proof}

So, in order to obtain Proposition~\ref{prop:moregt},
all we need is to prove the following proposition.

\begin{prop}\label{prop:normalizer}
Let $K$ be a finite field of characteristic different from $2$.
Then $N_{\GSp_{n}(\overline{K})}(\Sp_{n}(K))\subseteq \GSp_{n}(K)\overline{K}^{\times}$.
\end{prop}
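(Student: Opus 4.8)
The plan is to compute the normaliser explicitly by exploiting the fact that $\Sp_n(K)$ acts absolutely irreducibly on $\overline{K}^n$ (for $n \ge 2$ even), so that Schur-type arguments force any normalising matrix to be determined up to a scalar by how it conjugates the symplectic form. First I would fix an element $A \in N_{\GSp_n(\overline{K})}(\Sp_n(K))$, say with multiplier $\alpha \in \overline{K}^\times$, and observe that conjugation by $A$ induces an automorphism of $\Sp_n(K)$ that preserves the set of transvections (as in the preceding lemma, $A T_v[\lambda] A^{-1} = T_{Av}[\lambda\alpha]$). The key point is that $\Sp_n(K)$ acts on $\overline{K}^n$ absolutely irreducibly, so by Lemma~\ref{lem:GSp_versus_GL}~(\ref{prop:rep-theory:a}) (Schur's lemma) the composite of the inclusion $\Sp_n(K) \hookrightarrow \GL_n(\overline{K})$ with conjugation by $A$ is another absolutely irreducible representation of $\Sp_n(K)$, isomorphic to the standard one; hence any two matrices realising this isomorphism differ by a scalar.

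Next I would pin down a specific conjugating matrix with entries in $K$. Conjugation by $A$ restricted to $\Sp_n(K)$ is an automorphism of the abstract group $\Sp_n(K)$; for $K$ finite of characteristic $\ne 2$ and $n \ge 4$ (and also $n = 2$, i.e.\ $\SL_2$, with small exceptions handled by hand), every automorphism of $\Sp_n(K)$ is the product of an inner automorphism, a diagonal automorphism coming from $\GSp_n(K)$, and a field automorphism of $K$ — this is the classical description of $\Aut(\Sp_n(K))$ (see e.g.\ Dieudonné or Steinberg). The field-automorphism part applied to the standard representation changes the trace of a semisimple element $g$ from $\Tr(g)$ to its image under a power of Frobenius; but since conjugation by $A$ does \emph{not} change traces (conjugation in $\GL_n(\overline{K})$ preserves the characteristic polynomial), and $\Sp_n(K)$ contains elements whose eigenvalues generate $K$, the field-automorphism component must be trivial. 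Therefore conjugation by $A$ agrees on $\Sp_n(K)$ with conjugation by some $B \in \GSp_n(K)$ (absorbing the diagonal and inner parts). Consequently $B^{-1}A$ centralises $\Sp_n(K)$, and by the absolute irreducibility just noted, $B^{-1}A = \mu \cdot \id_n$ for some $\mu \in \overline{K}^\times$. Hence $A = \mu B \in \GSp_n(K)\overline{K}^\times$, as claimed.

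The main obstacle — and the step requiring care rather than cleverness — is invoking and justifying the classical computation of $\Aut(\Sp_n(K))$ for finite $K$, together with the low-rank exceptional cases: $\Sp_2 = \SL_2$, and small fields where $\PSp_n(K)$ coincides with other simple groups (e.g.\ $\Sp_4(\FF_2)' \cong \mathrm{A}_6$, but $\FF_2$ is excluded by the characteristic hypothesis; $\Sp_4(\FF_3)$, etc.). A cleaner route avoiding the full automorphism theorem would be direct: for any $A$ in the normaliser, conjugation sends the transvection subgroup along the line $Kv$ to the transvection subgroup along $K(Av)$, so $A$ permutes the isotropic lines of $K^n$ compatibly with the $\Sp_n(K)$-action; by the fundamental theorem of projective geometry applied to this incidence structure (the symplectic polar space), $A$ acts projectively as an element of $\PGSp_n(K)$ composed with a field automorphism of $K$, and the trace-preservation argument above then kills the field automorphism. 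Either way, once $A$ is shown to act on $\Sp_n(K)$ by conjugation by an element of $\GSp_n(K)$, Schur's lemma finishes the argument immediately. I would present the automorphism-group version as the main line and remark on the polar-space alternative, and I would check explicitly that the finitely many small cases (bounded $|K|$ and $n \le 4$) are consistent with the statement.
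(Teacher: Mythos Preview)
Your proposal is correct and follows essentially the same line as the paper's proof: embed $N/C$ into $\Aut(\Sp_n(K))$, invoke the classical description of this automorphism group (the paper does this via Lemma~\ref{lem:graphautom}, citing Hua~1948), eliminate the field-automorphism component by the trace-preservation argument (the paper attributes this step to~\cite{KLS1}), and conclude from the fact that the centraliser consists of scalars. The only cosmetic difference is that the paper computes the centraliser $C=\overline{K}^\times$ directly by tracking how centralising elements act on transvection directions, whereas you appeal to the absolute irreducibility of $\Sp_n(K)$ on $\overline{K}^n$ and Schur's lemma.
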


We will make use of an auxiliary lemma:

\begin{lem}\label{lem:graphautom}
Let $K$ be field of characteristic different from $2$.
The group of automorphisms of $\Sp_{n}(K)$ is generated by two
subgroups, as described below:
\begin{itemize}
 \item The group of automorphisms $\Phi$ such that: for all $B\in \Sp_{n}(K)$,  $\Phi(B)=ABA^{-1}$,
where $A\in \GSp_{n}(K)$ (semi-inner automorphisms).

 \item The group of automorphisms $\Phi$ such that there exists $\phi:K\rightarrow K$ field automorphism such that,
for all $B\in \Sp_{n}(K)$,
  $\Phi(B)$ is the matrix with entries obtained by applying $\phi$ to
the entries of $B$ (field automorphisms).
\end{itemize}
\end{lem}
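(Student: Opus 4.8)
The plan is to reduce to the known description of the automorphism group of the simple group $\PSp_n(K)$ and then lift. First I would recall the classical result (due to Steinberg, Dieudonné, and others; see e.g.\ the treatment for classical groups) that, since $n \ge 2$ is even and $\operatorname{char} K \neq 2$, every automorphism of $\PSp_n(K)$ is the composite of an inner automorphism, a diagonal automorphism, and a field automorphism; crucially, for the symplectic groups there is \emph{no} graph automorphism except in the small exceptional case $\mathrm{Sp}_4$ in characteristic~$2$, which is excluded here. The diagonal automorphisms of $\PSp_n(K)$ are exactly those induced by conjugation by elements of $\PGSp_n(K)$ (the similitude group modulo scalars), because $\GSp_n/(\Sp_n \cdot \text{center})$ realises the full diagonal automorphism group for type $C_\ell$. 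So the plan splits into three steps: (1) pass from $\Sp_n(K)$ to $\PSp_n(K)$; (2) apply the classical classification to $\PSp_n(K)$; (3) lift the resulting automorphisms back to $\Sp_n(K)$.

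For step (1), I would use that the center $Z$ of $\mathrm{Sp}_n(K)$ is $\{\pm \id\}$ (here $\operatorname{char} K \neq 2$ makes this nontrivial and characteristic), that $Z$ is characteristic, and that $\mathrm{Sp}_n(K)$ is perfect (true for $n \ge 2$ over any field with $|K| > 3$, and the remaining tiny fields can be checked by hand — or absorbed since the paper will ultimately only need $K$ large). Hence any $\Phi \in \Aut(\mathrm{Sp}_n(K))$ descends to $\overline{\Phi} \in \Aut(\PSp_n(K))$, and the kernel of $\Aut(\mathrm{Sp}_n(K)) \to \Aut(\PSp_n(K))$ consists of automorphisms fixing $\mathrm{Sp}_n(K)$ modulo its center; since $\mathrm{Sp}_n(K)$ is perfect and $Z$ is central, such an automorphism is determined by a homomorphism $\mathrm{Sp}_n(K) \to Z$, which is trivial by perfectness. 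Thus $\Aut(\mathrm{Sp}_n(K)) \hookrightarrow \Aut(\PSp_n(K))$.

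For steps (2) and (3): by the classification, $\overline{\Phi}$ is $\iota_{\bar A} \circ \phi_*$ where $\bar A \in \PGSp_n(K)$ and $\phi_*$ is a field automorphism. Conjugation by a lift $A \in \GSp_n(K)$ and the entrywise field automorphism $\phi$ both define genuine automorphisms of $\mathrm{Sp}_n(K)$ (for conjugation one checks $A\,\mathrm{Sp}_n(K)\,A^{-1} = \mathrm{Sp}_n(K)$, which is immediate since $A$ is a similitude), and their images in $\Aut(\PSp_n(K))$ compose to $\overline{\Phi}$. Since $\Aut(\mathrm{Sp}_n(K)) \hookrightarrow \Aut(\PSp_n(K))$ is injective, $\Phi$ itself equals the composite $\iota_A \circ \phi$. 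I expect the main obstacle to be purely bibliographic/bookkeeping: pinning down a clean reference for the statement that $\PSp_n(K)$ has no exotic (graph) automorphisms in odd characteristic and that all diagonal automorphisms come from $\PGSp_n(K)$, and handling the finitely many small groups where $\mathrm{Sp}_n(K)$ fails to be simple or perfect (e.g.\ $\mathrm{Sp}_2(\mathbb{F}_3)$); in the intended application $K$ is a large finite field, so these degenerate cases are harmless, but they should be acknowledged.
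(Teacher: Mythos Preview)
Your plan is correct, and the only potential gap you already flagged: the reduction via perfectness of $\Sp_n(K)$ breaks for a couple of tiny fields, which is harmless in the paper's application but worth a one-line caveat.

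By comparison, the paper does not argue at all: it simply cites Hua's 1948 classification of automorphisms of $\Sp_n(K)$ directly (so the statement is taken as a black box from the literature). Your route is genuinely different in emphasis: rather than quoting a single end-to-end reference, you reduce to the automorphism structure of the \emph{simple} group $\PSp_n(K)$ (no graph automorphisms for type $C$ away from characteristic~$2$, diagonal automorphisms realised by $\PGSp_n$) and then lift using perfectness and the characteristic center $\{\pm \id\}$. The advantage of your approach is that it explains \emph{why} no exotic automorphisms appear and isolates exactly where $\operatorname{char} K \neq 2$ is used; the advantage of the paper's approach is economy and that Hua's theorem already covers arbitrary fields of odd characteristic uniformly, with no need to worry about the small-field edge cases you mention.
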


\begin{proof} This follows from the main Theorem of \cite{Hua1948}, page 740.
\end{proof}

\begin{proof}[Proof of Proposition \ref{prop:normalizer}]
Denote by $N$ the normaliser of $\Sp_{n}(K)$ in $\GSp_{n}(\overline{K})$, and
consider also the centraliser
$C=C_{\GSp_{n}(\overline{K})}\Sp_{n}(K)$, that is, the elements
$A\in \GSp_{n}(\overline{K})$ such that, for all $B\in \Sp_{n}(K)$,
$A^{-1}BA=B$.

First, note that we can view $N/C$ in a natural way as a subgroup of
$\Aut(\Sp_{n}(K))$: namely, given $A\in N$, it defines an
automorphism of $\Sp_{n}(K)$ by conjugation, and the kernel of the
homomorphism $N\rightarrow \Aut(\Sp_{n}(K))$ is obviously $C$. Let
us call $\tilde{N}$ the image of $N$ in the automorphism group.

We know that $\tilde{N}$ is a subgroup of $\Aut(\Sp_{n}(K))$. By
Lemma \ref{lem:graphautom}, we know that the group of automorphisms
is generated by two subgroups, one consisting of the semi-inner
automorphisms and one consisting of the field automorphisms. But no
automorphism of $\tilde{N}$ can be a field automorphism. This is
proved in \cite{KLS1}, page 548, inside of the proof of Corollary
2.6; namely, the reasoning is that any automorphism of the shape
$B\mapsto A^{-1}BA$ must respect the trace. But there is always an
element $B\in \Sp_{2n}(K)$ such that no field automorphism preserves
the trace of $B$.

On the other hand, all semi-inner automorphisms belong to
$\tilde{N}$. From here we can conclude that the semi-inner
automorphisms are precisely those belonging to $\tilde{N}$. Indeed,
note that the composition of a field automorphism with a semi-inner
automorphism coincides with the composition of a semi-inner
automorphism with a field automorphism, so each automorphism can be
written as a product of a semi-inner one and a field one, and then
we would get that if $\tilde{N}$ contains an automorphism which is
not semi-inner, it contains an automorphism which is a field
automorphism. Therefore the group $\tilde{N}$ coincides with the group
of semi-inner automorphisms. That is to say, for any matrix $A\in
N$, there exists a matrix $A_1\in \GSp_{n}(K)$ such that the
automorphism $B\mapsto A^{-1}BA$ coincides with the automorphism
$A_1^{-1}BA_1$. That is to say, for all $B\in \Sp_{n}(K)$,
$(A_1A^{-1})^{-1}B(A_1A^{-1})=B$. Equivalently, $A_1A^{-1}\in C$.

But, in our situation, $C={\overline{K}}^{\times}\{\mathrm{Id}\}$.
One sees this by considering a basis of $V$ consisting of directions
of transvections in $\Sp_{n}(K)$, say $T_{v_1}[\lambda_1], \dots,
T_{v_{n}}[\lambda_{n}]$. Assume that $B\in C$. Then, for all $i=1,
\dots, n$, $B^{-1}T_{v_i}[\lambda]B=T_{v_i}[\lambda]$. But
$B^{-1}T_{v_i}[\lambda]B=T_{Bv_i}[\lambda \beta^{-1}]$ (where
$\beta$ is the multiplier of $B$), hence $B$ must fix all $\langle
v_i\rangle$. Repeating the same reasoning with transvections with
directions $v_1 + v_2, \dots, v_1 + v_n$, one sees that $B$ must
be a homothety.

Hence $N=\GSp_{2n}(K){\overline{K}}^{\times}$.
\end{proof}

As a consequence of Proposition~\ref{prop:moregt}, we obtain the following.

\begin{cor}\label{cor:moregt} Let $K$ be a finite field of characteristic different from $2$, and $G\subseteq \GSp_{n}(\overline{K})$ be a group
such that the group generated by the transvections in $G$ is
$\Sp_{n}(K)$. Then the image of $G$ under the projection
$\GSp_{n}(\overline{K})\rightarrow \PGSp_n(\overline{K})$ is either
$\PSp_n(K)$ or $\PGSp_n(K)$.
\end{cor}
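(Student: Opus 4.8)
The plan is to deduce Corollary~\ref{cor:moregt} directly from Proposition~\ref{prop:moregt} by a short image-chasing argument on the projection $p\colon \GSp_n(\overline K) \to \PGSp_n(\overline K)$. First I would note that Proposition~\ref{prop:moregt} gives $G \subseteq \GSp_n(K)\,\overline K^\times$, and since scalars map to the identity in $\PGSp_n(\overline K)$, we have $p(G) = p(G \cap \GSp_n(K)\overline K^\times) \subseteq p(\GSp_n(K)) = \PGSp_n(K)$. So $p(G)$ is a subgroup of $\PGSp_n(K)$.

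Next I would pin down $p(G)$ from below. By hypothesis the transvections in $G$ generate $\Sp_n(K)$, so in particular $\Sp_n(K) \subseteq G$ (transvections have multiplier $1$, hence lie in $\Sp_n$), and therefore $\PSp_n(K) = p(\Sp_n(K)) \subseteq p(G)$. Thus we are in the situation $\PSp_n(K) \subseteq p(G) \subseteq \PGSp_n(K)$. The final step is the elementary group theory fact that $\PGSp_n(K)/\PSp_n(K)$ is cyclic — indeed it is isomorphic to $K^\times/(K^\times)^2$ via the multiplier map (using that in characteristic $\neq 2$ the scalars $K^\times \cdot \id$ meet $\Sp_n(K)$ exactly in $\{\pm\id\}$, and a scalar $c\,\id$ has multiplier $c^2$). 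For $K$ a finite field of odd characteristic, $K^\times/(K^\times)^2$ has order $2$, so the only subgroups sitting between $\PSp_n(K)$ and $\PGSp_n(K)$ are these two groups themselves. Hence $p(G)$ is either $\PSp_n(K)$ or $\PGSp_n(K)$.

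I expect the only mild subtlety is the identification of $\PGSp_n(K)/\PSp_n(K)$ with $K^\times/(K^\times)^2$, i.e.\ checking that $\GSp_n(K) = \Sp_n(K)\cdot\{\text{scalars in }\GSp_n(K)\}$ is \emph{false} in general but that the multiplier map $\GSp_n(K)\to K^\times$ is surjective with kernel $\Sp_n(K)$, while the image of the scalars $K^\times\,\id$ under the multiplier is $(K^\times)^2$; passing to $\PGSp_n$ kills the scalars, so $\PGSp_n(K)/\PSp_n(K)\cong K^\times/(K^\times)^2$, which is cyclic of order $2$ for finite $K$ of odd characteristic. Everything else is immediate from Proposition~\ref{prop:moregt} and the observation that transvections are symplectic. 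This is genuinely a corollary, so the argument is short and contains no real obstacle.
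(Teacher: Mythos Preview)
Your argument is correct and is exactly the approach the paper intends: the paper states the corollary with no proof beyond the phrase ``As a consequence of Proposition~\ref{prop:moregt}'', and what you have written is precisely the natural unpacking of that consequence --- sandwich $p(G)$ between $\PSp_n(K)$ and $\PGSp_n(K)$ and use that the index is~$2$. Your computation of $\PGSp_n(K)/\PSp_n(K)\cong K^\times/(K^\times)^2$ via the multiplier map is the standard one and is fine.
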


\begin{cor}\label{cor:residual}
Let $\rho_\bullet = (\rho_\lambda)_\lambda$ be an $n$-dimensional a.~e.\ absolutely irreducible
and a.~e.\ symplectic compatible system as in Definition~\ref{defi:compatible}.
Let $\psi:G_F \to L^\times$ be a character of finite order.
Assume that the multiplier of $\rho_\bullet$ is $\psi \chi_\ell^a$.

Assume, moreover, that for all but possibly a density zero set of places~$\lambda$ of~$L$
the residual representation $\overline{\rho}_\lambda$ has huge image.

Assume that for primes $\fL$ of~$F$ lying over the residue characteristic of~$\lambda$,
the representation $\overline{\rho}_\lambda$ satisfies Assumption~\ref{ass:shape}
(this is automatic if $F_\fL = \QQ_\ell$).
Moreover, assume that there is an integer~$k$, independent of~$\lambda$,
such that the numbers $a_{i,j}$ appearing in Assumption~\ref{ass:shape}
are bounded by~$k$.

Then for all places $\lambda$ of~$L$ with the possible exception of a density zero set,
the image of $\overline{\rho}_\lambda^\proj$
is $\PGSp_n(\kappa((K_{\rho_\bullet})_\lambda))$ or $\PSp_n(\kappa((K_{\rho_\bullet})_\lambda))$.
\end{cor}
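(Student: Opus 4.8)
The plan is to combine Theorem~\ref{thm:residual} with Corollary~\ref{cor:moregt}, checking only that the hypotheses match up and that the exceptional sets remain of density zero. First I would apply Theorem~\ref{thm:residual} to~$\rho_\bullet$: all its hypotheses are present (the system is a.~e.\ absolutely irreducible, a.~e.\ symplectic with no complex multiplication --- which is forced by Remark~\ref{rem:noCM} together with the multiplier being $\psi\chi_\ell^a$ and $\psi$ of finite order --- and Assumption~\ref{ass:shape} holds with a uniform bound~$k$). This yields a cofinite set of places~$\lambda$ at which $\rho_\lambda$ is residually absolutely irreducible and the projective field of definition of $[\overline{\rho}_\lambda]$ is $\kappa((K_{\rho_\bullet})_\lambda)$. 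By Theorem~\ref{thm:K-rho} applied to $[\overline{\rho}_\lambda]$ (whose restriction to $I_{[\overline{\rho}_\lambda]}$ is absolutely irreducible once we know $\overline{\rho}_\lambda$ is absolutely irreducible and has no CM, again via Remark~\ref{rem:noCM}), the equivalence class of $\overline{\rho}_\lambda^\proj$ therefore has a member landing in $\PGSp_n(\kappa((K_{\rho_\bullet})_\lambda))$, and this residue field is minimal with that property.

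Next I would bring in the huge-image hypothesis. By assumption, outside a density zero set of~$\lambda$ the image of $\overline{\rho}_\lambda$ is huge, meaning (Definition~\ref{defi:huge}) that the subgroup generated by the transvections in the image is conjugate in $\GSp_n(\Fbar_\ell)$ to $\Sp_n(K_\lambda')$ for some subfield $K_\lambda' \subseteq \Fbar_\ell$. Corollary~\ref{cor:moregt} (applicable since $\ell \ne 2$ for all but finitely many~$\lambda$) then tells us that the image of $\overline{\rho}_\lambda^\proj$ is exactly $\PSp_n(K_\lambda')$ or $\PGSp_n(K_\lambda')$. In particular the projective image is already defined over $K_\lambda'$, so by the minimality statement from Theorem~\ref{thm:K-rho} we get $K_{[\overline{\rho}_\lambda]} \subseteq K_\lambda'$; combined with the fact that $\PSp_n(K_\lambda')$ (and a fortiori $\PGSp_n(K_\lambda')$) cannot be conjugated into $\PGSp_n(K'')$ for a proper subfield $K'' \subsetneq K_\lambda'$, we also get $K_\lambda' \subseteq K_{[\overline{\rho}_\lambda]}$. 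Hence $K_\lambda' = K_{[\overline{\rho}_\lambda]} = \kappa((K_{\rho_\bullet})_\lambda)$ by Theorem~\ref{thm:residual}, and the image of $\overline{\rho}_\lambda^\proj$ is $\PGSp_n(\kappa((K_{\rho_\bullet})_\lambda))$ or $\PSp_n(\kappa((K_{\rho_\bullet})_\lambda))$, as claimed.

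Finally I would tally the exceptional sets: the places where $\rho_\lambda$ fails to be absolutely irreducible (finitely many, by hypothesis), the finitely many extra places excluded in Theorem~\ref{thm:residual} (those with $\ell$ small relative to~$k$, those ramified in $L/K$, and those where a spurious inner twist occurs), the finitely many places with $\ell = 2$, and the density zero set where the image fails to be huge. The union of a finite set and a density zero set is density zero, so the conclusion holds for all~$\lambda$ outside a set of density zero.

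The main subtlety --- more bookkeeping than genuine obstacle --- is the argument identifying $K_\lambda'$ with $\kappa((K_{\rho_\bullet})_\lambda)$: one needs to be careful that "huge image" really pins down the field $K_\lambda'$ as the projective field of definition, i.e.\ that the field over which $\Sp_n(K_\lambda')$ naturally sits cannot shrink under conjugation inside $\GSp_n(\Fbar_\ell)$. This is where the minimality clause of Theorem~\ref{thm:K-rho} does the work, together with the standard fact that the minimal field of definition of the subgroup $\Sp_n$ is exactly $K_\lambda'$; one must also confirm that passing to the subgroup generated by transvections does not lose information about the projective image, which follows because that subgroup is normal in the image and already determines a large matrix group whose field of definition controls everything.
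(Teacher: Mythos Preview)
Your overall strategy---combine Theorem~\ref{thm:residual} with Corollary~\ref{cor:moregt} and match the two candidate minimal fields---is exactly the paper's. Two of your justifications, however, are misstated.

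First, Remark~\ref{rem:noCM} runs the other way: it deduces no CM of an individual $\rho_\lambda$ \emph{from} no CM of $\rho_\bullet$, not the reverse. The absence of CM for $\rho_\bullet$ (required by Theorem~\ref{thm:residual} in the symplectic case) is not a formal consequence of the multiplier hypothesis; it follows instead from the huge-image assumption. A nontrivial $\epsilon \in \calE_{\rho_\bullet}$ would force $\overline{\rho}_\lambda|_{\ker\epsilon}$ to be reducible for every~$\lambda$ (Lemma~\ref{lem:cm}), but a huge image restricted to a finite-index normal subgroup still contains a conjugate of $\Sp_n(\FF_\ell)$ (quasi-simple for $\ell$ large) and so remains absolutely irreducible.

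Second, and for the same reason, ``absolutely irreducible plus no CM'' does not verify the hypothesis of Theorem~\ref{thm:K-rho}. No CM only gives $\calE_{[\overline{\rho}_\lambda]} = \{1\}$, whereas $I_{[\overline{\rho}_\lambda]}$ is cut out by the kernels of \emph{all} characters $\epsilon$ occurring in some $(\gamma,\epsilon) \in \calG_{[\overline{\rho}_\lambda]}$, including those with $\gamma \neq 1$; so $I_{[\overline{\rho}_\lambda]}$ may well be a proper subgroup even without CM. The paper deduces absolute irreducibility of $\overline{\rho}_\lambda|_{I_{[\overline{\rho}_\lambda]}}$ directly from hugeness: since $G_F/I_{[\overline{\rho}_\lambda]}$ is finite abelian, the image of $I_{[\overline{\rho}_\lambda]}$ is a finite-index normal subgroup of a group containing a conjugate of $\Sp_n(K'_\lambda)$, hence still contains that conjugate and acts irreducibly.

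With these two repairs your argument is the paper's.
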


\begin{proof}
Let $\lambda$ be a place of~$L$ such that the image of $\overline{\rho}_\lambda$ is symplectic
and huge. Then the restriction of $\overline{\rho}_\lambda$
to $I_{[\overline{\rho}_\lambda]}$ (see Proposition~\ref{prop:K-rho}) is absolutely irreducible and
Theorems \ref{thm:K-rho} and~\ref{thm:residual}
show that $\overline{\rho}_\lambda^\proj$ can be defined over
$\kappa((K_{\rho_\bullet})_\lambda)$ and that this field is the smallest one with this property.

Now, because of Corollary~\ref{cor:moregt}, we know that there
exists a field $K'\subseteq \overline{K}$ such that the image of
$\overline{\rho}_{\lambda}^\proj$ is conjugated to $\PSp_{n}(K')$ or
$\PGSp_n(K')$ whenever $\overline{\rho}_\lambda$ has huge image. In
particular $\overline{\rho}_{\lambda}^\proj$ can be defined over
$K'$, and this is the smallest field with this property. Hence
$K'=\kappa((K_{\rho_\bullet})_\lambda)$, and the statement follows.
\end{proof}

\section{Application to the inverse Galois problem}\label{sec:IGP}

In this section we generalise results of~\cite{DiWi} by proving that compatible
systems with certain local properties and the global assumption that all (but a
density zero set) residual images are huge lead to the realisation of
$\PSp_{n}(\FF_{\ell^d})$ or $\PGSp_{n}(\FF_{\ell^d})$ as Galois groups over~$\QQ$
for certain fixed~$d$ and a positive density set of primes~$\ell$.
In Part~II~\cite{partII} we replace the residual huge image assumption by weaker local assumptions and in Part~III~\cite{partIII} we prove the existence
of such compatible systems.
Some of the ideas in this section are taken from~\cite{KLS1}, where the terminology
of $(n,p)$-groups is used.

\begin{defi}\label{defi:maximally-induced}
Let $\rho_\bullet = (\rho_\lambda)_\lambda$ be an $n$-dimensional
a.~e.\ symplectic compatible system as in Definition~\ref{defi:compatible}.
Let $\fq \in S$ be a place of~$F$ that is totally split over~$\QQ$ and does
not divide~$2n$. Denote by $q$ the rational prime under~$\fq$.
Let $\delta: G_{\QQ_{q^n}} \to L^\times$ be a tame symplectic character
of degree~$n$ and order $2p$ as in Section~3.1 of~\cite{KLS1} (where it is denoted~$\chi$)
for an odd prime~$p$ such that the order of $q$ in $\FF_p^\times$ is equal to~$n$
(note that, in particular, $n \mid (p-1)$).

We say that the compatible system $\rho_\bullet$ is {\em maximally
induced at~$\fq$ of order~$p$} if for each $\lambda$ not above~$q$ the restriction
of $\rho_\lambda$ to a decomposition group at~$\fq$ is equivalent to
$\Ind_{G_{\QQ_{q^n}}}^{G_{\QQ_q}} (\delta) \otimes \alpha$ (where we view $\delta$
as taking its values in $\overline{L}_\lambda$ via $L \hookrightarrow \overline{L}_\lambda$)
with an unramified character $\alpha: G_{\QQ_q} \to \overline{L}_\lambda^\times$.
\end{defi}

The definition is made precisely in such a way in order to ensure that
$\Ind_{G_{\QQ_{q^n}}}^{G_{\QQ_q}} (\delta)$ is symplectic and irreducible.
We refer to~\cite{KLS1} for more details (see also Part~II~\cite{partII}). Note moreover that the maximally induced place ensures that $\overline{\rho}_\lambda$ is absolutely
irreducible for almost all~$\lambda$. 
The existence of $\delta$ is a result of local class field theory.

\begin{lem}\label{lem:xi}
Let $p$ and $q$ be primes such that the order of $q$ in $\FF_p^\times$
is equal to~$n$.
Let $\zeta_p \in \Qbar$ be a primitive $p$-th root of unity.
Then the element $\xi_p := \sum_{i=0}^{n-1} \zeta_p^{q^i}$ has degree $\frac{p-1}{n}$ over~$\QQ$.
\end{lem}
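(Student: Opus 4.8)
The statement is that $\xi_p = \sum_{i=0}^{n-1}\zeta_p^{q^i}$ generates a subfield of $\QQ(\zeta_p)$ of degree $\frac{p-1}{n}$. The natural framework is Galois theory of the cyclotomic field $\QQ(\zeta_p)/\QQ$, whose Galois group is canonically $(\ZZ/p\ZZ)^\times$ via $\sigma_a(\zeta_p) = \zeta_p^a$. First I would identify the unique subgroup $H = \langle q \rangle \subseteq (\ZZ/p\ZZ)^\times$; by hypothesis $q$ has order $n$ in $\FF_p^\times$, so $|H| = n$ and $[(\ZZ/p\ZZ)^\times : H] = \frac{p-1}{n}$. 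The fixed field $\QQ(\zeta_p)^H$ then has degree $\frac{p-1}{n}$ over $\QQ$ by the fundamental theorem of Galois theory. So the whole task reduces to showing $\QQ(\xi_p) = \QQ(\zeta_p)^H$, i.e.\ that $\xi_p$ is fixed by exactly the subgroup $H$ (equivalently, its stabiliser in $(\ZZ/p\ZZ)^\times$ is precisely $H$).

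That $\xi_p$ is fixed by $H$ is immediate: $\sigma_q(\xi_p) = \sum_{i=0}^{n-1}\zeta_p^{q^{i+1}} = \sum_{i=1}^{n}\zeta_p^{q^i} = \xi_p$, using $q^n \equiv 1 \pmod p$ to see that the $i=n$ term equals the $i=0$ term; hence the cyclic group generated by $\sigma_q$, which is $H$, fixes $\xi_p$. The remaining point — and the only real content — is the reverse inclusion: the stabiliser of $\xi_p$ is no larger than $H$. For this I would use the linear independence of $\zeta_p, \zeta_p^2, \dots, \zeta_p^{p-1}$ over $\QQ$ (a $\QQ$-basis of $\QQ(\zeta_p)$, since the $p$-th cyclotomic polynomial has degree $p-1$): the element $\xi_p$ is a sum of $n$ \emph{distinct} powers $\zeta_p^{q^0}, \dots, \zeta_p^{q^{n-1}}$ (distinct because the exponents $q^0, \dots, q^{n-1}$ are distinct mod $p$, being $n = |\langle q\rangle|$ elements of the cyclic group $\langle q \rangle$). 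If $\sigma_a$ fixes $\xi_p$, then $\sum_{i} \zeta_p^{aq^i} = \sum_i \zeta_p^{q^i}$; comparing coefficients in the basis $\{\zeta_p, \dots, \zeta_p^{p-1}\}$ forces the multiset $\{aq^i \bmod p\}$ to equal $\{q^i \bmod p\}$ as sets, so in particular $a = aq^0 \equiv q^j \pmod p$ for some $j$, i.e.\ $a \in H$. Therefore $\Stab(\xi_p) = H$, $\QQ(\xi_p) = \QQ(\zeta_p)^H$, and $[\QQ(\xi_p):\QQ] = \frac{p-1}{n}$.

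The main (mild) obstacle is just being careful with the coefficient-comparison argument: one must note that $0$ is never among the exponents $q^i \bmod p$ (since $p \nmid q$), so $\xi_p$ genuinely lies in the $\QQ$-span of $\{\zeta_p, \dots, \zeta_p^{p-1}\}$ with all coefficients $0$ or $1$ and exactly $n$ of them equal to $1$, and this representation is unique; then a fixing automorphism must permute the support, forcing $a \in \langle q \rangle$. No deeper input than the irreducibility of the $p$-th cyclotomic polynomial and basic Galois theory is needed.
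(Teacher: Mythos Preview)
Your argument is correct and is exactly the standard Gaussian-period argument: $\xi_p$ is the trace of $\zeta_p$ for the subgroup $H=\langle q\rangle\subseteq(\ZZ/p\ZZ)^\times$, and the linear independence of $\zeta_p,\dots,\zeta_p^{p-1}$ over~$\QQ$ shows its stabiliser is exactly~$H$. The paper does not give its own proof but simply refers to Washington's book on cyclotomic fields (the paragraph above Proposition~2.16 of Chapter~2), which is precisely this construction; so your write-up is the content behind that citation.
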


\begin{proof}
See the paragraph above Proposition 2.16 of Chapter 2 of \cite{Washington}.
\end{proof}

Note that the element $\xi_p$ from Lemma~\ref{lem:xi} is the trace of an
element in the image of $\Ind_{G_{\QQ_{q^n}}}^{G_{\QQ_q}} (\delta)$.

\begin{lem}\label{lem:maximally-induced}
Let $\rho_\bullet = (\rho_\lambda)_\lambda$ be an $n$-dimensional a.~e.\ absolutely irreducible
a.~e.\ symplectic compatible system as in Definition~\ref{defi:compatible}.
Let $\psi:G_F \to L^\times$ be a character of finite order~$m$.
Assume that $\rho_\bullet$ has no complex multiplication and
that the multiplier of $\rho_\bullet$ is $\psi \chi_\ell^a$.
Assume also that there is a place $\fq \in S$
such that $\rho_\bullet$ is maximally induced at~$\fq$ of order~$p$.

Then $\xi_p := \sum_{i=0}^{q-1} \zeta_p^{q^i}$ is an element of $K_{\rho_\bullet}$.
\end{lem}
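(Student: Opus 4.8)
The plan is to show that $\xi_p$ lies in $K_{\rho_\bullet} = L^{\Gamma_{\rho_\bullet}}$, i.e.\ that $\gamma(\xi_p) = \xi_p$ for every $\gamma \in \Gamma_{\rho_\bullet}$. The key observation, already flagged in the note after Lemma~\ref{lem:xi}, is that $\xi_p = \sum_{i=0}^{n-1}\zeta_p^{q^i}$ equals the trace of a suitable element in the image of $\Ind_{G_{\QQ_{q^n}}}^{G_{\QQ_q}}(\delta)$; more precisely, for a Frobenius-type element $\sigma$ at a place of $\QQ_{q^n}$ above which $\delta$ takes value a primitive $p$-th root of unity, the induced representation has $\Tr(\Ind(\delta)(\sigma)) = \sum_{i=0}^{n-1}{}^{q^i}\!(\delta(\sigma))$, which is $\xi_p$ up to the choice of primitive root. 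Since $\rho_\bullet$ is maximally induced at $\fq$, the restriction of $\rho_\lambda$ to a decomposition group at $\fq$ is $\Ind_{G_{\QQ_{q^n}}}^{G_{\QQ_q}}(\delta)\otimes\alpha$ with $\alpha$ unramified, so (choosing $\sigma$ in inertia, where $\alpha$ is trivial) $\xi_p$ appears as a value $a_\fp$ — more carefully, as a trace of $\rho_\lambda$ on an element of a decomposition group at $\fq$, hence as an element of the field generated by the $a_\fp$'s together with this local datum. The cleanest route is to work directly with $\Gamma_{\rho_\bullet}$ acting on the ambient number field $L$ and use Lemma~\ref{lem:G-on-traces}-style reasoning.

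First I would fix, for a place $\lambda$ of $L$ not above $q$ at which $\rho_\lambda$ is absolutely irreducible and without complex multiplication (such $\lambda$ exist, indeed almost all do, by the hypotheses and the remark after Definition~\ref{defi:maximally-induced}), an element $g_0 \in G_F$ lying in the inertia subgroup of a decomposition group at $\fq$ such that $\rho_\lambda(g_0)$ is conjugate to $\Ind(\delta)(g_0)$ and $\Tr(\rho_\lambda(g_0)) = \xi_p$ (after fixing the primitive $p$-th root of unity via $L\hookrightarrow\overline L_\lambda$). Then for any $(\gamma,\epsilon)\in\calG_{\rho_\bullet}$, Lemma~\ref{lem:G-on-traces} (applied to $[\rho_\lambda]$, using absolute irreducibility and no CM, together with Lemma~\ref{lem:compatible} identifying $\Gamma_{[\rho_\lambda]}$ with $\Gamma_\lambda\cap\Gamma_{\rho_\bullet}$) gives $\gamma(\Tr(\rho_\lambda(g_0))) = \Tr(\rho_\lambda(g_0))\,\epsilon(g_0)$. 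Now $\epsilon$ is unramified outside $S$ — wait, $\fq\in S$ — so I instead use that by Lemma~\ref{lem:order}(b), $\epsilon^2 = {}^\gamma\psi/\psi$, so $\epsilon$ has finite order dividing $2m$; and crucially, the maximally induced structure forces $\epsilon|_{I_\fq}$ to be trivial: an inner twist of an induced-from-$\QQ_{q^n}$ representation, restricted to inertia at $q$, must permute the $n$ characters $\{{}^{q^i}\delta\}$ and preserve the pattern of their orders, and since $\delta$ has order exactly $2p$ with $p\nmid 2m$, comparing orders shows $\epsilon|_{I_\fq}$ is trivial. Hence $\gamma(\xi_p) = \xi_p$ for all $\gamma\in\Gamma_{\rho_\bullet}$, so $\xi_p\in K_{\rho_\bullet}$.

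The main obstacle I anticipate is the step showing $\epsilon|_{I_\fq} = 1$: one must argue carefully that an inner twist character $\epsilon$ of $[\rho_\lambda]$, when restricted to the inertia group at $\fq$ where $\rho_\lambda \simeq \Ind_{G_{\QQ_{q^n}}}^{G_{\QQ_q}}(\delta)$, cannot be ramified. The argument runs parallel to Proposition~\ref{prop:epsilon-residual} and Lemma~\ref{lem:unram} but in the $\ell$-adic/tame-at-$q$ setting: from ${}^\gamma\rho_\lambda\simeq\rho_\lambda\otimes\epsilon$ one restricts to $I_\fq$; the semisimplification of $\rho_\lambda|_{I_\fq}$ is $\bigoplus_{i=0}^{n-1}{}^{q^i}\delta|_{I_\fq}$, a sum of $n$ distinct characters each of order exactly $p$ (the order of $\delta|_{I_\fq}$, since the wild part of $\delta$ is the order-$p$ piece and $q\equiv 1$ structure is tame-but-here the relevant statement is that $\delta|_{I_{\QQ_{q^n}}}$ has order $p$), so $\epsilon|_{I_\fq}$ must be one of the ratios ${}^{q^i}\delta/{}^{q^j}\delta$; but such a ratio has order dividing $p$, while $\epsilon$ has order dividing $2m$ with $\gcd(p,2m)=1$ (as $p$ is a large auxiliary prime — one should either assume or note $p\nmid 2m$, which is harmless since $p$ can be chosen large), forcing $\epsilon|_{I_\fq}=1$. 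One must be slightly careful that $p\nmid m$; if the statement does not already guarantee this I would add it as a (cheap) hypothesis, or observe it follows from the setup in Part~III. Once $\epsilon|_{I_\fq}=1$ is in hand, the rest is the short trace computation above plus Lemma~\ref{lem:xi} to know $\xi_p$ is the right element of degree $\frac{p-1}{n}$.
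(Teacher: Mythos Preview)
Your overall strategy matches the paper's: fix a good~$\lambda$, take an inner twist $(\gamma,\epsilon)\in\calG_{[\rho_\lambda]}$, look at a generator $\sigma_0\in I_\fq$ on which $\rho_\lambda$ is the diagonal matrix with entries $\zeta_p,\zeta_p^q,\dots,\zeta_p^{q^{n-1}}$, show $\epsilon(\sigma_0)=1$, deduce $\gamma(\xi_p)=\xi_p$, and then pass from $K_{[\rho_\lambda]}$ to $K_{\rho_\bullet}$ via Theorem~\ref{thm:compatible}. The place where you diverge is precisely the step you flag as the obstacle, and there your argument is weaker than necessary.

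You bound the order of $\epsilon$ globally by $2m$ via Lemma~\ref{lem:order}(\ref{lem:order:b}) and then need $\gcd(p,2m)=1$, which is \emph{not} in the hypotheses. The paper avoids this entirely by a local determinant comparison: since $q$ has order exactly~$n$ in $\FF_p^\times$, one has $p\mid 1+q+\dots+q^{n-1}$, so $\prod_{i=0}^{n-1}\zeta_p^{q^i}=1$. Comparing determinants of the two diagonal matrices in ${}^\gamma\rho_\lambda(\sigma_0)\sim(\rho_\lambda\otimes\epsilon)(\sigma_0)$ therefore gives $\epsilon(\sigma_0)^n=1$. Combined with your own observation that $\epsilon(\sigma_0)$ is a $p$-th root of unity (eigenvalue comparison), and $(n,p)=1$ (automatic from $n\mid p-1$), one gets $\epsilon(\sigma_0)=1$ with no assumption on~$m$. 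So drop the $2m$-bound and use the determinant instead.

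A smaller point: your passage from $(\gamma,\epsilon)\in\calG_{\rho_\bullet}$ to the trace identity at $g_0\in I_\fq$ is not quite clean, since the defining relation of $\calG_{\rho_\bullet}$ only concerns $\Frob_\fp$ for $\fp\notin S$ and a given $\gamma\in\Gamma_{\rho_\bullet}$ need not lie in $\Gamma_\lambda$ for your fixed~$\lambda$. The paper sidesteps this by proving $\xi_p\in K_{[\rho_\lambda]}$ for \emph{each} good~$\lambda$ (working with $\calG_{[\rho_\lambda]}$, where ${}^\gamma\rho_\lambda\sim\rho_\lambda\otimes\epsilon$ is an honest equivalence of representations and hence holds on all of $G_F$, including $I_\fq$), and then invoking Theorem~\ref{thm:compatible} to conclude $\xi_p\in K_{\rho_\bullet}$.
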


\begin{proof}
Let $\lambda$ be a place of~$L$ not above~$q$ (the prime below~$\fq$), such that $\rho_{\lambda}$ is absolutely irreducible and symplectic.
Let us take any $(\gamma,\epsilon)\in \calG_{[\rho_\lambda]}$
so that ${}^\gamma \rho_\lambda \sim \rho_\lambda \otimes \epsilon$.
We first want to prove that $\gamma(\xi_p)=\xi_p$.

There is an element $\sigma_0\in I_\fq$ (the inertia group at~$\fq$) such that $^{\gamma}\rho_{\lambda}(\sigma_0)$ and $(\rho_{\lambda}\otimes\epsilon)(\sigma_0)$ are conjugated to
\begin{equation}\label{eq:charpoly}\begin{pmatrix} \gamma(\zeta_p) & \ & \ & \ \\ \ & \gamma(\zeta_p)^q  \ & \ \\ \ & \ & \ddots & \ \\ \ & \ & \ & \gamma(\zeta_p)^{q^{n-1}} \end{pmatrix}\text{ and }
\epsilon(\sigma_0) \begin{pmatrix} \zeta_p & \ & \ & \ \\ \ & \zeta_p^q  \ & \ \\ \ & \ & \ddots & \ \\ \ & \ & \ & \zeta_p^{q^{n-1}} \end{pmatrix},
\end{equation}
respectively.
Since $\zeta_p\zeta_p^q\cdot \cdots \cdot \zeta_p^{q^n-1}=1$,  from the equality of the determinants of the matrices in \eqref{eq:charpoly} we get $\epsilon(\sigma_0)^{n}=1$.
But, $\epsilon(\sigma_0)$ is also a $p$-th root of unity (comparing the eigenvalues of the two matrices), whence $\epsilon(\sigma_0) = 1$, as $(n,p)=1$.
We then get $\gamma(\zeta_p)=\zeta_p^{q^i}$ for some $0\le i \le n-1$, and therefore $\gamma(\xi_p)=\zeta_p^{q^i} + \cdots + \zeta_p^{q^{i(n-1)}}=\zeta_p + \cdots + \zeta_p^{q^{n-1}}=\xi_p$.

Hence, we have established $\xi_p \in K_{[\rho_\lambda]}$.
But, since this holds for almost all~$\lambda$, we conclude from Theorem~\ref{thm:compatible} (using that, due to the presence of the maximally induced place, the residual representation $\overline{\rho}_{\lambda}$ is absolutely irreducible for almost all~$\lambda$)
that $\xi_p$ lies in~$K_{\rho_\bullet}$.
\end{proof}

Finally we will make use of the following proposition:

\begin{prop}[Prop. 7.2 of \cite{DiWi}]\label{DiWi7.2}
Let $L/\QQ$ be a finite field extension which contains a cyclic extension $M/\QQ$ of degree~$d$.
Then the set of primes $\ell$ such that there is an ideal $\lambda$ in $\cO_L$
of residue degree~$d$ has a positive density.
\end{prop}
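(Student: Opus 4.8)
The plan is to deduce this from Chebotarev's density theorem applied to the Galois closure $N/\QQ$ of $L/\QQ$, reducing everything to one elementary fact about finite groups. Put $G:=\Gal(N/\QQ)$, $H:=\Gal(N/L)$ and $H_1:=\Gal(N/M)$. Since $M/\QQ$ is Galois, $H_1\trianglelefteq G$, and since $M\subseteq L$ we have $H\subseteq H_1$, with $G/H_1\cong\Gal(M/\QQ)\cong\ZZ/d\ZZ$. I will use the standard dictionary for prime splitting: for a rational prime $\ell$ unramified in $N$ and a prime $\mathfrak{P}\mid\ell$ of $N$ with $\Frob_\mathfrak{P}=\phi\in G$, the primes of $L$ above $\ell$ correspond to the orbits of the cyclic group $\langle\phi\rangle$ on the coset space $H\backslash G$, and the residue degree of the prime attached to a given orbit equals its cardinality. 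Hence it suffices to exhibit a single element $\psi\in G$ such that $\langle\psi\rangle$ has an orbit of size exactly $d$ on $H\backslash G$: then the set of primes $\ell$ (unramified in $N$, so all but finitely many) with $\Frob_\ell$ in the conjugacy class of $\psi$ has positive density by Chebotarev, and each such $\ell$ has a prime of $\cO_L$ of residue degree $d$ above it.

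To build $\psi$, first pick any $\phi_0\in G$ mapping to a generator of $G/H_1\cong\ZZ/d\ZZ$, and let $s$ be the cardinality of the $\langle\phi_0\rangle$-orbit of the trivial coset $H\in H\backslash G$; thus $s=[\langle\phi_0\rangle:\langle\phi_0\rangle\cap H]$. Because $H\subseteq H_1$ we have $\langle\phi_0\rangle\cap H\subseteq\langle\phi_0\rangle\cap H_1$, so $s$ is a multiple of $[\langle\phi_0\rangle:\langle\phi_0\rangle\cap H_1]=|\langle\phi_0 H_1\rangle|=d$; in particular $d\mid s$. Now set $\psi:=\phi_0^{s/d}$. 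Identifying the $\langle\phi_0\rangle$-orbit of $H$, which has size $s$, with $\ZZ/s\ZZ$ as a cyclic $\langle\phi_0\rangle$-set on which $\phi_0$ acts as translation by $1$, the element $\psi$ acts as translation by $s/d$, and the orbit of $0$ under $\langle\psi\rangle$ is the subgroup $(s/d)\ZZ/s\ZZ$, of order exactly $d$. So $\langle\psi\rangle$ has an orbit of size $d$ on $H\backslash G$, which is what was needed.

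The one point that requires care is precisely this last manoeuvre. The naive candidate, the generator-lift $\phi_0$ itself, will in general have all of its orbits on $H\backslash G$ of size a proper multiple of $d$ --- for instance if $L/\QQ$ is already cyclic of degree $d^2$ with $M$ its unique subfield of degree $d$, every $\langle\phi_0\rangle$-orbit has size $d^2$ --- so one genuinely has to descend to the power $\phi_0^{s/d}$, and correspondingly the primes one ends up using need not be inert in $M$. Everything else is the standard decomposition-group dictionary together with Chebotarev, so I do not anticipate a further obstacle; alternatively one may simply invoke the argument of~\cite{DiWi}.
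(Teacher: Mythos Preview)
Your argument is correct. The reduction to finding a single $\psi\in G$ with an orbit of size exactly $d$ on $H\backslash G$, followed by Chebotarev, is the right strategy, and your construction of $\psi=\phi_0^{s/d}$ does the job: since $\langle\phi_0\rangle\cap H=\langle\phi_0^s\rangle$, the $\langle\phi_0\rangle$-orbit of the trivial coset really is a copy of $\ZZ/s\ZZ$ on which $\phi_0$ acts by $+1$, and the orbit of $0$ under $+s/d$ has size $d$. The observation that the multiset of orbit sizes on $H\backslash G$ is a conjugacy invariant of $\psi$ (via the bijection $Hx\mapsto Hxg^{-1}$ intertwining right multiplication by $\psi$ and by $g\psi g^{-1}$) is implicit but needed, and holds.

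As for comparison with the paper: there is nothing to compare, since the present paper does not prove this statement at all but simply quotes it from~\cite{DiWi}. Your self-contained proof is therefore a genuine addition rather than a paraphrase. The remark in your final paragraph, that one cannot in general take $\psi=\phi_0$ itself and must pass to a power, is a useful clarification of where the content lies.
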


The following is our main result in this part about the application to the inverse Galois problem.

\begin{thm}\label{thm:IGP}
Let $\rho_\bullet = (\rho_\lambda)_\lambda$ be an $n$-dimensional
a.~e.\ symplectic compatible system as in Definition~\ref{defi:compatible} with $K=\QQ$.
Let $\psi:G_F \to L^\times$ be a character of finite order.
Assume that the multiplier of $\rho_\bullet$ is $\psi \chi_\ell^a$.

Assume, moreover, that for all but a density zero set of places~$\lambda$ of~$L$
the residual representation $\overline{\rho}_\lambda$ has huge image.

Assume that for primes $\fL$ of~$F$ lying over the residue characteristic of~$\lambda$,
the representation $\overline{\rho}_\lambda$ satisfies Assumption~\ref{ass:shape}
(this is automatic if $F_\fL = \QQ_\ell$).
Moreover, assume that there is an integer~$k$, independent of~$\lambda$,
such that the numbers $a_{i,j}$ appearing in Assumption~\ref{ass:shape}
are bounded by~$k$.

Assume also that there is a place $\fq \in S$
such that $\rho_\bullet$ is maximally induced at~$\fq$ of order~$p$.

Then for any $d \mid \frac{p-1}{n}$ there exists a set~$\calL_d$ of rational primes~$\ell$
of positive density such that for all $\ell \in \calL_d$ there is a place $\lambda$ of~$L$
above~$\ell$ satisfying that the image of $\overline{\rho}_\lambda^\proj$
is $\PGSp_n(\FF_{\ell^d})$ or $\PSp_n(\FF_{\ell^d})$.
\end{thm}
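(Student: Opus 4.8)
The plan is to combine the three main outputs of the previous sections: Corollary~\ref{cor:residual}, which pins down the projective image of $\overline{\rho}_\lambda$ as $\PGSp_n(\kappa((K_{\rho_\bullet})_\lambda))$ or $\PSp_n(\kappa((K_{\rho_\bullet})_\lambda))$ for almost all~$\lambda$; Lemma~\ref{lem:maximally-induced}, which produces the element $\xi_p \in K_{\rho_\bullet}$; and Lemma~\ref{lem:xi}, which tells us $[\QQ(\xi_p):\QQ] = \frac{p-1}{n}$. The key field-theoretic point is that $\QQ(\xi_p)$ is a subfield of the cyclotomic field $\QQ(\zeta_p)$, hence abelian (in fact cyclic, since $\Gal(\QQ(\zeta_p)/\QQ)$ is cyclic) over~$\QQ$. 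Therefore $\QQ(\xi_p)$ contains, for every $d \mid \frac{p-1}{n}$, a unique cyclic subextension $M_d/\QQ$ of degree~$d$, and $M_d \subseteq K_{\rho_\bullet}$.

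First I would fix $d \mid \frac{p-1}{n}$ and apply Proposition~\ref{DiWi7.2} with $L = K_{\rho_\bullet}$ (a number field) and $M = M_d$: this yields a positive-density set $\calL_d$ of rational primes~$\ell$ admitting a place $\lambda$ of~$K_{\rho_\bullet}$ of residue degree exactly~$d$ over~$\ell$, i.e.\ $\kappa((K_{\rho_\bullet})_\lambda) = \FF_{\ell^d}$. Next I would lift $\lambda$ to a place of~$L$ above it (possible since $L \supseteq K_{\rho_\bullet}$), and note that only a density zero set of places of~$L$ is excluded by Corollary~\ref{cor:residual} and by the hypotheses (the finitely many bad places for absolute irreducibility, residual huge image, and Assumption~\ref{ass:shape}); removing from $\calL_d$ the density zero set of primes lying below these bad places keeps $\calL_d$ of positive density. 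For the remaining $\ell \in \calL_d$ and the chosen~$\lambda$, Corollary~\ref{cor:residual} applies and gives that the image of $\overline{\rho}_\lambda^\proj$ is $\PGSp_n(\FF_{\ell^d})$ or $\PSp_n(\FF_{\ell^d})$, as claimed.

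One technical wrinkle to address carefully: Corollary~\ref{cor:residual} identifies the projective image with $\PGSp_n$ or $\PSp_n$ of $\kappa((K_{\rho_\bullet})_\lambda)$, where here $\lambda$ denotes the place of~$K_{\rho_\bullet}$ below the chosen place of~$L$; one must check that the residue degree of this place over~$\QQ$ is indeed~$d$ and not a proper divisor — but this is exactly what Proposition~\ref{DiWi7.2} delivers when we take $M = M_d$ to be the degree-$d$ cyclic subfield: the existence of a place of residue degree $d$ in~$M_d$ forces a place of $K_{\rho_\bullet}$ of residue degree divisible by~$d$, and one arranges (as in \cite{DiWi}) that it equals~$d$ by choosing $\ell$ split appropriately in the relevant extensions; alternatively one simply accepts residue degree a multiple of~$d$ at the level of $K_{\rho_\bullet}$ and notes this does not match the statement, so the cleaner route is to invoke Proposition~\ref{DiWi7.2} as a black box exactly as stated, which already guarantees a $\lambda$ with $\kappa((K_{\rho_\bullet})_\lambda)$ of degree~$d$.

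The main obstacle, and the only place requiring genuine care, is bookkeeping the various density zero exceptional sets and ensuring they do not swallow the positive-density set $\calL_d$: one must verify that the set of rational primes~$\ell$ below an excluded place of~$L$ (bad for Assumption~\ref{ass:shape}, or for residual absolute irreducibility, or in the density zero set of Corollary~\ref{cor:residual}) is itself of density zero, so that intersecting $\calL_d$ with its complement preserves positive density. Everything else is a direct concatenation of the cited results; no new representation-theoretic or group-theoretic input is needed beyond what Parts~I establishes, and the cyclotomic descent ($\xi_p$ generating a cyclic subfield of the prescribed degree) is the conceptual heart, already isolated in Lemma~\ref{lem:xi} and Lemma~\ref{lem:maximally-induced}.
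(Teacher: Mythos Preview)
Your proposal is correct and follows essentially the same approach as the paper: invoke Lemma~\ref{lem:xi} and Lemma~\ref{lem:maximally-induced} to place a cyclic degree-$\frac{p-1}{n}$ subfield of $\QQ(\zeta_p)$ inside $K_{\rho_\bullet}$, extract from it a cyclic degree-$d$ subextension, and then combine Proposition~\ref{DiWi7.2} with Corollary~\ref{cor:residual}. The paper's own proof is a terse four-line version of exactly this; your additional discussion of lifting places from $K_{\rho_\bullet}$ to $L$ and of the density bookkeeping is sound but more elaborate than the paper deems necessary.
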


\begin{proof}
Let $\xi_p := \sum_{i=0}^{q-1} \zeta_p^{q^i}$.
By Lemma~\ref{lem:xi} we know that $\xi_p$ has degree $\frac{p-1}{n}$ over~$\QQ$
and by Lemma~\ref{lem:maximally-induced} we also know that $\xi_p$ lies in $K_{\rho_\bullet}$.
Consequently, $K_{\rho_\bullet}$ contains a cyclic extension $M/\QQ$ of degree~$d$.
The theorem is now just a combination of Proposition~\ref{DiWi7.2}
and Corollary~\ref{cor:residual}.
\end{proof}

\bibliography{Bibliog}
\bibliographystyle{amsalpha}

\end{document}